\def\R{\mathbb R}
\numberwithin{equation}{section}
\font\teneufm=eufm10
\font\seveneufm=eufm7
\font\fiveeufm=eufm5
\newtheorem{theorem}{Theorem}[section]
\newtheorem{proposition}{Proposition}[section]
\newtheorem{lemma}{Lemma}[section]
\newtheorem{remark}{Remark}[section]
\newtheorem{remarks}{Remark}[section]
\newtheorem{definition}{Definition}[section]
\newcommand{\be}{\begin{equation}}
\newcommand{\ee}{\end{equation}}
\title{On the global well-posedness of the 3D axisymmetric resistive  MHD
equations}
\author{Zineb  Hassainia}
\newcommand{\Addresses}{{
  \bigskip
  \footnotesize

  Z.~Hassainia, \textsc{NYU, Abu Dhabi
Saadiyat Marina District - Abu Dhabi, United Arab Emirates.}\par\nopagebreak
  \textit{E-mail address}: \texttt{zh14@nyu.edu}

}}
\begin{document}

\date{}

\maketitle

\begin{abstract}
In this paper, we prove  the global well-posedness for the  three-dimensional magnetohydrodynamics (MHD) equations with zero viscosity and axisymmetric initial data. First, we analyze  the
problem corresponding to the Sobolev  regularities $ H^s\times H^{s-2}$, with $  s>5/2$. Second, we address the same problem but for the Besov critical regularities $ B_{p,1}^{3/p+1}\times B_{1,p}^{3/p-1}$, $2\leq p\leq \infty$. This case turns out to be more subtle as  the Beale-Kato-Majda criterion is not known to be valid for rough regularities.
\end{abstract}

\tableofcontents

\section{Introduction }
In this paper, we consider  the three-dimensional   incompressible magnetohydrodynamic (MHD) system, describing  the motion of an electrically conducting fluid with a neglected  viscosity and an  important  resistivity,
\begin{equation}
\label{MHD}
\begin{cases}
\partial_t  v +  v \cdot \nabla  v + \nabla p=b\cdot\nabla b, \quad t\in \R_+, \, x\in \R^3,   \\
\partial_t  b +  v \cdot \nabla  b - \Delta b=b\cdot\nabla v,  \\
{\rm div}\,   v = 0\, ,\,  {\rm div}\, b=0,  
\end{cases}
\end{equation}
where $v$ denotes the fluid velocity  and $b$ stands for the magnetic field. The pressure $p$ is a scalar function that can be recovered from the velocity and the magnetic field by inverting an elliptic equation. Note that we have set   the magnetic diffusion coefficient to be one  by a scaling transformation. 
For a general review about the derivation of the MHD equations we refer the reader to \cite{B,Ch,Dav}.  
\vspace{0.2cm}

Since the pioneering work of  Alfv\'en \cite{A}, the MHD equations have played prominent roles  in the study of significant phenomena such as the magnetic reconnection in astrophysics,  geomagnetic dynamo in geophysics and   plasma confinement  in  engineering, see \cite{B,Dav}.
From mathematical point of view, the MHD system and its ideal,  fully dissipative and  partially dissipative counterparts have been intensively investigated in the few last decades and considerable interest has been devoted to   the  local/global well-posedness problem, see for instance \cite{DL,ST, CK,CST,CMZ,CMRR, CW,FMRR,FMRR2,JN,MY,Sc,Se,W} and the references therein.
Note that the global existence and the uniqueness  of such solutions is an open problem except in the two dimensional case with the full dissipation. 
\vspace{0.2cm}

 Hereafter, we shall primarily restrict the discussion to the MHD equations with only  magnetic diffusion \eqref{MHD}. 
 In \cite{K}, Kozono  proved global existence of weak solutions in 2D for  initial data in $L^2$; while in 3D, Fan and Ozawa \cite{FO}  showed that  the solution can be extended beyond time $T$ if $\nabla v\in L^1(0,T;L^\infty(\R^3))$. 
 However, it is not clear if such weak solutions are unique or  if one can extend the the global weak theory, in 2D, to the classical one. 
 This latter problem is  critical in the sense that if we replace the Laplacian term $-\Delta$ by a fractional Laplacian operator $(-\Delta)^\beta$,  $\beta>1$, the resulting system then admits a unique global solution. This fact was proved independently in \cite{CWY,JZ}. 
We stress that  the crucial part, to prove the global well posedness for \eqref{MHD} in 2D,  is to get an a priori estimate for the vorticity $\omega:=\textnormal{curl}( v)$ in $L^\infty$.  The main difficulty can be illustrated through the equations governing the vorticity $\omega:=\textnormal{curl}( v)=\partial_1 v_2-\partial_2 v_1$ and the current density $j:=\textnormal{curl}( b)=\partial_1 b_2-\partial_2 b_1$, 
\begin{equation}
\begin{cases}
\partial_t   \omega+ v \cdot \nabla  \omega= b\cdot\nabla j,\\
\partial_t   j+ v \cdot \nabla  j-\Delta j=b\cdot\nabla j+2\partial_1 b\cdot\nabla v_2-2\partial_2 b\cdot\nabla v_1.
\end{cases}
\end{equation}
According to the first equation in the above system, the vorticity conservation requires a global bound of $\|\nabla j\|_{L\infty}$. However,  the nonlinear structure of the term $\mathcal{Q}( \nabla  v,\nabla  b):=2\partial_1 b\cdot\nabla v_2-2\partial_2 b\cdot\nabla v_1$ and the lack of continuity of Riesz transforms on the bounded functions makes the the problem  highly non trivial. 
For regularity criteria we refer the reader for instance to \cite{CK,LZ,JZ}.
\vspace{0.2cm}

The main scope of this paper is  to construct global unique classical  solutions in 3D under some geometrical constraints. More precisely, we shall see that the axisymmetry  offer a suitable class of initial data for which the construction of classical  solutions is possible. Note that this  has been successfully implemented for the viscous non-resistive  MHD equations,   by Lei in  \cite{Lei} where he proves the existence of  axisymmetric global  classical solutions of the special form
\be\label{spform}
v(t,x) = v^{r}(t,r, z)e_{r} + v^{3}(t,r, z)e_{z},\quad b=b^\theta (t,r, z)e_{\theta} 
\ee
where  $r=(x_{1}^{2}+x_{2^{2}})^{\frac{1}{2}}$,  $(e_{r},e_{\theta},e_{z})$ is the cylindrical basis of $\R^3$.
We stress that the fluid velocity $v$ and the magnetic field $b$   are assumed to be invariant by rotation around the vertical axis. 
\vspace{0.2cm}
  
  Before stating our  contribution in the subject let us briefly discuss what is known for Euler equations ( obtained by setting  $b$  identically zero)  with this special initial data.  
  In \cite{ui68}, Ukhovskii and Yudovich 
proved the global existence for axisymmetric initial data with finite energy and satisfying
in addition $\omega_0\in L^2\cap L^\infty$ and ${\omega_0}/{r}\in L^2\cap L^\infty$. 
This result has been improved by Shirota and Yanagisawa \cite{SY} who proved global existence
in $H^s$ , with $s >5/2$. In  \cite{d07}, Danchin has obtained global existence
and uniqueness for initial data $\omega_0\in L^{3,1}\cap L^\infty$ and ${\omega_0}/{r}\in L^{3,1}$, where $L^{3,1}$  denotes the Lorentz space. The global well-posedness in the critical Besov regularity, that is, $v_0\in B_{p,1}^{3/p+1}$, $p\in [1,\infty]$ with ${\omega_0}/{r}\in L^{3,1}$  was established in \cite{AHK}. We point out that the global well-posedness result has been extended  for 3D   Euler–Boussinesq system, which couples Euler equations with a transport-heat equation governing the density, see \cite{HR,S}.
  
Our main concern here is to validate similar  global well-posedness results for the MHD system \eqref{MHD}. Our first result reads as follows

\begin{theorem}\label{thm:sub-critical}
Let $v_0$  and $b_0$ be two axisymmetric divergence-free vector fields as in \eqref{spform}.
Suppose that $(v_0,b_0)\in H^s\times H^{s-2}$,  with $s >5/2$,  and   ${b_0^\theta}/{r}\in L^2 \cap L^{\infty}$. Then there exists a unique global solution $(v,b)$ to the MHD system \eqref{MHD} such that
$$
(v,b)\in \mathcal{C}\big(\R_+; H^s\big)\times \Big(\mathcal{C}\big(\R_+; H^{s-2}\big)\cap \widetilde{L}^1_{\textnormal{loc}}\big(\R_+; H^s\big)\Big) \quad \textnormal{and}\quad \frac{b}{r}\in L^\infty_{\textnormal{loc}}\big(\R_+; L^2 \cap L^{\infty}\big).
$$

\end{theorem}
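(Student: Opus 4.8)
We would follow the classical scheme of proving global well-posedness for transport-type equations in the axisymmetric setting: local existence, then a global a priori bound via a suitable quantity that controls the Beale–Kato–Majda criterion.

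First I would establish local well-posedness in $H^s \times H^{s-2}$ by a standard energy method (or a fixed-point argument on a suitable sequence of approximate systems), using the fact that $b$ enjoys parabolic smoothing from the $-\Delta b$ term so that it lies in $\widetilde L^1_{\text{loc}}(H^s)$. The axisymmetry of the initial data is propagated by the flow: since both equations in \eqref{MHD} are covariant under rotation about the vertical axis, the solution stays axisymmetric, and — crucially — the special structure \eqref{spform} is preserved, so $b = b^\theta e_\theta$ throughout. The local existence time $T^*$ depends only on the initial norms. The real work is to show $T^* = +\infty$.

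By the Fan–Ozawa continuation criterion mentioned in the introduction (solutions extend as long as $\nabla v \in L^1_t L^\infty_x$, adapted to this setting), it suffices to control $\|\omega(t)\|_{L^\infty}$ and lower-order quantities. The vorticity for axisymmetric flows of the form \eqref{spform} has only the swirl component, $\omega = \omega^\theta e_\theta$, and $\omega^\theta$ satisfies a transport equation
\[
\partial_t \omega^\theta + v\cdot\nabla\omega^\theta - \frac{v^r}{r}\omega^\theta = (\text{Lorentz force terms in } b^\theta),
\]
which should be analyzed in tandem with the quantity $\Pi := \omega^\theta / r$, exactly as in the Euler case of Ukhovskii–Yudovich and Shirota–Yanagisawa. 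The key new object is $b^\theta/r$: since $\text{div}\, b = 0$ forces $b = b^\theta e_\theta$ to behave like a scalar transported-diffused density, $b^\theta/r$ satisfies a nice transport–diffusion equation without stretching term (the singular $r$-factors cancel against the geometry of $e_\theta$), so that $\|b^\theta/r\|_{L^2 \cap L^\infty}$ is propagated by the maximum principle and energy estimates — this is why that hypothesis appears in the statement. One then feeds these bounds into the $\omega^\theta/r$ equation to close an a priori estimate on $\|\omega/r\|_{L^2\cap L^\infty}$, and finally recovers $\|\omega\|_{L^\infty}$ and hence $\|\nabla v\|_{L^1_t L^\infty}$ via the anisotropic Biot–Savart estimates available for axisymmetric fields (e.g. $\|\omega\|_{L^\infty}\lesssim \|\omega/r\|_{L^\infty}^{?}\cdots$ together with energy bounds).

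The main obstacle is the coupling of the magnetic field back into the vorticity equation through the Lorentz force: unlike pure Euler, $\omega^\theta$ is not transported but forced by terms built from $b^\theta$ and its derivatives, and one must control these using only the parabolic regularity of $b$ and the propagated bound on $b^\theta/r$, being careful that no term forces growth faster than exponential (which would still be acceptable for global existence, since we only need finiteness on each $[0,T]$). Getting the right combination of the $\omega^\theta/r$ equation, the $b^\theta/r$ equation, and the smoothing estimates for $b$ to close — in particular handling the potentially singular $1/r$ weights near the axis without loss — is the delicate point; the $L^2\cap L^\infty$ framework for the weighted quantities (rather than, say, $L^p$ for finite $p$) is precisely what makes the bootstrap work.
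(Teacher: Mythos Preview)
Your high-level outline is sound, and the paper follows the same skeleton: local theory plus a priori bounds via the coupled system for $\Omega=\omega^\theta/r$ and $\Gamma=b^\theta/r$. But two steps in your bootstrap are not right as stated and would not close.

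First, you propose to propagate $\omega/r$ in $L^2\cap L^\infty$ and then ``recover $\|\omega\|_{L^\infty}$ \ldots\ via the anisotropic Biot--Savart estimates''. There is no such estimate: since $\omega = r\cdot(\omega/r)$ and $r$ is unbounded, no amount of control on $\omega/r$ alone yields $\|\omega\|_{L^\infty}$. What Biot--Savart actually gives (the Shirota--Yanagisawa inequality) is
\[
\Big\|\frac{v^r}{r}\Big\|_{L^\infty}\lesssim \Big\|\frac{\omega^\theta}{r}\Big\|_{L^{3,1}},
\]
and this is why the paper works with $\Omega$ in the Lorentz space $L^{3,1}$ rather than in $L^2\cap L^\infty$. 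One then returns to the full vorticity equation $\partial_t\omega+v\cdot\nabla\omega=(v^r/r)\,\omega-\partial_z(\Gamma b)$ and obtains $\|\omega\|_{L^\infty}$ by the maximum principle and Gronwall, \emph{provided} one has $\|\nabla b\|_{L^1_tL^\infty}$. The latter is obtained from parabolic smoothing on $b$ in a Besov framework $B^{3/\sigma+1}_{\sigma,1}$; but the commutator $[\Delta_q,v\cdot\nabla]b$ appearing there is itself controlled by $\|\omega\|_{L^\infty}$, so the $\|\omega\|_{L^\infty}$ bound and the $b$--smoothing are closed \emph{simultaneously} via Gronwall. Your description misses this coupling.

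Second, closing $\Omega$ in $L^\infty$ (as you suggest) would require the forcing $\partial_z\Gamma^2$ in $L^1_tL^\infty$, hence $\nabla\Gamma\in L^1_tL^\infty$; this endpoint smoothing for a transport--diffusion equation with only $L^\infty$ data is borderline and you give no mechanism for it. The paper instead places $\partial_z\Gamma^2$ in $L^1_tL^{3,1}$ via the embedding $B^{3/2}_{2,1}\hookrightarrow W^{1,(3,1)}$ and a smoothing estimate $\|\Gamma\|_{L^1_tB^{3/2}_{2,1}}$, which in turn relies on a commutator bound (their Proposition~\ref{prop:commu2}) involving precisely $\|\Omega\|_{L^{3,1}}$. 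Finally, $\|\omega\|_{L^\infty}$ does not give $\|\nabla v\|_{L^\infty}$ directly; the paper uses the logarithmic interpolation $\|\nabla v\|_{L^\infty}\lesssim \|v\|_{L^2}+\|\omega\|_{L^\infty}\log(e+\|v\|_{H^s})$ together with the $H^s$ energy estimate, closing by a second Gronwall. You should make these three closures (the $L^{3,1}$ loop for $\Omega$ and $\Gamma$, the coupled $\|\omega\|_{L^\infty}$--$\|b\|_{L^1_tB^{3/\sigma+1}_{\sigma,1}}$ loop, and the log/$H^s$ loop) explicit.
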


Our second main result deals with the critical Besov spaces.

\begin{theorem}\label{thm:critical}
Let $p\in [2,\infty]$,    $v_0\in L^2\cap B_{{p},1}^{3/p+1}$ and  $b_0\in L^2$ be two axisymmetric divergence-free vector fields as in \eqref{spform}. Assume, in addition, that  
${\omega_0^\theta}/{r}\in L^{3,1}$ and ${b_0^\theta}/{r}\in L^2 \cap L^{\infty}$, where $\omega_0^\theta$  is the angular component of the vorticity $\omega_0:=\textnormal{curl}( v_0)$.
\begin{itemize}
\item Case $p=\infty$. If $b_0\in  B_{\sigma,1}^{3/\sigma-1}$,  $\sigma\in [2,\infty)$,
then there exists a unique global solution $(v,b)$ to the  system \eqref{MHD} such that
$$
(v,b)\in \mathcal{C}\big(\R_+; B_{\infty,1}^{1}\big)\times \Big(\mathcal{C}\big(\R_+;  B_{p,1}^{3/\sigma-1} \big)\cap {L}^1_{\textnormal{loc}}\big(\R_+;  B_{\infty,1}^{1}\big)\Big). 
$$
\item Case $p<\infty$. If $b_0\in  B_{p,1}^{3/p-1}$ 
then there exists a unique global solution $(v,b)$ to the  system \eqref{MHD} such that
$$
(v,b)\in \mathcal{C}\big(\R_+; B_{p,1}^{\frac3p+1}\big)\times \Big(\mathcal{C}\big(\R_+;  B_{p,1}^{3/p-1} \big)\cap {L}^1_{\textnormal{loc}}\big(\R_+;  B_{p,1}^{3/p+1}\big)\Big).
$$
\end{itemize}
Moreover, in both cases, one has
$$
\Big(\frac{\omega}{r},\frac{b}{r}\Big)\in L^\infty_{\textnormal{loc}}\big(\R_+;  L^{3,1}\big)\times L^\infty_{\textnormal{loc}}\big(\R_+; L^2 \cap L^{\infty}\big).
$$
\end{theorem}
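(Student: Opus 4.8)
The plan is to reduce the critical Besov result to the sub-critical one of Theorem \ref{thm:sub-critical} via global a priori bounds that are uniform enough to pass to the limit, the key geometric quantities being the transported quantities $\omega^\theta/r$ and $b^\theta/r$. First I would record the special algebraic structure of the axisymmetric MHD system in the form \eqref{spform}: since $b=b^\theta e_\theta$, the quantity $\Pi:=b^\theta/r$ satisfies a forced transport–diffusion equation
\begin{equation}
\partial_t \Pi + v\cdot\nabla \Pi - \Big(\Delta + \tfrac{2}{r}\partial_r\Big)\Pi = 0,
\end{equation}
which is a maximum-principle equation; hence $\|\Pi(t)\|_{L^2\cap L^\infty}\le \|b_0^\theta/r\|_{L^2\cap L^\infty}$ for all $t$, with no dependence on $v$. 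This is the reason the magnetic field never feeds back destructively at top order in the scalar quantity $b/r$. Next, the angular vorticity $\zeta:=\omega^\theta/r$ obeys a transport equation with a right-hand side that is quadratic in $b/r$ and its derivatives (the Lorentz stretching term), schematically $\partial_t\zeta + v\cdot\nabla\zeta = \partial_z\big((b^\theta/r)^2\big)$ up to harmless lower-order terms; combining the $L^\infty_t(L^2\cap L^\infty)$ control of $\Pi$ with an $L^1_t$–parabolic smoothing estimate for $b$ in the critical space, I would close an $L^\infty_t L^{3,1}$ bound on $\zeta$. The $L^{3,1}$ scale is exactly the one for which, by the Biot–Savart law in the axisymmetric setting (as used in \cite{d07,AHK}), one recovers $\|\omega(t)\|_{L^\infty}$ and then $\|\nabla v(t)\|_{L^\infty}$, up to a logarithmic loss that is absorbed by a Gronwall argument.

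With $\int_0^T\|\nabla v\|_{L^\infty}\,dt<\infty$ in hand, I would propagate the Besov regularity of $v$ and $b$ by the standard Littlewood–Paley/commutator machinery: apply $\Delta_q$ to the $v$-equation, use that the transport term contributes only $\|\nabla v\|_{L^\infty}\|v\|_{B^{3/p+1}_{p,1}}$ after commutator estimates, and treat the Lorentz force $b\cdot\nabla b=\operatorname{div}(b\otimes b)$ as a source estimated via the $L^1_t$-smoothing of the $b$-equation; for $b$ itself apply the maximal-regularity estimate for the heat semigroup in $B^{3/p-1}_{p,1}$, with the forcing $v\cdot\nabla b + b\cdot\nabla v$ bounded by $\|\nabla v\|_{L^\infty}\|b\|_{B^{3/p-1}_{p,1}}$ up to paraproduct remainders. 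A Gronwall inequality in the combined quantity then yields global bounds in $\mathcal C(\R_+;B^{3/p+1}_{p,1})\times(\mathcal C(\R_+;B^{3/p-1}_{p,1})\cap L^1_{\mathrm{loc}}(\R_+;B^{3/p+1}_{p,1}))$. For $p=\infty$ the borderline embedding $B^{3/\sigma-1}_{\sigma,1}\hookrightarrow B^{-1}_{\infty,1}\cap L^\infty$ is used to give the extra integrability of $b$ needed to run the same scheme, which is why the hypothesis $b_0\in B^{3/\sigma-1}_{\sigma,1}$ appears only in that case.

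Existence is then obtained by a standard approximation: regularize the data (e.g.\ by mollification, preserving axisymmetry and the structure \eqref{spform}), solve by Theorem \ref{thm:sub-critical}, note all the above a priori bounds are uniform in the regularization parameter, and pass to the limit using compactness in time from the equations and the uniform Besov bounds; uniqueness follows from a stability estimate at the level of $B^{3/p}_{p,1}\times B^{3/p-2}_{p,1}$ (one derivative below the solution space) using that one solution lies in $L^1_t\mathrm{Lip}$. The main obstacle I anticipate is the closure of the $L^\infty_t L^{3,1}$ estimate for $\zeta=\omega^\theta/r$: unlike the Euler–Boussinesq case, the source term here is genuinely quadratic in $b/r$ and involves a $z$-derivative, so one must pay one derivative on $b$; this is affordable only because the $b$-equation is parabolic and one has $b\in L^1_{\mathrm{loc}}(\R_+;B^{3/p+1}_{p,1})$, but interfacing the Lorentz-space estimate with the critical Besov smoothing (and handling the weight $1/r$ and the commutator $[v\cdot\nabla,\,\cdot\,]$ near the axis) is where the real work lies. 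A secondary delicate point is the logarithmic estimate converting $\|\omega\|_{L^{3,1}}\cap(\text{energy})$ into $\|\nabla v\|_{L^\infty}$ with a constant that does not blow up, which must be coupled to the Besov propagation via a single Osgood/Gronwall argument rather than treated separately.
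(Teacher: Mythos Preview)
Your overall architecture (control of $b^\theta/r$ by maximum principle, then $\omega^\theta/r$ in $L^{3,1}$, then $\|\omega\|_{L^\infty}$, then Besov propagation by commutator/paraproduct estimates, then approximation for existence) matches the paper and is sound. The genuine gap is the step where you pass from $\|\omega\|_{L^\infty}$ to $\|\nabla v\|_{L^\infty}$ ``up to a logarithmic loss absorbed by a Gronwall argument.'' This is precisely the step that fails at critical regularity and is the reason the paper singles out the critical case as ``more subtle, as the Beale--Kato--Majda criterion is not known to be valid.'' The logarithmic interpolation you have in mind,
\[
\|\nabla v\|_{L^\infty}\;\lesssim\;\|v\|_{L^2}+\|\omega\|_{L^\infty}\log\!\big(e+\|v\|_{X}\big),
\]
requires $X$ to embed \emph{strictly} into $B^1_{\infty,1}$ (equivalently, $\omega\in B^\varepsilon_{\infty,\infty}$ for some $\varepsilon>0$). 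In the sub-critical case $X=H^s$, $s>5/2$, there is such a margin; in the critical case $X=B^{3/p+1}_{p,1}$ one only has the endpoint embedding $B^{3/p+1}_{p,1}\hookrightarrow B^1_{\infty,1}$ with no room to spare, so no logarithmic gain is available. Without it, the coupling you describe degenerates to $\|\nabla v(t)\|_{L^\infty}\lesssim \|v(t)\|_{B^{3/p+1}_{p,1}}\lesssim C_0\exp\big(C\int_0^t\|\nabla v\|_{L^\infty}\big)$, a Riccati inequality that does not close.

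The paper circumvents this by a different mechanism, imported from \cite{AHK}: a vorticity decomposition $\omega=\sum_{j\ge-1}\widetilde\omega_j$, where each $\widetilde\omega_j$ solves the same stretched transport equation with frequency-localized data $\Delta_j\omega_0$ and source $-\partial_z\Delta_j\big(\tfrac{b^\theta}{r}b\big)$. The axisymmetric structure (Propositions \ref{prop:geo}--\ref{prop:geo2}) guarantees $\widetilde\omega_j\times e_\theta=0$ so that the stretching reduces to $\tfrac{v^r}{r}\widetilde\omega_j$; one then proves two-sided off-diagonal decay $\|\Delta_k\widetilde\omega_j\|_{L^\infty}\lesssim 2^{-|k-j|}e^{CV(t)}\big(\|\Delta_j\omega_0\|_{L^\infty}+2^j\|\Delta_j(\tfrac{b^\theta}{r}b)\|_{L^1_tL^\infty}\big)$ by transporting in $B^{-1}_{\infty,\infty}$ and in $B^{1}_{\infty,1}$ respectively (Proposition~\ref{prop-a4}). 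Summing with a cutoff $N\sim V(t)$ yields $\|\omega(t)\|_{B^0_{\infty,1}}\le(1+V(t))\Phi_2(t)$ directly, and then Gronwall on $V(t)=\int_0^t\|v\|_{B^1_{\infty,1}}$ closes. This replaces the BKM step entirely; it is the missing idea in your outline.
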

A few remarks are in order
\begin{remark}~

\begin{enumerate}[label=\rm(\roman*)]
\item The condition $\omega_0/r\in L^{3,1}$ is automatically derived from $v_0\in B_{p,1}^{1+3/p}$ for $p<3$,  see \eqref{emlb}.
\item For $p > 2$ the assumption $v_0\in L^{2}$ is still needed because  the only energy estimate owned for the MHD system is in $L^2$.
\item In the limiting case $p=\infty$,  the critical space for magnetic field is $B_{\infty,1}^{-1}$. However,   we were unable to get  smoothing effects on this space  and  had to relax with the assumption $b_0\in B_{\sigma,1}^{3/\sigma-1}$, for some $\sigma<\infty$.

\item Note that if  $\sigma\in(3,\infty)$ then we  have  $b\in  {L}^1_{\textnormal{loc}}\big(\R_+;  B_{\sigma,1}^{3/\sigma+1}\big)$.
 \end{enumerate}
\end{remark}

Now, we shall briefly  discuss the main ideas of the proof.
Note that, in space dimension three, the vorticity  $\omega:=\textnormal{curl}( v)$ satisfies the equation
\begin{equation}\label{om-int}
\partial_t   \omega+ v \cdot \nabla  \omega= \omega\cdot\nabla v+\textnormal{curl}(b\cdot\nabla b)
\end{equation}
and the way to control the vortex stretching term  in the right-hand side is a widely open
problem even for trivial magnetic fields. Nevertheless, for axisymmetric flows, as in \eqref{spform}, the
vorticity has the special form
$$
\omega=\omega^\theta e_\theta
$$
and the equation 
 \eqref{om-int} can be written as
 $$
\partial_t   \omega+ v \cdot \nabla  \omega=\frac{v^r}{r}\omega-\frac{\partial_z (b^\theta)^2}{r}e_\theta.
 $$
Thus, by dividing  the vorticity equation by $r$, one can absorb the vortex stretching term into the convection term, leaving only one term involving the quatity $ \Gamma:={b^\theta}/{r}$  as a forcing one. More precisely,   the scalar function $\Omega:={\omega^\theta}/{r}$ satisfies
\begin{equation}
\label{eq:om-int}
\partial_t  \Omega +  v \cdot \nabla\Omega=-\partial_z\Gamma^2.
\end{equation}
In the light of the proof for the 3-D axisymmetric Euler equations, the key point is to get a control of the Lorentz norm $\|\Omega\|_{L^{3,1}}$. This,  according to \eqref{eq:om-int},
  requires some strong a priori estimates on $ \Gamma$.  Since the magnetic fields has the spacial form \eqref{spform} then  the magnetic stretching term in the second equation of \eqref{MHD} reads
 $$
b\cdot\nabla v=\frac{v^r}{r}b^\theta,
 $$
and  the quantity $ \Gamma:=\frac{b^\theta}{r}$ 
solves the equation
 $$
 \partial_t  \Gamma + v \cdot \nabla \Gamma - \Big(\Delta +\frac{2}{r}\partial_r\Big)\Gamma =0 .
 $$
Therefore,  by using   the  smoothing effect
of the heat flow we obtain the desired control on $\Gamma$ and then on $\Omega$. This  allows to get  the $L^\infty$ bound of the vorticity $\omega$  for every time through the Biot-Savart law.  Then, in order to globally propagate the optimal subcritical regularities, in the scaling sense, of the MHD system \eqref{MHD} we uses some  refined informations about the axisymmetric geometry  of the magnetic field $b$. 
\vspace{0.2cm}

The situation is more complicate for the critical Besov regularities  since the Beale-Kato-Majda criterion significant quantity that one should estimate is $\|\omega\|_{B^0_{\infty,1}}$. For this aim we use the approach developed in \cite{AHK}   where the axisymmetric geometry plays a crucial role. This allows to bound for every time the Lipschitz norm of the velocity and then to propagate the regularities.
\vspace{0.2cm}

The rest of this paper is organized as follows. In section 2, we recall some function spaces and give some of their useful properties, we also gather some preliminary estimates. In Section 3  we shall be concerned with  some a priori estimate. In section
5 and 6 we give respectively the proof of Theorem 1.1 and Theorem  1.2. In the appendix, we establish
some product laws for axisymmetric flows.

\section{Tools and function spaces}

 \quad  Throughout this paper, $C$ stands for some real positive constant which may be different in each occurrence and $C_0$ for a positive constant depending on the size of the  initial data. We shall sometimes alternatively use the notation $X\lesssim Y$ for an inequality of the type $X \leq CY$. Also, for any pair of operator $D$ and $F$ on some Banach space $\mathcal{A}$, the commutator $[D,F]$ is defined by $DF-FD.$\\
 We shall  denote by
$$\Phi_{l}(t)=C_{0}\underbrace{\exp(...\exp}_{l-times}(C_{0} t^{\frac54})...),$$ where $C_{0}$ depends on the initial data and its value may vary from line to line up to some absolute constants. We will also make an intensive use of the following trivial facts
\begin{equation*}
\int^{t}_{0}\Phi_{l}(\tau) d\tau \le \Phi_{l}(t)\qquad\textnormal{and}\qquad \exp\Big(\int^{t}_{0}\Phi_{l}(\tau) d\tau\Big)\le \Phi_{l+1}(t).
\end{equation*}

\subsection*{Littlewood-Paley theory} Let us recall briefly the classical dyadic partition of the unity, for a proof see for instance \cite{BCD}: there exists two positive radial functions $\chi\in\mathcal{D}(\R^3)$ and $\varphi\in\mathcal{D}(\R^3\backslash\{0\})$ such that
\begin{equation*}\label{3}
\forall\xi\in\R^3,\quad \chi(\xi)+\sum_{q\ge0}\varphi(2^{-q}\xi)=1,
\end{equation*}
\begin{equation*}\label{4}
\forall\xi\in\R^3\backslash\{0\},\quad \sum_{q\in\mathbb{Z}}\varphi(2^{-q}\xi)=1,
\end{equation*}
\begin{equation*}\label{6}
\vert j-q\vert\ge 2\Rightarrow \quad \textnormal{Supp}\ \varphi(2^{-j}.)\cap \textnormal{Supp}\ \varphi(2^{-q}.)=\varnothing,
\end{equation*}
\begin{equation*}\label{7}
q\ge1\Rightarrow\quad \textnormal{Supp}\ \chi\cap\textnormal{Supp}\ \varphi(2^{-q}.)=\varnothing.
\end{equation*}
 For every $v \in\mathcal{ S}'(\R^3)$ one defines the non-homogeneous Littlewood-Paley operators by,
\begin{align*}
\Delta_{q}v &=\varphi(2^{-q}\textnormal{D})v= 2^{3\,q}h (2^{q}.) \ast v\qquad\textnormal{for}\qquad q\geqslant 0,\\
S_{q}v&=\chi(2^{-q}\textnormal{D})v =\displaystyle \sum_{-1\le p\le q-1}\Delta_{p}v= 2^{3q} g(2^{q}.) \ast v ,\\
\Delta_{-1}v&=S_{0}v,\qquad \Delta_{q}v=0 \qquad \textnormal{for}\qquad q\le-2.
\end{align*}
with  $h=\mathcal{F}^{-1}\varphi$ and $g=\mathcal{F}^{-1}\chi.$ Similarly, we define the homogeneous operators by
$$
\forall{q}\in \mathbb{Z}\quad \dot{\Delta}_{q} v=\varphi(2^{-q}\textnormal{D})v\quad\textnormal{and}\quad \dot{S}_{q}v=\sum_{-\infty\le j\le q-1}\dot{\Delta}_{j}v.
$$
 One can easily check that for every tempered distribution $v$, we have
	$$
	v=\sum_{q\geq -1}\Delta_qv,
	$$
		and for all $v\in\mathcal{ S}'(\R^3)/\{\mathcal{P}[\R^3]\}$ 
	$$
	v=\sum_{q\in\mathbb{Z}}\dot{\Delta}_qv,
	$$
	where $\mathcal{P}[\R^3]$ is the space of polynomials, see \cite{Pee}.
Furthermore,  the Littlewood-Paley decomposition satisfies the property of almost orthogonality
for any $u,v\in\mathcal{S}^{\prime}(\R^3),$
\begin{equation}\label{orth}
\begin{split}
\Delta_{p}\Delta_{j}u&=0\quad \textnormal{if} \qquad \vert p-j \vert \geqslant 2,\\
\Delta_{p}(S_{j-1}u\, \Delta_{j}v)&=0 \quad \textnormal{if} \qquad  \vert p-j \vert \geqslant 5.
\end{split}
\end{equation}
	The following lemma  describes how the derivatives act on spectrally localized functions, see for instance \cite{che98}.
\begin{lemma}[Bernstein inequalities]\label{br}
There exists a constant $C > 0$ such that for all $q,k  \in \mathbb{N}, 1\leq a\leq b\leq\infty$ and for every tempered
distribution $u$ we have
$$
\sup_{\vert\alpha\vert\leq k}\Vert\partial^\alpha S_q u\Vert_{L^b}\leq C^k2^{q(k+3(\frac{1}{a}-\frac{1}{b}))}\Vert S_qu\Vert_{L^a},
$$
$$
C^{-k}2^{qk}\Vert \dot{\Delta}_qu\Vert_{L^b}\leq\sup_{\vert\alpha\vert= k}\Vert\partial^\alpha \dot{\Delta}_qu\Vert_{L^b}\leq C^k2^{qk}\Vert \dot{\Delta}_qu\Vert_{L^b}.
$$
\end{lemma}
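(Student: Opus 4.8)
The plan is to derive both inequalities from Young's convolution inequality applied to rescaled smooth kernels, using only the spectral localization of $S_q u$ (Fourier support in a ball of size $\lesssim 2^q$) and of $\dot\Delta_q u$ (Fourier support in an annulus of size $\sim 2^q$).

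First I would fix auxiliary cut-offs $\widetilde\chi\in\mathcal{D}(\R^3)$ with $\widetilde\chi\equiv1$ on $\textnormal{Supp}\,\chi$ and $\widetilde\varphi\in\mathcal{D}(\R^3\setminus\{0\})$ with $\widetilde\varphi\equiv1$ on $\textnormal{Supp}\,\varphi$, so that $S_q u=\widetilde\chi(2^{-q}\textnormal{D})S_q u$ and $\dot\Delta_q u=\widetilde\varphi(2^{-q}\textnormal{D})\dot\Delta_q u$. For a multi-index $\alpha$ this gives $\partial^\alpha S_q u=2^{q|\alpha|}\,k_{q,\alpha}\ast S_q u$ with $k_{q,\alpha}(x)=2^{3q}\kappa_\alpha(2^q x)$ and $\kappa_\alpha=\mathcal{F}^{-1}\big((i\xi)^\alpha\widetilde\chi(\xi)\big)$. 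Choosing $c\in[1,\infty]$ with $\frac1c=1-\frac1a+\frac1b$ (a legitimate choice exactly because $a\le b$), Young's inequality yields
$$
\|\partial^\alpha S_q u\|_{L^b}\le 2^{q|\alpha|}\|k_{q,\alpha}\|_{L^c}\|S_q u\|_{L^a}=2^{q(|\alpha|+3(\frac1a-\frac1b))}\|\kappa_\alpha\|_{L^c}\|S_q u\|_{L^a},
$$
since $\|k_{q,\alpha}\|_{L^c}=2^{3q(1-1/c)}\|\kappa_\alpha\|_{L^c}$ and $3(1-1/c)=3(\frac1a-\frac1b)$. Summing over the $\le C^k$ multi-indices with $|\alpha|\le k$ gives the first inequality, provided we know $\|\kappa_\alpha\|_{L^c}\le C^{|\alpha|}$ uniformly; the upper bound in the second inequality is the same computation with $\widetilde\varphi$ replacing $\widetilde\chi$ and $c=1$.

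The uniform kernel bound is where the geometric constant is created. In $\R^3$ one has $\|\mathcal{F}^{-1}f\|_{L^1}\le C\|f\|_{H^2}$ by Cauchy–Schwarz (because $\langle x\rangle^{-2}\in L^2(\R^3)$), and applying this to $f=(i\xi)^\alpha\widetilde\chi$ one checks that each of the at most two $\xi$-derivatives falling on $\xi^\alpha$ costs only a factor $\lesssim|\alpha|$, so $\|\kappa_\alpha\|_{L^1}\le (C|\alpha|)^2C^{|\alpha|}\le C^{|\alpha|}$ after enlarging $C$; likewise $\|\kappa_\alpha\|_{L^\infty}\le\|(i\xi)^\alpha\widetilde\chi\|_{L^1}\le R^{|\alpha|}\|\widetilde\chi\|_{L^1}$ with $R$ the radius of $\textnormal{Supp}\,\widetilde\chi$, and interpolation covers intermediate $c$. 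The annular kernels appearing below are estimated in exactly the same way.

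The only genuinely nontrivial point is the lower bound in the second inequality, for which I would invert the Laplacian on the annulus. For every $\xi\neq0$ the multinomial theorem gives $1=|\xi|^{-2k}\big(\sum_{j=1}^3\xi_j^2\big)^k=\sum_{|\alpha|=k}\binom{k}{\alpha}|\xi|^{-2k}\xi^{2\alpha}$, hence, since $\widehat{\dot\Delta_q u}$ is supported where $\widetilde\varphi(2^{-q}\xi)=1$ and $\xi^\alpha\widehat{\dot\Delta_q u}=i^{-k}\widehat{\partial^\alpha\dot\Delta_q u}$,
$$
\dot\Delta_q u=\sum_{|\alpha|=k}\binom{k}{\alpha}i^{-k}\,2^{-qk}\,\big(2^{3q}\Theta_\alpha(2^q\cdot)\big)\ast\partial^\alpha\dot\Delta_q u,\qquad \Theta_\alpha:=\mathcal{F}^{-1}\!\Big(\widetilde\varphi(\eta)\,|\eta|^{-2k}\,\eta^\alpha\Big),
$$
where the single factor $2^{-qk}$ is precisely the gain obtained by rescaling the symbol $|\xi|^{-2k}\xi^\alpha$, which is homogeneous of degree $-k$, to the unit annulus. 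Taking $L^b$ norms, Young's inequality together with $\sum_{|\alpha|=k}\binom{k}{\alpha}=3^k$ and $\|\Theta_\alpha\|_{L^1}\le C_0^k$ (the annular support keeps $|\eta|^{-2k}$ and its first two derivatives bounded by $C_0^k$ up to powers of $k$) gives $2^{qk}\|\dot\Delta_q u\|_{L^b}\le(3C_0)^k\sup_{|\alpha|=k}\|\partial^\alpha\dot\Delta_q u\|_{L^b}$, i.e. the claim with $C=3C_0$. The main obstacle throughout is therefore not any single estimate but the careful bookkeeping of the $k$-dependence — checking that every kernel norm grows at most like $C^k$; once that is granted, the rest is a routine combination of scaling and Young's inequality.
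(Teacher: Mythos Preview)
Your argument is correct and is precisely the standard proof via rescaled kernels and Young's inequality; the paper does not give its own proof of this lemma but simply cites Chemin's monograph \cite{che98}, where the same approach is carried out. One cosmetic slip: for the first inequality you do not need to ``sum over the $\le C^k$ multi-indices'' --- since the statement asks for $\sup_{|\alpha|\le k}$, the bound on each individual $\partial^\alpha S_q u$ (together with $2^{q|\alpha|}\le 2^{qk}$ for $q\ge0$) already suffices.
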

\subsection*{Besov spaces}
Based on Littlewood-Paley operators, we can define  Besov spaces as follows. Let $(p, r) \in [1, +\infty]^2$ and $s \in \R$. The non-homogeneous Besov space $B_{p,r}^s$ is the set of tempered distributions $v$ such that
$$
\Vert v\Vert_{B_{p, r}^{s}}\triangleq\Big\Vert\big(2^{qs}\Vert\Delta_{q}v\Vert_{L^p}\big)_{q\in\mathbb{Z}}\Big\Vert_{\ell^{r}(\mathbb{Z})}<+\infty.
$$
The homogeneous Besov space $\dot{B} ^s_{p,r}$ is defined as the set of $\mathcal{ S}'(\R^3)/\{\mathcal{P}[\R^3]\}$  such that
$$
\Vert v\Vert_{\dot{B}_{p, r}^{s}}\triangleq\Big\Vert\big(2^{qs}\Vert\dot{\Delta}_{q}v\Vert_{L^p}\big)_{q\in\mathbb{Z}}\Big\Vert_{\ell^{r}(\mathbb{Z})}<+\infty.
$$
We point out that, for all $s\in\R$ the Besov space $B_{2,2}^s$ coincides with the Sobolev space $H^s$.
The following embeddings are an easy consequence of Bernstein inequalities,
$$
B_{p_1, r_1}^{s}\hookrightarrow B_{p_2, r_2}^{s+3(\frac{1}{p_2}-\frac{1}{p_1})} \quad p_1\leq p_2\quad \textnormal{and}\quad r_1\leq r_2.
$$
In order to obtain a better description of the regularizing effect of the transport-diffusion equation, we need to use Chemin-Lerner type spaces $\widetilde L^{\rho}_{T}B^{s}_{p,r}$, see for instance \cite{BCD}.
Let $T>0,$ $\rho\ge 1,$ $(p,r)\in[1,\infty]^{2}$ and $s\in\R,$ we denote by $L_{T}^{\rho}B_{p,r}^{s}$ the space of distribution $f$ such that
$$\Vert f\Vert_{L_{T}^{\rho}B_{p,r}^{s}}:=\Big\Vert \Big(2^{js}\Vert\Delta_{j}f\Vert_{L^p}\Big)_{\ell^r}\Big\Vert_{L_{T}^{\rho}}<+\infty.$$
We say that $f$ belongs to the Chemin-Lerner space $\widetilde{L}_{T}^{\rho}B_{p,r}^{s}$ if 
$$\Vert f\Vert_{\widetilde{L}_{T}^{\rho}B_{p,r}^{s}}:=\Big\Vert 2^{js}\Vert\Delta_{j}f\Vert_{L_{T}^{\rho}L^p}\Big\Vert_{\ell^r}<+\infty.$$
The relation between these spaces are detailed in the following lemma, which is a direct consequence of the  Minkowski inequality. 
\begin{lemma}\label{le1}
 Let $ s\in\R ,\varepsilon>0$ and $(p,r,\rho) \in[1,+\infty]^3.$ Then we have the following embeddings
$$L^{\rho}_{T}B^{s}_{p,r}\hookrightarrow\widetilde L^{\rho}_{T}B^{s}_{p,r}\hookrightarrow L^{\rho}_{T}B^{s-\varepsilon}_{p,r}\;\;\;\textnormal{if}\quad r\geqslant\rho.$$ 
$${L^\rho_{T}}{B_{p,r}^{s+\varepsilon}}\hookrightarrow\widetilde L^\rho_{T}{B_{p,r}^s}\hookrightarrow L^\rho_{T}B_{p,r}^s\;\;\;\textnormal{if}\quad 
\rho\geq r.$$
\end{lemma}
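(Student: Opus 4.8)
The plan is to regard this as a pure rearrangement lemma: the spaces $L^{\rho}_{T}B^{s}_{p,r}$ and $\widetilde{L}^{\rho}_{T}B^{s}_{p,r}$ differ only in the order in which the $\ell^{r}_{j}$ and $L^{\rho}_{T}$ norms are applied to the sequence $a_{j}(t):=2^{js}\|\Delta_{j}f(t)\|_{L^{p}}$, since by definition
$$
\|f\|_{L^{\rho}_{T}B^{s}_{p,r}}=\Big\|\,\big\|(a_{j}(t))_{j}\big\|_{\ell^{r}}\,\Big\|_{L^{\rho}_{T}},\qquad \|f\|_{\widetilde{L}^{\rho}_{T}B^{s}_{p,r}}=\Big\|\,\big(\|a_{j}\|_{L^{\rho}_{T}}\big)_{j}\,\Big\|_{\ell^{r}}.
$$
Hence the whole proof runs on Minkowski's inequality for mixed norms, which I will use in the form: for $1\le a\le b\le\infty$ and any measurable $h$, $\big\|\,\|h\|_{L^{a}}\,\big\|_{L^{b}}\le\big\|\,\|h\|_{L^{b}}\,\big\|_{L^{a}}$, that is, taking the larger-exponent norm as the inner one produces the larger mixed norm; together with the elementary inclusion $B^{\sigma}_{p,1}\hookrightarrow B^{\sigma}_{p,r}$, valid for every $\sigma\in\R$.

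First I would settle the two lossless inclusions, which is precisely where the comparison of $\rho$ and $r$ enters. If $r\ge\rho$, Minkowski with inner norm $L^{\rho}_{T}$ and outer norm $\ell^{r}_{j}$ gives $\|f\|_{\widetilde{L}^{\rho}_{T}B^{s}_{p,r}}\le\|f\|_{L^{\rho}_{T}B^{s}_{p,r}}$, that is $L^{\rho}_{T}B^{s}_{p,r}\hookrightarrow\widetilde{L}^{\rho}_{T}B^{s}_{p,r}$, the first inclusion on the line $r\ge\rho$. If $\rho\ge r$, the same inequality with the two norms exchanged gives $\|f\|_{L^{\rho}_{T}B^{s}_{p,r}}\le\|f\|_{\widetilde{L}^{\rho}_{T}B^{s}_{p,r}}$, that is $\widetilde{L}^{\rho}_{T}B^{s}_{p,r}\hookrightarrow L^{\rho}_{T}B^{s}_{p,r}$, the second inclusion on the line $\rho\ge r$.

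For the two inclusions that cost $\varepsilon$ derivatives I would trade regularity for summability through the factor $2^{-j\varepsilon}$, which belongs to $\ell^{q}(\{j\ge-1\})$ for every $q\in[1,\infty]$ whenever $\varepsilon>0$. To prove $\widetilde{L}^{\rho}_{T}B^{s}_{p,r}\hookrightarrow L^{\rho}_{T}B^{s-\varepsilon}_{p,r}$, I would write $2^{j(s-\varepsilon)}=2^{-j\varepsilon}2^{js}$ and apply Hölder in $j$ to obtain $\widetilde{L}^{\rho}_{T}B^{s}_{p,r}\hookrightarrow\widetilde{L}^{\rho}_{T}B^{s-\varepsilon}_{p,1}$, then use the triangle inequality in $L^{\rho}_{T}$ for $\widetilde{L}^{\rho}_{T}B^{s-\varepsilon}_{p,1}\hookrightarrow L^{\rho}_{T}B^{s-\varepsilon}_{p,1}$, and finally invoke $B^{s-\varepsilon}_{p,1}\hookrightarrow B^{s-\varepsilon}_{p,r}$ pointwise in $t$. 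To prove $L^{\rho}_{T}B^{s+\varepsilon}_{p,r}\hookrightarrow\widetilde{L}^{\rho}_{T}B^{s}_{p,r}$, I would note that for each $j$ the quantity $2^{j(s+\varepsilon)}\|\Delta_{j}f(t)\|_{L^{p}}$ is pointwise dominated in $t$ by $\big\|\big(2^{k(s+\varepsilon)}\|\Delta_{k}f(t)\|_{L^{p}}\big)_{k}\big\|_{\ell^{r}}$, so after taking the $L^{\rho}_{T}$ norm one gets $2^{j(s+\varepsilon)}\|\Delta_{j}f\|_{L^{\rho}_{T}L^{p}}\le\|f\|_{L^{\rho}_{T}B^{s+\varepsilon}_{p,r}}$ uniformly in $j$; multiplying by $2^{-j\varepsilon}$ and taking the $\ell^{r}_{j}$ norm then produces the finite constant $\|(2^{-j\varepsilon})_{j\ge-1}\|_{\ell^{r}}$ and closes the argument.

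I do not expect any genuine obstacle here, as the statement carries no PDE content and is a routine functional-analytic rearrangement. The only two points demanding care are keeping track of the nesting of the $\ell^{r}_{j}$ and $L^{\rho}_{T}$ norms so that Minkowski is applied in the admissible direction, and checking that the geometric weight $2^{-j\varepsilon}$ is summable in whichever $\ell^{q}$-scale is needed at each step, both of which are immediate for $\varepsilon>0$.
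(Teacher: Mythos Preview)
Your proposal is correct and matches the paper's approach: the paper does not give a detailed proof but simply states that the lemma ``is a direct consequence of the Minkowski inequality,'' which is exactly what you use for the two lossless embeddings, while your handling of the $\varepsilon$-lossy embeddings via the summable weight $(2^{-j\varepsilon})_{j\ge-1}$ is the standard way to fill in the details the paper omits.
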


Now, we introduce the Bony's decomposition \cite{bo81} which is  the basic tool of the para-differential calculus. 
Formally the product of two tempered distributions  $u$ and $v$ can be  divided into three parts  as follows:
\be\label{j}
uv=T_uv+T_vu+R(u,v),
\ee
where
$$
T_uv=\sum_qS_{q-1}u\,\Delta_qv\quad\textnormal{and}\quad R(u,v)=\sum_q\Delta_qu\,\widetilde{\Delta}_qv,\quad \textnormal{with}\quad\widetilde{\Delta}_q =\sum_{i=-1}^1\Delta_{q+i}.
$$
\subsection*{ Some commutator estimates}
The following proposition is proved in \cite{HK}.
\begin{proposition}\label{prop:commu}
Let $u$ be a smooth function and $v$ be  a smooth divergence-free vector field of $\mathbb{R}^3$  such that it's vorticity $\omega:=\textnormal{curl}\, v$ belongs to $L^\infty$. Then for every $p\in[1,\infty]$ and  $q\geq -1$,  we have
$$
\Vert [\Delta_q,v \cdot \nabla]u\Vert_{L^p}\lesssim \Vert u\Vert_{L^p}\Big(\Vert \nabla\Delta_{-1}v\Vert_{L^\infty}+(q+2)\Vert \omega\Vert_{L^\infty}\Big).
$$
\end{proposition}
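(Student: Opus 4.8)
The plan is to use Bony's decomposition to split the commutator $[\Delta_q, v\cdot\nabla]u$ into paraproduct and remainder pieces, and to treat each piece with the standard commutator/convolution argument, the only novelty being that the low-frequency part of $\nabla v$ must be estimated through the vorticity rather than directly through $\nabla v$ (which need not be in $L^\infty$). Concretely, write $v\cdot\nabla u=\sum_k\partial_k(v^k u)$ using $\operatorname{div}v=0$, and decompose $v^k u=T_{v^k}u+T_u v^k+R(v^k,u)$. Applying $\Delta_q$ and commuting, one gets $[\Delta_q,v\cdot\nabla]u=\sum_k\big([\Delta_q,T_{v^k}]\partial_k u + \Delta_q\partial_k(T_{\partial_k u}v^k)+\Delta_q\partial_k R(v^k,u) - T_{\partial_k v^k}\cdot(\cdots)\big)$, or more cleanly one keeps the commutator only on the paraproduct term $T_{v^k}\Delta_q u$ and leaves the other two terms (which are genuine products, not commutators) to be estimated directly. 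By spectral localization, in $T_{v^k}\partial_k u$ only the blocks with $|j-q|\le 4$ contribute to $\Delta_q$, and in each such block $S_{j-1}v^k$ appears; splitting $S_{j-1}v^k = \Delta_{-1}v^k + \sum_{-1\le m\le j-2}\Delta_m v^k$ and using Bernstein, $\|\nabla S_{j-1}v\|_{L^\infty}\lesssim \|\nabla\Delta_{-1}v\|_{L^\infty}+\sum_{0\le m\le j-2}2^m\|\Delta_m v\|_{L^\infty}$, and here the key input is the classical logarithmic bound $\|\Delta_m v\|_{L^\infty}\lesssim 2^{-m}\|\Delta_m\omega\|_{L^\infty}\lesssim 2^{-m}\|\omega\|_{L^\infty}$ for $m\ge 0$ coming from the Biot–Savart law, so the sum contributes $\lesssim (j+1)\|\omega\|_{L^\infty}\lesssim (q+2)\|\omega\|_{L^\infty}$.

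The commutator on the paraproduct is handled by the standard device: $[\Delta_q, S_{j-1}v^k]\partial_k\Delta_j u$ is a convolution against $2^{3q}h(2^q\cdot)$ against which one performs a first-order Taylor expansion, gaining a factor $2^{-q}$ that is absorbed by the derivative $\partial_k$, leaving $\|[\Delta_q, S_{j-1}v^k]\partial_k\Delta_j u\|_{L^p}\lesssim \|\nabla S_{j-1}v\|_{L^\infty}\|\Delta_j u\|_{L^p}$; summing over $|j-q|\le 4$ and inserting the bound above yields the claimed estimate for this piece. For the term $\Delta_q\partial_k(T_{\partial_k u}v^k)$ one uses that $\Delta_q$ forces $j\gtrsim q$, writes $\partial_k\Delta_q(S_{j-1}\partial_k u\,\Delta_j v^k)$ and estimates $\|S_{j-1}\partial_k u\|_{L^\infty}$... here one must again avoid $\nabla u$ in $L^\infty$, so instead one keeps $\|S_{j-1}u\|_{L^\infty}\lesssim\|u\|_{L^p}$ only after a Bernstein trade; the cleanest route, which I would follow, is the one used in \cite{HK}: rewrite this contribution plus the remainder term together as $\Delta_q\big(\text{(something)}\cdot\nabla u\big)$-type expressions where the derivative always falls on $v$ (via $\operatorname{div}v=0$ once more, moving derivatives around), so that only $\|u\|_{L^p}$ and dyadic pieces of $\nabla v$ (hence $\omega$) appear. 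The remainder $R(v^k,u)$ is the easiest: $\Delta_q\partial_k R(v^k,u)=\Delta_q\partial_k\sum_{j\ge q-3}\Delta_j v^k\widetilde\Delta_j u$, and here $2^q$ from $\partial_k$ combined with $\sum_{j\ge q-3}2^{q-j}$ being summable reduces everything to $\sup_j 2^j\|\Delta_j v\|_{L^\infty}\|\widetilde\Delta_j u\|_{L^p}\lesssim\|\omega\|_{L^\infty}\|u\|_{L^p}$ for the high-frequency part, with the low block $j=-1$ giving $\|\nabla\Delta_{-1}v\|_{L^\infty}\|u\|_{L^p}$.

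The main obstacle, and the only place where genuine care is needed, is precisely the fact that one cannot write $\|\nabla v\|_{L^\infty}$ anywhere: $\nabla v$ is recovered from $\omega$ by a Riesz-type operator which is not bounded on $L^\infty$, producing only the logarithmic loss $\|\nabla v\|_{L^\infty}\lesssim \|\nabla\Delta_{-1}v\|_{L^\infty}+\|\omega\|_{L^\infty}\log(\cdots)$. Organizing Bony's decomposition so that every derivative hitting $v$ can be paired with a single dyadic block $\Delta_m v$ (whose $L^\infty$ norm is controlled by $2^{-m}\|\omega\|_{L^\infty}$), and making sure the resulting factor $(q+2)$ — rather than a worse power of $q$ — comes out, is the crux; the summation over $|j-q|\le 4$ in the paraproduct is what keeps the constant linear in $q$. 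Everything else is the routine convolution/Taylor-expansion commutator estimate, and I would cite \cite{BCD} for those standard lemmas rather than reproduce them.
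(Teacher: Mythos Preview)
The paper does not give its own proof of this proposition: it is stated with the sentence ``The following proposition is proved in \cite{HK}'' and no argument is supplied. So there is nothing in the paper to compare your plan against beyond the fact that the authors defer entirely to Hmidi--Keraani.

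Your plan is the standard one and is essentially what is carried out in \cite{HK}: Bony's decomposition of $v\cdot\nabla u$ (or of $v^k u$ after using $\operatorname{div}v=0$), the Taylor/convolution commutator lemma on the paraproduct piece $[\Delta_q,S_{j-1}v]\cdot\nabla\Delta_j u$ with $|j-q|\le 4$, and direct estimates on the reverse paraproduct and remainder. You have correctly isolated the only nontrivial point, namely that one must never invoke $\|\nabla v\|_{L^\infty}$ but instead use the block-wise Biot--Savart bound $\|\Delta_m v\|_{L^\infty}\lesssim 2^{-m}\|\Delta_m\omega\|_{L^\infty}$ for $m\ge 0$, so that $\|\nabla S_{j-1}v\|_{L^\infty}\lesssim \|\nabla\Delta_{-1}v\|_{L^\infty}+(j+1)\|\omega\|_{L^\infty}$; the finite sum over $|j-q|\le 4$ then converts this to the stated $(q+2)\|\omega\|_{L^\infty}$. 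Your treatment of the remainder is also right: the factor $2^q$ from the divergence pairs with $2^{-j}$ from $\|\Delta_j v\|_{L^\infty}$ to give a convergent geometric sum.

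The only place where your write-up is a bit loose is the reverse paraproduct $T_{u}v^k$ (or $T_{\nabla u}v$): you should note that in $\sum_{|j-q|\le 4}\Delta_q(S_{j-1}\partial_k u\,\Delta_j v^k)$ the Bernstein bound $\|S_{j-1}\partial_k u\|_{L^p}\lesssim 2^{j}\|u\|_{L^p}$ balances exactly against $\|\Delta_j v\|_{L^\infty}\lesssim 2^{-j}\|\omega\|_{L^\infty}$ for $j\ge 0$, giving $\|\omega\|_{L^\infty}\|u\|_{L^p}$ directly, and that the block $j=-1$ does not actually occur here since $S_{-2}=S_{-1}=0$. Once that is made explicit, your argument is complete and matches the cited source.
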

\noindent The following proposition is proved in \cite{HR}
\begin{proposition}\label{prop:commu2}
Let  $v$ be  an axisymmetric smooth and  divergence-free vector field without swirl  and let $u$ be a smooth scalar function.   
Then there exists $C>0$ such that  for every $q\in \mathbb{N}\cup \{-1\}$,  we have
$$
\big\Vert [\Delta_q,v \cdot \nabla]u\big\Vert_{L^2}\leq C\Big\Vert \frac{\omega^\theta}{r}\Big\Vert_{L^{3,1}}\Big(\Vert x_h u\Vert_{L^6}+\Vert u\Vert_{L^2}\Big).
$$
where $\omega^\theta$ is the angular component of $\omega=\nabla\times v$.
\end{proposition}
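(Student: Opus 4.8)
The plan is to bypass Proposition~\ref{prop:commu} entirely --- its bound costs $\|\omega\|_{L^\infty}$, which is not at our disposal --- and to exploit instead the rigid algebraic structure of an axisymmetric divergence-free field without swirl. The starting point is that, since $v=v^r e_r+v^z e_z$ with $\frac1r\partial_r(rv^r)+\partial_z v^z=0$ and $x_h\cdot\nabla=r\partial_r$ on axisymmetric functions, one has for every axisymmetric scalar $w$ the identity
\[
v\cdot\nabla w=\partial_z\!\big(v^z w\big)+\textnormal{div}\!\Big(\tfrac{v^r}{r}\,x_h\,w\Big),\qquad x_h:=(x_1,x_2,0).
\]
Writing this for $w=u$ and for $w=\Delta_q u$ (still axisymmetric) and subtracting, the transport operator cancels and we are reduced to two commutators with multiplications,
\[
[\Delta_q,v\cdot\nabla]u=\partial_z\big([\Delta_q,v^z]\,u\big)+\textnormal{div}\big([\Delta_q,\tfrac{v^r}{r}x_h]\,u\big),\qquad [\Delta_q,f]g:=\Delta_q(fg)-f\Delta_q g .
\]
The gain is that $v^r/r$ is a \emph{bounded} axisymmetric scalar.

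Next I record the Biot--Savart inputs for axisymmetric flows without swirl: $\|v^r/r\|_{L^\infty}\lesssim\|\omega^\theta/r\|_{L^{3,1}}$ (here the Lorentz index $1$ is essential), and the Calder\'on--Zygmund estimate $\|\nabla(v^r/r)\|_{L^3}\lesssim\|\omega^\theta/r\|_{L^3}\lesssim\|\omega^\theta/r\|_{L^{3,1}}$. Feeding these into $\partial_r v^r+\tfrac{v^r}{r}+\partial_z v^z=0$ and $\partial_z v^r-\partial_r v^z=\omega^\theta$ (together with $\partial_r v^r=\tfrac{v^r}{r}+r\,\partial_r(\tfrac{v^r}{r})$ and $\partial_z v^r=r\,\partial_z(\tfrac{v^r}{r})$) gives the pointwise bound
\[
|\nabla v^z|\ \lesssim\ \|v^r/r\|_{L^\infty}\ +\ |x_h|\,\Big(\big|\nabla(\tfrac{v^r}{r})\big|+\big|\tfrac{\omega^\theta}{r}\big|\Big),\qquad \partial_z v^z=-2\tfrac{v^r}{r}-x_h\cdot\nabla(\tfrac{v^r}{r}),
\]
i.e. the non-$L^\infty$ part of $\nabla v^z$ always comes with the weight $|x_h|$ in front of an $L^3$ function. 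This is precisely what will force the anisotropic weight $\|x_h u\|_{L^6}$ on the right-hand side, through H\"older ($\tfrac13+\tfrac16=\tfrac12$).

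The term $\textnormal{div}\big([\Delta_q,\tfrac{v^r}{r}x_h]u\big)$ is the easy one. Splitting $[\Delta_q,\tfrac{v^r}{r}x_h^k]u=[\Delta_q,\tfrac{v^r}{r}](x_h^k u)+\tfrac{v^r}{r}\,[\Delta_q,x_h^k]u$, the algebraic commutator $[\Delta_q,x_h^k]$ has symbol $-i\,2^{-q}(\partial_k\varphi)(2^{-q}\cdot)$, so each apparent loss of $2^{-q}$ is compensated by the Bernstein factor $2^{q}$ produced by $\textnormal{div}$; using the bounds on $v^r/r$ and $\nabla(v^r/r)$ from Step~2 this piece is $\lesssim\|\omega^\theta/r\|_{L^{3,1}}\|u\|_{L^2}$. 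For the genuine commutator one invokes the standard estimate $\|\textnormal{div}\,[\Delta_q,a]g\|_{L^2}\lesssim\|\nabla a\|_{L^3}\|g\|_{L^6}$ with $a=v^r/r$, $g=x_h^k u$, which together with the Calder\'on--Zygmund bound yields $\lesssim\|\omega^\theta/r\|_{L^{3,1}}\|x_h u\|_{L^6}$.

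The main difficulty is the term $\partial_z\big([\Delta_q,v^z]u\big)$, because neither $v^z$ nor $\nabla v^z$ lies in $L^\infty$. I expand $\partial_z([\Delta_q,v^z]u)=[\Delta_q,\partial_z v^z]u+[\Delta_q,v^z]\partial_z u$. In the first term one substitutes $\partial_z v^z=-2\tfrac{v^r}{r}-x_h\cdot\nabla(\tfrac{v^r}{r})$: the $L^\infty$ part produces $\|\omega^\theta/r\|_{L^{3,1}}\|u\|_{L^2}$, and the part $x_h\cdot\nabla(\tfrac{v^r}{r})$, being $|x_h|$ times an $L^3$ function, produces $\|\omega^\theta/r\|_{L^{3,1}}\|x_h u\|_{L^6}$ by H\"older (the error from commuting $x_h$ past $\Delta_q$ is $O(2^{-q})$ and Bernstein reabsorbs it into $\|u\|_{L^2}$). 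In the second term, $[\Delta_q,v^z]\partial_z u$, one passes to the kernel $2^{3q}h(2^q(x-y))$ of $\Delta_q$ (and $g=\mathcal{F}^{-1}\chi$ when $q=-1$) and integrates by parts in $y_3$ to move the derivative off $u$; this creates one more copy of the already-treated $\partial_z v^z$-term, plus a term $\int 2^{4q}(\partial_3 h)(2^q(x-y))\big(v^z(y)-v^z(x)\big)u(y)\,dy$ in which the mean-value factor $|x-y|\sim 2^{-q}$ absorbs the extra $2^q$; inserting $|v^z(y)-v^z(x)|\lesssim |x-y|\,\big(\|v^r/r\|_{L^\infty}+|x_h|\,(|\nabla(v^r/r)|+|\omega^\theta/r|)\big)$ from Step~2 and estimating the resulting convolutions by the Hardy--Littlewood maximal function (transferring the weight $|x_h|$ from the integration variable to the output modulo $O(2^{-q})$ errors) gives $\lesssim\|\omega^\theta/r\|_{L^{3,1}}\big(\|x_h u\|_{L^6}+\|u\|_{L^2}\big)$. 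Collecting the two contributions yields the stated inequality, uniformly in $q\ge-1$. I expect the genuinely delicate point to be this last step: the maximal-function bookkeeping that moves the weight $|x_h|$ onto $u$ while keeping all constants uniform in $q$, together with verifying that the non-$L^\infty$ part of $\nabla v^z$ is exactly of the form $|x_h|\times(L^3)$ --- this is what dictates the appearance of $\|x_h u\|_{L^6}$.
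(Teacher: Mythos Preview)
The paper does not prove this proposition; it cites \cite{HR} (Hmidi--Rousset) for the result. So there is no in-paper argument to compare against directly, and I will assess your proposal on its own merits.

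Your strategy is sound and identifies the right structural facts: the divergence-form identity $v\cdot\nabla w=\partial_z(v^z w)+\textnormal{div}\big(\tfrac{v^r}{r}x_h w\big)$, the bounds $\|v^r/r\|_{L^\infty}+\|\nabla(v^r/r)\|_{L^3}\lesssim\|\omega^\theta/r\|_{L^{3,1}}$, and the decomposition of $\nabla v^z$ into an $L^\infty$ piece plus $|x_h|$ times an $L^3$ function. Two remarks. First, a minor one: your opening identity needs $w$ (hence $u$ and $\Delta_q u$) to be axisymmetric; the proposition as stated in the paper does not assume this, but every application in the paper is to axisymmetric $u=\Gamma$, so this is harmless---just make the hypothesis explicit. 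Second, the genuine soft spot is exactly where you flag it: in Step~4 the bound ``$|v^z(y)-v^z(x)|\lesssim|x-y|\big(\|v^r/r\|_{L^\infty}+|x_h|(\ldots)\big)$'' hides that the $L^3$ factor $|\nabla(v^r/r)|+|\omega^\theta/r|$ is evaluated at a point on the segment $[x,y]$, not at $x$; after inserting the mean-value integral $\int_0^1\ldots\,dt$ you face a trilinear expression $\iint \tilde K_q(x-y)\,G((1-t)x+ty)\,|u(y)|\,dy\,dx$ with an external weight $|x_h|$ (or $|y_h|$), and the claim that maximal functions control this with the correct $L^3\times L^6\to L^2$ pairing, uniformly in $t$ and $q$, is not automatic. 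It can be done (one option: for fixed $t$ change variables so that the kernel acts by convolution on $G$, then use $L^1*L^3\subset L^3$ and pair with $|x_h u|\in L^6$; the $|x-y|$-correction from moving the weight indeed costs only $2^{-q}$ and can be absorbed using Bernstein and $\|u\|_{L^2}$), but you should write this computation out explicitly rather than invoking ``maximal-function bookkeeping''. Once that is filled in, your argument goes through.
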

\noindent The following  commutator estimate is  proved in  \cite{che98},
\begin{lemma} \label{m1}
Let $\eta$ be a smooth function and $v$ be a smooth vector field of $\R^{3}$ with zero divergence. Then we have, for all $1 \le p \le \infty$  and $-1<s< 1$,
\begin{equation*}
\sum_{j \ge -1}2^{js}\Vert[\Delta_{j}, v\cdot\nabla]u\Vert_{L^p}\lesssim 
\Vert\nabla v\Vert_{L^\infty}\Vert u\Vert_{B_{p,1}^s}.
\end{equation*}
\end{lemma}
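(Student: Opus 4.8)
This is the classical Chemin commutator estimate, and the plan is the standard one: reduce to a single dyadic block, split the product by Bony's formula \eqref{j}, and estimate the pieces separately. Since $v$ is divergence free, $v\cdot\nabla w=\sum_k\partial_k(v^k w)$ for any $w$, so
$$[\Delta_j,v\cdot\nabla]u=\sum_k\partial_k\big(\Delta_j(v^k u)-v^k\Delta_j u\big).$$
I would then apply \eqref{j} to both $v^k u$ and $v^k\Delta_j u$ and regroup the resulting pieces as: the genuine commutator of $\Delta_j$ with the paraproduct operator, $[\Delta_j,T_{v^k}]u$; two reaction paraproducts $\Delta_j(T_u v^k)$ and $T_{\Delta_j u}v^k$; and two remainders $\Delta_j R(v^k,u)$ and $R(v^k,\Delta_j u)$. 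After summing in $k$ one uses $\sum_k\partial_k v^k=0$ once more to push the outer $\partial_k$ onto the factor where it does the least damage, e.g. $\sum_k\partial_k T_u v^k=\sum_k T_{\partial_k u}v^k$ and, since $R(\mathrm{div}\,v,u)=0$, also $\sum_k\partial_k R(v^k,u)=\sum_k R(v^k,\partial_k u)$.

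The heart of the proof is the first term. By the almost-orthogonality \eqref{orth} only indices $q$ with $|q-j|\le 4$ contribute, and writing $\Delta_j f=2^{3j}h(2^j\cdot)\ast f$ gives the kernel representation
$$[\Delta_j,S_{q-1}v^k]\Delta_q u\,(x)=\int 2^{3j}h\big(2^j(x-y)\big)\big(S_{q-1}v^k(y)-S_{q-1}v^k(x)\big)\Delta_q u(y)\,dy.$$
The mean value theorem gives $|S_{q-1}v^k(y)-S_{q-1}v^k(x)|\le|x-y|\,\|S_{q-1}\nabla v\|_{L^\infty}\lesssim|x-y|\,\|\nabla v\|_{L^\infty}$ (using that $S_{q-1}$ is bounded on $L^\infty$ uniformly in $q$), and the weight $|x-y|$ is absorbed by the Schwartz decay of $h$, gaining a factor $2^{-j}$. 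Young's inequality then yields $\|[\Delta_j,T_{v^k}]u\|_{L^p}\lesssim 2^{-j}\|\nabla v\|_{L^\infty}\sum_{|q-j|\le4}\|\Delta_q u\|_{L^p}$; the outer $\partial_k$ costs exactly $2^j$ by Bernstein, since this term is spectrally supported in an annulus of size $2^j$, so the two powers cancel. Multiplying by $2^{js}$ and summing in $j$ (an $\ell^1$ sum, the harmless index shift notwithstanding) bounds this contribution by $\|\nabla v\|_{L^\infty}\|u\|_{B^s_{p,1}}$ for \emph{every} $s\in\R$.

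The four remaining terms are honest products of spectrally localised functions, estimated by H\"older together with $\|\Delta_q v\|_{L^\infty}\lesssim 2^{-q}\|\nabla v\|_{L^\infty}$ (for the paraproduct terms only frequencies $q\ge 1$ of $v$ occur, since $S_{q-1}\ne 0$ forces $q\ge1$, so this is legitimate there; the finitely many lowest-frequency interactions are handled directly). For $\Delta_j(T_{\partial_k u}v^k)$ one is led, after $\|S_{q-1}\partial_k u\|_{L^p}\lesssim\sum_{q'\le q}2^{q'}\|\Delta_{q'}u\|_{L^p}$, to a discrete convolution whose kernel $2^{-m(1-s)}\mathbf 1_{\{m\ge0\}}$ lies in $\ell^1$ precisely because $s<1$, while $T_{\partial_k\Delta_j u}v^k$ is similar but easier. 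For the two remainders one keeps the outer $\partial_k$ on the spectrally localised $\Delta_j$-block (it costs $2^j$ by Bernstein) and combines this with $2^{-q}\|\nabla v\|_{L^\infty}$ from $\|\Delta_q v\|_{L^\infty}$; the surviving $2^{j-q}$, together with the support constraint $q\ge j-C$, gives a discrete convolution with kernel $2^{m(1+s)}\mathbf 1_{\{m\le C\}}$, in $\ell^1$ precisely because $s>-1$. Summing the five estimates gives the lemma.

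\noindent\textbf{Main obstacle.} The only genuinely delicate point is the kernel estimate for $[\Delta_j,T_{v^k}]u$ together with the bookkeeping of the spectral supports: one must verify that the $2^{-j}$ gained from the mean value theorem exactly balances the $2^{j}$ lost to the outer derivative, uniformly over the finitely many $q\sim j$ that contribute. Everything else is routine, and the hypothesis $-1<s<1$ is nothing but the requirement that the two geometric series above converge — the first needing $s<1$, the second $s>-1$.
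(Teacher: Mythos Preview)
The paper does not prove this lemma; it simply records it as ``proved in \cite{che98}'' (Chemin's book on perfect incompressible fluids) and uses it as a black box. Your proposal reproduces the standard proof found there (and, in a closely related form, in Lemma~2.100 of \cite{BCD}): Bony decomposition, the kernel/mean-value argument for $[\Delta_j,T_v]$, and discrete-convolution bookkeeping for the remaining pieces, with the restrictions $s<1$ and $s>-1$ arising exactly from the two geometric series you identify. So there is nothing to compare against in the paper itself, and your approach is the canonical one.

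One small point worth tightening: the phrase ``the finitely many lowest-frequency interactions are handled directly'' hides the only genuinely non-obvious low-frequency issue, namely the $q=-1$ contribution to the two remainders, where $\|\Delta_{-1}v\|_{L^\infty}$ is \emph{not} controlled by $\|\nabla v\|_{L^\infty}$. In the standard references this is resolved either by keeping those terms grouped with the commutator part (so the kernel/mean-value argument still applies to $\Delta_{-1}v$), or by observing that the two remainder pieces at $q=-1$ together form the commutator $[\Delta_j,\Delta_{-1}v^k\cdot]\widetilde\Delta_{-1}u$, which again yields $\|\nabla v\|_{L^\infty}$ rather than $\|v\|_{L^\infty}$. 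Your sketch is otherwise correct; also note that in the paper's conventions $S_{-1}=0$, so the paraproduct sum starts at $q=0$ (not $q=1$), but $\Delta_0$ is already annulus-localised and the Bernstein bound $\|\Delta_0 v\|_{L^\infty}\lesssim\|\nabla v\|_{L^\infty}$ is fine.
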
 
\subsection*{Lorentz spaces and interpolation} 
Let $p\in ]1,\infty[$, $q\in [1,\infty]$, The Lorentz space $L^{p,q}$ can be defined by real interpolation from Lebesgue spaces:
$$
(L^{p_{0}} , L^{p_{1}})_{(\theta,q)} = L^{p,q}
$$
where $1\leq p_{0}<p < p_1\leq \infty$ and $\theta$ satisfies $\dfrac{1}{p}=\dfrac{1-\theta}{p_{0}}+\dfrac{\theta}{p_{1}}$ with $0<\theta<1$.
The spaces $L^{p,q}$ have the following properties: (see pages 18-20 of \cite{L})
\begin{align}\label{lor}
\nonumber& L^{p,p} = L^{p}, \\ &\nonumber L^{p,q} \hookrightarrow L^{p,q'},\quad 1\leq q\leq q'\leq \infty,\\
& \Vert uv\Vert_{L^{p,q}}\leq \Vert u\Vert_{L^{\infty}}\Vert v\Vert_{L^{p,q}}.
 \end{align}
 Furthermore, for all $1\leq p<3$ we have the embedding, see page 21 of \cite{AHK}, 
 \be\label{emlb}
 B_{p,1}^{\frac3p-1}\hookrightarrow L^{3,1}.
 \ee
\subsection*{Estimates for a transport equation}
We need the following proposition which deals with the persistence of
Besov regularities in a transport equation. Abidi, Hmidi and Keraani, in \cite{AHK}, proved the result for the case $s=\pm1$. The remainder cases are done in \cite{che98, vis98}.
\begin{proposition}[Proposition A.2 in \cite{AHK}]\label{prop:trans-besov} Let $ s \in(-1,1)$, $ p,r \in[1,\infty]$ and $v$ be a smooth divergence-free vector field. Let $f$ be a smooth solution of the equation
\begin{equation*}
\partial_t f+v\cdot\nabla f=g,  \quad
f_{|t=0}=f_{0},
\end{equation*}
where $f^0 \in B_{p,r}^s(\R^3)$ and $g \in L^1_{loc}(\R_+,B_{p,r}^s).$ Then for all $ t \in \R_+,$ we have 
\begin{equation*}
\Vert f(t) \Vert_{B_{p,r}^s}\lesssim e^{C U(t)}\Big(\Vert f^0 \Vert_{B_{p,r}^s}+\int_0^t e^{-C U(\tau)}\Vert g(\tau) \Vert_{B_{p,r}^s} d\tau\Big),
\end{equation*}
with
$U(t):=\Vert\nabla v(t) \Vert_{L_t^1L^\infty}$ and $C$ is a constant depending on $s$.   The above estimate is true in the two following case
$$s=-1,\, r=\infty, 1 \le p \le \infty\quad\textnormal{and}\quad  s=r=1,\,  1 \le p \le \infty,$$
 provided that we change $U(t)$ by $V(t):=\Vert v(t) \Vert_{L_t^1 B_{\infty,1}^1}.$ 
\end{proposition}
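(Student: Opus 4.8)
The plan is to run the classical Littlewood--Paley/energy-method scheme for transport equations, the single delicate ingredient being a commutator estimate in Besov spaces. First I would localize the equation in frequency: applying $\Delta_q$ gives
$$\partial_t\Delta_q f+v\cdot\nabla\Delta_q f=\Delta_q g-[\Delta_q,v\cdot\nabla]f .$$
Since $\textnormal{div}\,v=0$, multiplying by $|\Delta_q f|^{p-2}\Delta_q f$ and integrating over $\R^3$ kills the convective term (it is a perfect divergence), so for every $1\le p\le\infty$ (the endpoint $p=\infty$ by a routine limiting argument)
$$\|\Delta_q f(t)\|_{L^p}\le\|\Delta_q f_0\|_{L^p}+\int_0^t\Big(\|\Delta_q g(\tau)\|_{L^p}+\big\|[\Delta_q,v\cdot\nabla]f(\tau)\big\|_{L^p}\Big)\,d\tau .$$
Multiplying by $2^{qs}$, taking the $\ell^r(q)$ norm and moving the time integrals outside the $\ell^r$ norm by Minkowski's inequality leaves
$$\|f(t)\|_{B_{p,r}^s}\le\|f_0\|_{B_{p,r}^s}+\int_0^t\|g(\tau)\|_{B_{p,r}^s}\,d\tau+\int_0^t\Big\|\big(2^{qs}\|[\Delta_q,v\cdot\nabla]f(\tau)\|_{L^p}\big)_q\Big\|_{\ell^r}\,d\tau .$$

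The crux is the commutator estimate: for $s\in(-1,1)$ and all $p,r\in[1,\infty]$,
$$\Big\|\big(2^{qs}\|[\Delta_q,v\cdot\nabla]f\|_{L^p}\big)_q\Big\|_{\ell^r}\lesssim\|\nabla v\|_{L^\infty}\|f\|_{B_{p,r}^s} .$$
I would prove this by inserting the Bony decomposition \eqref{j} of $v\cdot\nabla f=\sum_k\big(T_{v^k}\partial_k f+T_{\partial_k f}v^k+R(v^k,\partial_k f)\big)$ and of $v\cdot\nabla\Delta_q f$, so that the commutator splits into three types of terms: a paraproduct piece $[\Delta_q,T_{v^k}]\partial_k f$, handled by the classical commutator argument (first-order Taylor expansion, using that $\Delta_q$ is a convolution operator), which produces precisely the factor $\|\nabla v\|_{L^\infty}$ together with an $\ell^r$-summable coefficient and needs no restriction on $s$; a high-frequency paraproduct $\Delta_q\big(T_{\partial_k f}v^k\big)$ estimated by Bernstein's inequalities, whose summation converges exactly because $s<1$; and a remainder piece which, after using $\textnormal{div}\,v=0$ to write $\sum_k R(v^k,\partial_k f)=\sum_k\partial_k R(v^k,f)$, is summable thanks to $s>-1$ (the symmetric terms from the expansion of $v\cdot\nabla\Delta_q f$ being treated the same way). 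Inserting this bound into the displayed inequality reduces everything to the integral Gronwall inequality $X(t)\le A(t)+C\int_0^t\|\nabla v(\tau)\|_{L^\infty}X(\tau)\,d\tau$, with $X(t)=\|f(t)\|_{B_{p,r}^s}$ and $A(t)=\|f_0\|_{B_{p,r}^s}+\int_0^t\|g(\tau)\|_{B_{p,r}^s}\,d\tau$; solving it and integrating by parts in the resulting double time integral yields exactly the claimed form $e^{CU(t)}\big(\|f_0\|_{B_{p,r}^s}+\int_0^t e^{-CU(\tau)}\|g(\tau)\|_{B_{p,r}^s}\,d\tau\big)$ with $U(t)=\|\nabla v\|_{L_t^1L^\infty}$.

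For the endpoint cases $s=-1,\ r=\infty$ and $s=r=1$ this commutator estimate fails: at $s=\pm1$ the high-frequency paraproduct, respectively the remainder, loses a logarithm and cannot be closed with $\|\nabla v\|_{L^\infty}$ alone. Following \cite{AHK} — and in the spirit of Lemma \ref{m1}, which already covers $-1<s<1$ with $r=1$ — I would instead trade that loss for the (logarithmically) stronger control $\|v\|_{B_{\infty,1}^1}$ of the drift and prove
$$\Big\|\big(2^{qs}\|[\Delta_q,v\cdot\nabla]f\|_{L^p}\big)_q\Big\|_{\ell^r}\lesssim\|v\|_{B_{\infty,1}^1}\,\|f\|_{B_{p,r}^s},\qquad s\in\{-1,1\},$$
and then rerun the Gronwall step verbatim with $U$ replaced by $V(t)=\|v\|_{L_t^1B_{\infty,1}^1}$; the remaining cases $-1<s<1$ with $r\ne1$ follow from the same Bony decomposition (see also \cite{che98,vis98}). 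The main obstacle is exactly these boundary commutator estimates at $s=\pm1$: controlling the borderline paraproduct and remainder interactions without the logarithmic loss is where essentially all the work lies, whereas the $L^p$ energy estimate and the Gronwall argument are routine.
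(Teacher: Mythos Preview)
The paper does not actually prove this proposition: it is quoted from the literature, with the case $s\in(-1,1)$ attributed to \cite{che98,vis98} and the endpoint cases $s=\pm1$ to \cite{AHK}. Your outline is correct and is precisely the standard argument found in those references --- frequency localization, $L^p$ energy estimate using $\textnormal{div}\,v=0$, a Bony-decomposition commutator bound (with $\|\nabla v\|_{L^\infty}$ for $-1<s<1$ and $\|v\|_{B^1_{\infty,1}}$ at the endpoints), and Gronwall --- so there is nothing to compare beyond noting that you have reproduced the cited proofs.
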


\noindent We recall the following Nash-De Giorgi estimate for the  convection-diffussion equation. 
\begin{proposition}[Lemma A.1 in \cite{HR}]\label{prop:disp}

Consider the equation 
\begin{equation}\label{dis}
\partial_t f+v\cdot\nabla f-\Delta f=\nabla\cdot F+G, \quad t>0, \quad x\in\R^3,\quad 
f_{|t=0}=f_{0}.
\end{equation}
Consider $p,q,p_1,q_1\in[0,\infty]$, $r\in[2,\infty]$ with 
$
\,\frac2p+\frac3q<1, \quad \frac2{p_1}+\frac3{q_1}<2.
$
There exists $C >0$ such that for every smooth divergence-free vector field $v$ and every $T >0$, if
$F\in L^p_TL^q$, $G\in L^{p_1}_TL^{q_1}$ and $f_0\in L^r$ then the solution of \eqref{dis} satisfies the estimate:
\begin{align*}
\Vert f(T)\Vert_{L^\infty}\leq &C(1+T^{-\frac{3}{2r}})\Vert f_0\Vert_{L^r}+C\big(1+\sqrt{T}^{1-(\frac2p+\frac3q)}\big)\Vert F\Vert_{L^p_TL^q}+C\big(1+\sqrt{T}^{2-(\frac2{p_1}+\frac3{q_1})}\big)\Vert G\Vert_{L^{p_1}_TL^{q_1}}.
\end{align*}
\end{proposition}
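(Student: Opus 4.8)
The plan is to represent the solution of \eqref{dis} through Duhamel's formula and to reduce the whole estimate to Nash-type smoothing bounds for the linear convection–diffusion propagator. Denote by $U(t,s)$ the solution operator of the homogeneous equation $\partial_t g+v\cdot\nabla g-\Delta g=0$; since $v$ is smooth and divergence free this propagator is well defined and Duhamel gives
\[
f(t)=U(t,0)f_0+\int_0^t U(t,s)\big(\nabla\cdot F(s)+G(s)\big)\,ds.
\]
The heart of the matter is the family of $L^a\to L^b$ smoothing estimates
\[
\|U(t,s)g\|_{L^b}\lesssim (t-s)^{-\frac32(\frac1a-\frac1b)}\|g\|_{L^a},\qquad 1\le a\le b\le\infty,
\]
together with their first–order (gradient) counterparts, the constants being \emph{independent of $v$}. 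Granting these, the estimate of the proposition becomes a matter of applying the right smoothing exponent to each of the three data terms and integrating in time.

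For the initial datum one applies the $L^r\to L^\infty$ bound directly, $\|U(t,0)f_0\|_{L^\infty}\lesssim t^{-\frac{3}{2r}}\|f_0\|_{L^r}$, which already yields the factor $T^{-3/(2r)}$. For the zeroth–order source $G$ one writes $\|U(t,s)G(s)\|_{L^\infty}\lesssim (t-s)^{-\frac{3}{2q_1}}\|G(s)\|_{L^{q_1}}$ and then applies Hölder in time with exponents $p_1,p_1'$; the time integral $\int_0^t(t-s)^{-\frac{3}{2q_1}p_1'}\,ds$ converges precisely when $\frac{2}{p_1}+\frac{3}{q_1}<2$, and a direct computation of the surviving power of $t$ gives the exponent $\frac12\big(2-(\frac{2}{p_1}+\frac{3}{q_1})\big)$, i.e. the factor $\sqrt{T}^{\,2-(\frac{2}{p_1}+\frac{3}{q_1})}$. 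For the divergence–form source $\nabla\cdot F$ one transfers the derivative onto the adjoint propagator by duality, $\|U(t,s)\nabla\cdot F(s)\|_{L^\infty}\lesssim (t-s)^{-\frac12-\frac{3}{2q}}\|F(s)\|_{L^q}$, and again integrates in time with Hölder; here convergence of $\int_0^t(t-s)^{-(\frac12+\frac{3}{2q})p'}\,ds$ requires exactly $\frac2p+\frac3q<1$ and produces the power $\sqrt{T}^{\,1-(\frac2p+\frac3q)}$. Thus the two scaling hypotheses of the statement are exactly the integrability thresholds of the three Duhamel integrals, and the three powers of $T$ come out as asserted; the additive constants $1$ are then only a convenient upper bound, since every contribution is of the form (power of $T$) which is trivially $\le 1+$(power of $T$).

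The single point requiring genuine work is the $v$–independence of the smoothing constants, which rests on the structural cancellation produced by the divergence–free condition: for any reasonable truncation of $g$ one has $\int_{\R^3}(v\cdot\nabla g)\,g\,dx=\tfrac12\int_{\R^3}v\cdot\nabla(g^2)\,dx=-\tfrac12\int_{\R^3}(\textnormal{div}\,v)\,g^2\,dx=0$, so the drift disappears from every $L^2$ energy identity and the Nash–De Giorgi–Moser iteration closes with constants depending only on the dimension and on the exponents. The zeroth–order bounds then follow from Nash's moment method (equivalently from Aronson's Gaussian upper bound, valid with universal constants for divergence–free drifts), and the same $L^2$ energy inequality is where the datum $F$ is naturally absorbed, estimating $\int F\cdot\nabla f$ by Young against the good term $\|\nabla f\|_{L^2}^2$ without ever touching the kernel. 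The main obstacle is precisely the first–order estimate needed for the $\nabla\cdot F$ term, namely the gradient bound $\|\nabla U^{*}(t,s)\phi\|_{L^{q'}}\lesssim (t-s)^{-\frac12-\frac{3}{2q}}\|\phi\|_{L^1}$ for the adjoint propagator, whose drift $-v$ is again divergence free: this is the delicate ingredient, and I would establish it either through the gradient Gaussian bounds for divergence–free drifts or, more robustly and without any kernel regularity, by running the De Giorgi level–set iteration directly on \eqref{dis}, where the derivative coming from $\nabla\cdot F$ is absorbed into the diffusion term and never needs to be placed on the kernel. Once this gradient/divergence estimate is in hand with a $v$–independent constant, assembling the three contributions and optimising the time weights yields the stated inequality.
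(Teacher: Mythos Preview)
The paper does not prove this proposition at all: it is quoted verbatim as Lemma~A.1 of \cite{HR} and used as a black box. So there is no ``paper's own proof'' to compare against here; the relevant comparison is with the argument in \cite{HR}.

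Your Duhamel/propagator outline is coherent, and you correctly isolate the one nontrivial point: the $v$-independent first-order bound $\|U(t,s)\nabla\cdot F(s)\|_{L^\infty}\lesssim (t-s)^{-\frac12-\frac{3}{2q}}\|F(s)\|_{L^q}$. For divergence-free drifts this does follow from Osada/Zhang-type Gaussian gradient bounds, but that machinery is heavier than what is needed, and you are right to flag it as the weak link of the kernel approach. The proof actually given in \cite{HR} takes the second route you mention at the end: it runs the De Giorgi level-set/truncation iteration directly on the equation with source $\nabla\cdot F+G$, so that the divergence-form term is absorbed into the energy inequality via $\int F\cdot\nabla(f-k)_+\le \tfrac12\|\nabla(f-k)_+\|_{L^2}^2+\tfrac12\|F\|_{L^2}^2$ and no propagator gradient bound is ever invoked. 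The scaling conditions $\tfrac2p+\tfrac3q<1$ and $\tfrac{2}{p_1}+\tfrac{3}{q_1}<2$ enter in that argument as the thresholds under which the De Giorgi recursion is contractive, rather than as integrability conditions for Duhamel kernels; the outcome is of course the same exponents of $T$.

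In short: your first (Duhamel) strategy is correct but front-loads a hard kernel estimate; your second (direct De Giorgi) strategy is both what \cite{HR} does and the cleaner path. Since you already name it, your proposal is acceptable, but if you were writing this out in full you should commit to the De Giorgi route and drop the propagator gradient bound, which you do not actually prove.
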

\section{Axisymmetric flows}
Along this paper, we assume that    vector fields $v :\R^3\to\R^3$ and $b:\R^3\to\R^3$  are   axisymmetric,  namely,
$$
\mathcal{R}_{-\alpha}\{v(\mathcal{R}_\alpha x)\}=v(x),\quad\textnormal{and}\quad \mathcal{R}_{-\alpha}\{b(\mathcal{R}_\alpha x)\}=b(x),\quad \forall\alpha\in[0,2\pi],\quad \forall x\in\R^3,
$$
where $\mathcal{R}_\alpha$ denotes the rotation around the axis $(Oz)$  with angle $\alpha$. The vector field $v$ is assumed to be  without swirl and 
the radial and the height components of magnetic velocity field $b$ are  trivial, that is,
\be\label{axiv}
v(x) = v^r(r, z)e_r + v^z (r, z)e_z,\qquad b(x) = b^\theta(r, z)e_\theta,
\ee
where $ x=(x_{1},x_{2},z),\; r=({x_{1}^2+x_{2}^2})^{\frac12},$ and   $(e_r, e_{\theta} , e_z)$ is the cylindrical basis of
$\mathbb R^3$, 
$$
e_{r}:=(\cos\theta, \sin\theta, 0)^T, \quad e_{\theta}:=(-\sin\theta, \cos\theta, 0)^T,\quad e_{z}:=(0, 0, 1)^T.
$$
The components $v^r$,  $v^z$ and $b^\theta$ do not depend on the angular variable $\theta$. 
Recall that, in the cylindrical coordinate systems, the gradient operator $\nabla$ is given by
\be\label{nabla}
\nabla=e_r \partial_{r}+\frac{1}{r}e_\theta\partial_\theta+e_{z}\partial_{z}.
\ee
and the expression of the Laplacian operator  is given by
\be\label{lap}
\Delta=\partial_{rr}+\frac{1}{r}\partial_r+\partial_{zz}.
\ee
The incompressible constraints, div$\, v=0$ and div$\, b=0$,  can be written as
\be\label{div}
\partial_r v^r+\frac{1}{r}v^r+\partial_zv^z=0, \quad\textnormal{and}\quad \partial_r b^r+\frac{1}{r}b^r+\partial_zb^z=0.
\ee
Moreover, for any axisymmetric vector filed $a$, one has
\be\label{dotnabla}
a\cdot \nabla=a^{r}\partial_{r}+\frac{a^\theta}{r}\partial_\theta+a^{z}\partial_{z}.
\ee
 Thus, straightforward computations show that the equations \eqref{MHD} can be expressed in cylindrical coordinate as
\begin{equation}
\label{eq:mhd1}
\begin{cases}
\partial_t  v^r +  v^r \partial_r  v^r+  v^z \partial_z  v^r + \partial_r p=-\frac{(b^\theta)^2}{r},   \\
\partial_t  v^z +  v^r \partial_r  v^z+  v^z \partial_z  v^z + \partial_z p=0, \\
\partial_t  b^\theta +  v^r \partial_r  b^\theta+  v^z \partial_z  b^\theta - \Delta b^\theta=\frac{b^\theta}{r} v^r.    
\end{cases}
\end{equation}
Furthermore, one can easily check that  the vorticity $\omega:=\textnormal{curl}\, v$ of the vector filed $v$ is given by
$$
\omega=\omega^\theta e_\theta;\quad \omega^\theta=\partial_zv^r-\partial_rv^z.
$$
Now,  applying the curl operator to the velocity equation yields
\begin{equation}\label{voticityequation}
\partial_t \omega+v \cdot \nabla \omega=\omega\cdot\nabla v+\textnormal{curl}(b\cdot\nabla b).
\end{equation}
Since $\omega=\omega^\theta e_\theta$ and by \eqref{dotnabla}, the stretching term takes the form
 \begin{align}\label{stretch}
\omega\cdot\nabla v
 &=\omega^\theta\,\frac{ v^r}{r}\, e_\theta 
 = \frac{v^r}{r}\omega.
\end{align} 
Similarly, as $b=b^\theta e_\theta$ we get 
\begin{align}\label{bnablab}
b\cdot\nabla b= -\frac{(b^\theta)^2}{r}e_r\qquad\textnormal{and}\qquad\textnormal{curl}(b\cdot\nabla b)&=-\partial_z\Big(\frac{b^\theta}{r}b\Big).
\end{align}
Consequently the vorticity equation \eqref{voticityequation} becomes
\be\label{eq:vorticity}
\partial_t \omega+v \cdot \nabla \omega=\frac{v^r}{r}\omega-\partial_z\Big(\frac{b^\theta}{r}b\Big).
\ee
It follows that the scalar function  $\omega^\theta$ verifies
\begin{equation}
\label{eq:mhd2}
\partial_t  \omega^\theta +  v \cdot \nabla\omega^\theta=\frac{v^r}{r}\omega^\theta-\partial_z\frac{(b^\theta)^2}{r}.  
\end{equation} 
Moreover, from the third equation of \eqref{eq:mhd1}, one has
\begin{equation}
\label{eq:mhd22}
\partial_t  b^\theta +  v \cdot \nabla  b^\theta - \Delta b^\theta=\frac{b^\theta}{r} v^r .
\end{equation}
Therefore the quantities 
$
\Omega:=\frac{\omega^\theta}{r}$ and  $\Gamma:=\frac{b^\theta}{r}
$
are  governed by the following system
\begin{equation}
\label{eq:mhd3}
\begin{cases}
\partial_t  \Omega +  v \cdot \nabla\Omega=-\partial_z\Gamma^2,   \\
\partial_t  \Gamma + v \cdot \nabla \Gamma - \Big(\Delta +\frac{2}{r}\partial_r\Big)\Gamma =0 . 
\end{cases}
\end{equation}

\noindent The following proposition is proved in \cite{AHK, H}.
\begin{proposition}\label{prop:geo}
Let $v=(v^{1}, v^{2}, v^{3})$ and $b=(b^{1}, b^{2}, b^{3})$ be a smooth axisymmetric vector fields as in \eqref{axiv}. Then we have
\begin{enumerate}[label=\rm(\roman*)]
\item For every  $(x_{1}, x_{2}, z)\in \R^{3}$, 
\begin{align*}
v^{z}=0,\quad -x_{2}v^{1}(x_{1}, x_{2}, z)+x_1 v^{2}(x_{1}, x_{2}, z)&=0,\\
b^{z}=0,\quad \;\;\, x_{1}b^{1}(x_{1}, x_{2}, z)+x_2\, b^{2}(x_{1}, x_{2}, z)&=0,\\
v^{1}(0, x_{2}, z)=v^{2}(x_{1},0, z)=b^{1}(x_{1}, 0, z)=b^{2}(0, x_{2}, z)&= 0.
\end{align*}
\item For every $q\geq -1$, $\Delta_{q}v$ and $\Delta_{q}b$ are axisymmetric and 
$$
(\Delta_{q}v^{1})(0, x_{2}, z)=(\Delta_{q}v^{2})(x_1, 0, z)=0,
$$ 
$$
(\Delta_{q}b^{1})(x_1, 0, z)=(\Delta_{q}b^{2})(0, x_2, z)=0.
$$ 
\item The vector  $\omega=\nabla\times v=(\omega^{1}, \omega^{2}, \omega^{3})$ satisfies $\omega=\nabla\times e_{\theta}=(0, 0, 0)$and we have for every  $(x_{1}, x_{2}, z)\in \R^{3}$, 
$$
\omega^{z}=0,\quad x_{1}\omega^{1}(x_{1}, x_{2}, z)+x_{2}\omega^{2}(x_{1}, x_{2}, z)=0,\quad \omega^{1}(x_{1}, 0, z)=\omega^{2}(0, x_{2}, z)= 0.$$
\end{enumerate}   
\end{proposition}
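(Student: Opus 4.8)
The plan is to prove (i) by passing to cylindrical coordinates, to derive (ii) from (i) by using that the Littlewood--Paley blocks commute with the rotations and reflections preserving the vertical axis, and to obtain (iii) from (i) once the curl of a swirl-free axisymmetric field has been computed.

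For (i), I would expand the Cartesian components of $v$ and $b$ in the moving frame. From \eqref{axiv} together with $e_r=(\cos\theta,\sin\theta,0)^T$, $e_\theta=(-\sin\theta,\cos\theta,0)^T$ and $e_z=(0,0,1)^T$ one reads $v^1=v^r\cos\theta$, $v^2=v^r\sin\theta$, the swirl component of $v$ being zero by assumption, and $b^1=-b^\theta\sin\theta$, $b^2=b^\theta\cos\theta$, $b^3\equiv 0$. Substituting $x_1=r\cos\theta$ and $x_2=r\sin\theta$ turns ``$v$ has no angular part'' and ``$b$ has no radial part'' into the algebraic identities $-x_2v^1+x_1v^2=0$ and $x_1b^1+x_2b^2=0$ for $r>0$, which extend to the axis $\{r=0\}$ by continuity of $v$ and $b$. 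Since $\cos\theta=0$ on $\{x_1=0\}$ one gets $v^1=b^2=0$ there, and since $\sin\theta=0$ on $\{x_2=0\}$ one gets $v^2=b^1=0$ there; this gives all the pointwise identities of (i).

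For (ii), the key point is that, acting componentwise, $\Delta_q$ is a Fourier multiplier whose symbol $\varphi(2^{-q}\xi)$ (and $\chi(\xi)$ for $q=-1$) is a function of $|\xi|$ only, hence convolution against a radial kernel $K_q$. Therefore $\Delta_q$ commutes with the linear actions $w\mapsto\mathcal{R}_{-\alpha}\,w(\mathcal{R}_\alpha\,\cdot)$ of the rotations about $(Oz)$ and $w\mapsto\sigma\,w(\sigma\,\cdot)$ of the reflection $\sigma:(x_1,x_2,z)\mapsto(x_1,-x_2,z)$, since $\mathcal{R}_{-\alpha}$ and $\sigma$ are constant orthogonal matrices and $K_q$ is invariant under orthogonal changes of variable. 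It follows at once that $\Delta_q v$ and $\Delta_q b$ are still axisymmetric. Moreover, within the class of axisymmetric fields, ``$v$ without swirl'' is equivalent to $\sigma\,v(\sigma\,\cdot)=v$ and ``$b$ of the form $b^\theta e_\theta$'' to $\sigma\,b(\sigma\,\cdot)=-b$ --- the reflection through a vertical plane fixing the meridional part and reversing the azimuthal one --- and both relations survive the application of $\Delta_q$. Consequently $\Delta_q v$ is axisymmetric without swirl and $\Delta_q b$ is axisymmetric and purely azimuthal, so part (i) applied to $\Delta_q v$ and $\Delta_q b$ yields $(\Delta_q v^1)(0,x_2,z)=(\Delta_q v^2)(x_1,0,z)=0$ and $(\Delta_q b^1)(x_1,0,z)=(\Delta_q b^2)(0,x_2,z)=0$.

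For (iii), using the cylindrical expression \eqref{nabla} of $\nabla$ and the $\theta$-independence of $v^r$ and $v^z$, a direct computation gives $\omega=\nabla\times v=(\partial_z v^r-\partial_r v^z)\,e_\theta=:\omega^\theta e_\theta$, so $\omega$ is collinear with $e_\theta$ and is an axisymmetric, purely azimuthal vector field of exactly the form handled for $b$ in (i). The remaining identities --- the vanishing of the $e_z$-component of $\omega$, the relation $x_1\omega^1+x_2\omega^2=0$, and $\omega^1(x_1,0,z)=\omega^2(0,x_2,z)=0$ --- then follow verbatim from the $b$-part of (i) with $\omega^\theta$ in place of $b^\theta$. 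The one step that deserves some care in the whole argument is the claim in (ii) that the dyadic truncation preserves not merely axisymmetry but the swirl-free, respectively purely azimuthal, structure: this is exactly where the radiality of the Littlewood--Paley kernels, and hence their commutation with the reflection $\sigma$, is used. One should also keep an eye on continuity up to the symmetry axis $r=0$ when upgrading the identities valid for $r>0$ to those claimed for every $x\in\R^3$.
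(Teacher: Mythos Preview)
The paper does not supply its own proof of this proposition; it merely records the statement and cites \cite{AHK,H}. So there is no in-paper argument to compare against, and the relevant question is simply whether your proof stands on its own.

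It does. Part (i) is an immediate unwinding of \eqref{axiv} in Cartesian coordinates, and your treatment is correct, including the remark that continuity carries the identities across the axis $\{r=0\}$. Part (iii) follows, as you say, once one checks $\omega=\omega^\theta e_\theta$ and then feeds this purely azimuthal field back into the $b$-case of (i). The interesting step is (ii), and your argument there is clean: radiality of the Littlewood--Paley multiplier gives commutation with all orthogonal transformations fixing the origin, in particular with the rotations $\mathcal{R}_\alpha$ about $(Oz)$ and with the reflection $\sigma:(x_1,x_2,z)\mapsto(x_1,-x_2,z)$. Your observation that, among axisymmetric fields, invariance under $w\mapsto\sigma\,w(\sigma\,\cdot)$ singles out the swirl-free ones while anti-invariance singles out the purely azimuthal ones is exactly what is needed to conclude that $\Delta_q v$ and $\Delta_q b$ retain the structure \eqref{axiv}, after which (i) applies verbatim. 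This reflection device is arguably more transparent than the direct computations one finds in the cited references, where the vanishing of $(\Delta_q v^1)(0,x_2,z)$ etc.\ is typically obtained by writing out the convolution integral and exploiting parity of the radial kernel. Both routes are short; yours has the advantage of making the structural reason explicit.

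One cosmetic point: the statement as printed in the paper contains evident typos (the stray ``$v^z=0$'' in (i) and the garbled ``$\omega=\nabla\times e_\theta=(0,0,0)$'' in (iii), which should read $\omega\times e_\theta=0$). You have, sensibly, proved the intended statement rather than the misprinted one.
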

Now, we aim to study the following  vorticity like equation,  in which no relations between the vector field $v$ and the solution $\zeta$ are supposed 
\begin{equation} \label{m2}
\left\{ \begin{array}{lll}
\partial_{t}\zeta+(v\cdot\nabla)\zeta=(\zeta\cdot\nabla)v+\textnormal{curl}(b\cdot\nabla b),\\
\textnormal{div}v=0,\\
\zeta_{\mid t=0}=\zeta_{0},
\end{array} \right.
\end{equation}
where $v$ and $b$ are axisymmetric and verify \eqref{axiv}  and  $\zeta=(\zeta^{1}, \zeta^{2}, \zeta^z)$ is an unknown vector field. 
\begin{proposition}\label{prop:geo2}
Let $v$ be a smooth, divergence free, axisymmetric vector field and $\zeta$ be the unique global solution of \eqref{m2} with smooth initial data $\zeta_0$. Then we have the following properties.
 \begin{enumerate}[label=\rm(\roman*)]
\item If $\textnormal{div }\zeta_{0}=0,$ then for all $t\in\mathbb{R}_{+}$,
$
\textnormal{div }\zeta(t)=0.
$
\item If $\zeta_{0}\times \vec{e}_{\theta}= 0$ then for all $t\in\mathbb{R}_{+}$,
$
\zeta(t,x)\times \vec{e}_{\theta}=(0,0,0).
$
Consequently, 
$$\zeta_{1}(t, x_{1}, 0, z)=\zeta_{2}(t, 0, x_{2}, z)=0\quad  and
\quad
(\zeta\cdot\nabla)v=\frac{v^{r}}{r}\zeta.
$$
 \end{enumerate}
\end{proposition}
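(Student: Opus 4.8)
The plan is to prove each of the two structural properties by propagating them along the flow generated by $v$, treating \eqref{m2} as a transport-type system whose linear structure preserves the relevant algebraic constraints.

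\textbf{Part (i): preservation of the divergence-free condition.} First I would take the divergence of the first equation in \eqref{m2}. Since $\textnormal{div}\,v=0$, one has $\textnormal{div}(v\cdot\nabla\zeta)=v\cdot\nabla(\textnormal{div}\,\zeta)$ (using that the extra terms $\partial_i v^j\,\partial_j\zeta^i$ rearrange into $\textnormal{div}((\zeta\cdot\nabla)v)$ up to the same combination), and $\textnormal{div}\,\textnormal{curl}(b\cdot\nabla b)=0$ identically. The cleanest route is to compute $\textnormal{div}\big((v\cdot\nabla)\zeta - (\zeta\cdot\nabla)v\big)$; writing both terms in coordinates, the second-derivative terms in $\zeta$ and $v$ cancel pairwise and what remains is $v\cdot\nabla(\textnormal{div}\,\zeta) - (\textnormal{div}\,\zeta)\,(\textnormal{div}\,v) = v\cdot\nabla(\textnormal{div}\,\zeta)$. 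Hence $d:=\textnormal{div}\,\zeta$ solves the pure transport equation $\partial_t d + v\cdot\nabla d = 0$ with $d|_{t=0}=\textnormal{div}\,\zeta_0=0$, so $d\equiv 0$ for all $t$ by uniqueness for the transport equation along the (smooth) flow of $v$.

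\textbf{Part (ii): preservation of $\zeta\times e_\theta=0$.} Note $\zeta\times e_\theta = 0$ is equivalent to $\zeta$ having only an angular component, i.e. $\zeta = \zeta^\theta e_\theta$, which in Cartesian coordinates means precisely $x_1\zeta^1 + x_2\zeta^2 = 0$ and $\zeta^z = 0$ (the same algebraic relations as for $\omega$ in Proposition~\ref{prop:geo}(iii)). The strategy is to show that the two scalar quantities $\zeta^z$ and $\mathfrak p := x_1\zeta^1 + x_2\zeta^2$ satisfy transport-type equations with zero source when the initial constraint holds. For this I would use that $v$ is axisymmetric without swirl, so $v=v^re_r+v^ze_z$, and that $\textnormal{curl}(b\cdot\nabla b) = -\partial_z\big(\tfrac{b^\theta}{r}b\big)$ by \eqref{bnablab}; since $b=b^\theta e_\theta$ this forcing term is itself purely angular (it is $-\partial_z\Gamma^2\,(\text{something})\,e_\theta$ type, with no $e_r,e_z$ component), so it does not feed the $\zeta^z$ or $\mathfrak p$ components. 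Projecting the equation onto $e_z$ and onto the ``radial-plane'' combination, and using the stretching identity — which I can bootstrap: once we know $\zeta=\zeta^\theta e_\theta$, formula \eqref{dotnabla} gives $(\zeta\cdot\nabla)v = \tfrac{\zeta^\theta}{r}\partial_\theta v = \tfrac{v^r}{r}\zeta$ exactly as in \eqref{stretch} — one finds that $\zeta^z$ and $\mathfrak p$ each solve a linear equation of the form $\partial_t f + v\cdot\nabla f = a(t,x) f$ with a smooth coefficient $a$ coming from the $\partial v$ terms. Gronwall/uniqueness along the flow then forces $f\equiv 0$ from $f|_{t=0}=0$. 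The consequences $\zeta^1(t,x_1,0,z)=\zeta^2(t,0,x_2,z)=0$ follow by evaluating $x_1\zeta^1+x_2\zeta^2=0$ on the coordinate planes (exactly as in Proposition~\ref{prop:geo}), and $(\zeta\cdot\nabla)v=\tfrac{v^r}{r}\zeta$ is the stretching identity just derived.

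\textbf{Main obstacle.} The delicate point is the rigorous handling of the apparent singularity at the axis $r=0$ in the stretching term $\tfrac{v^r}{r}\zeta$ and in working with cylindrical components, plus making sure the algebraic projection onto $e_z$ and $x_1\zeta^1+x_2\zeta^2$ genuinely decouples — i.e. verifying that no $e_r$- or $e_z$-component of $(\zeta\cdot\nabla)v$ or of $\textnormal{curl}(b\cdot\nabla b)$ is generated when the constraint holds, which is where the no-swirl hypothesis on $v$ and the special form \eqref{axiv} of $b$ are essential. I would circumvent the axis issue by arguing entirely with the smooth Cartesian vector field $\zeta$ and the smooth quantities $\zeta^z$, $x_1\zeta^1+x_2\zeta^2$ (which vanish to the right order near the axis precisely because of Proposition~\ref{prop:geo}), deferring the division by $r$ to the very end once angular-only structure is established; alternatively one can differentiate $|\zeta - \zeta^\theta e_\theta|^2$ or an equivalent scalar and close a Gronwall estimate directly.
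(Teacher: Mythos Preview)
Your approach is essentially the paper's. For (i) the paper performs exactly your divergence computation and closes with the transport equation for $\textnormal{div}\,\zeta$. For (ii) the paper works with the cylindrical components $\zeta^r,\zeta^z$ directly (your $\mathfrak p$ is just $r\zeta^r$): projecting \eqref{m2} onto $e_r$ and $e_z$, and using that $\textnormal{curl}(b\cdot\nabla b)$ is purely angular, it obtains
\[
\partial_t\zeta^r+v\cdot\nabla\zeta^r=\zeta^r\partial_rv^r+\zeta^z\partial_zv^r,\qquad
\partial_t\zeta^z+v\cdot\nabla\zeta^z=\zeta^r\partial_rv^z+\zeta^z\partial_zv^z,
\]
and closes by $L^\infty$–Gronwall on the pair.

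Two small points in your write-up are worth tightening. First, the equations for $\zeta^z$ and $\mathfrak p$ (or $\zeta^r$) form a \emph{coupled} $2\times2$ linear system, not two independent scalar equations of the form $\partial_tf+v\cdot\nabla f=af$; the Gronwall still works, of course, but on the sum $\|\zeta^r\|_{L^\infty}+\|\zeta^z\|_{L^\infty}$. Second, you should not invoke the identity $(\zeta\cdot\nabla)v=\tfrac{v^r}{r}\zeta$ when deriving these equations---that identity is the \emph{conclusion} and only holds once $\zeta^r=\zeta^z=0$. The correct step (which the paper does) is to compute $(\zeta\cdot\nabla v)\cdot e_r$ and $(\zeta\cdot\nabla v)\cdot e_z$ directly for a general $\zeta$; the $\zeta^\theta$ contribution drops out because $\tfrac{1}{r}\partial_\theta v=\tfrac{v^r}{r}e_\theta$ is orthogonal to both $e_r$ and $e_z$, which is precisely where the no-swirl hypothesis on $v$ enters.
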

\begin{proof}
 We apply the divergence operator to the equation \eqref{m2}, we get  
$$
\partial_t(\textnormal{div}\zeta)+\textnormal{div}(v\cdot\nabla \zeta)= \textnormal{div}(\zeta\cdot\nabla v),
$$
where we have used the fact that $ \textnormal{div}(\textnormal{curl}\, a)=0$  for any vector $a\in\R^3$. Straightforward computations allows to get 
\begin{eqnarray*}
\textnormal{div}(v\cdot\nabla \zeta) - \textnormal{div}(\zeta\cdot\nabla v)=v\cdot\nabla\textnormal{div}\zeta.
\end{eqnarray*}
Thus we obtain the equation
\begin{equation*}
\partial_{t}\textnormal{div}\zeta+(v\cdot\nabla)\textnormal{div}\zeta=0.
\end{equation*}
Then, by the maximum principle we conclude  (i). 

\smallskip

Next, we shall prove (ii).  Denote by $(\zeta^{r},\zeta^\theta,\zeta^z)$ the coordinates of  $\zeta$ in cylindrical basis.  Then
$$
\zeta\times\vec{e}_{\theta}=\zeta^r {e}_{z}-\zeta^z {e}_{r}
$$
Hence it suffices to prove that $\zeta^r=\zeta^z=0$. Taking the scalar product of \eqref{m2} with $e_r$ we get
$$
\partial_t \zeta^r+ (v \cdot \nabla \zeta). e_r=(\zeta\cdot\nabla v). e_r+\textnormal{curl}(b\cdot\nabla b)\cdot  e_r.
$$
From \eqref{dotnabla}  and  since $\partial_{r}{e}_{r}=\partial_{z}{e}_{r}=0$, we get
\begin{eqnarray*}
(v\cdot\nabla\zeta).{e_{r}}&=&( v^{r}\partial_{r}\zeta+v^{3}\partial_{z}\zeta)\cdot {e_{r}}\\ &=& (v^{r}\partial_{r}+v^{3}\partial_{z})(\zeta \cdot  {e_{r}})\\ &=& v\cdot\nabla \zeta^{r},
\end{eqnarray*}
Similarly we obtain
\begin{eqnarray*}
(\zeta\cdot\nabla v).{e_{r}}&=& \big(\zeta^{r}\partial_{r}v+\frac{1}{r}\zeta^{\theta}\partial_{z}v+\zeta^z\partial_{3}v\big). {e}_{r}= \zeta^{r}\partial_{r}+\zeta^z\partial_{z}v^{r}.
\end{eqnarray*}
On the other hand, by \eqref{axiv} and  \eqref{bnablab} we have
\begin{eqnarray*}
\textnormal{curl}(b\cdot\nabla b). e_r&=0.
\end{eqnarray*}
Thus $\zeta^{r}$ verifies the equation 
\begin{equation*}
\partial_{t}\zeta^{r}+v\cdot\nabla\zeta^{r}=\zeta^{r}\partial_{r}v^{r}+\zeta^z\partial_{z}v^{r}.
\end{equation*}
By the maximum principle we get
\begin{eqnarray*}
\Vert\zeta^{r}(t)\Vert_{L^{\infty}}
&\leq &\int_{0}^{t}\big(\Vert\zeta^{r}(\tau)\Vert_{L^{\infty}}+\Vert\zeta^z(\tau)\Vert_{L^{\infty}}\big)\Vert\nabla v(\tau)\Vert_{L^{\infty}}d\tau.
\end{eqnarray*}
In a similar way  we may check that
\begin{equation*}
\partial_{t}\zeta^z+v\cdot\nabla\zeta^z=\zeta^{r}\partial_{r}v^{3}+\zeta^z\partial_{z}v^{3}.
\end{equation*}
Therefore, we have
\begin{equation*}
\Vert\zeta^z(t)\Vert_{L^{\infty}}\leq\int_{0}^{t}\big(\Vert\zeta^{r}(\tau)\Vert_{L^{\infty}}+\Vert\zeta^z(\tau)\Vert_{L^{\infty}}\big)\Vert\nabla v(\tau)\Vert_{L^{\infty}}d\tau.
\end{equation*}
Combining the two last estimates with  Gronwall’s inequality, we obtain for every $t\in \R+$,
$$
\zeta^{r}(t)=\zeta^z(t)=0.
$$
It follows that
$
\zeta\times {e}_{\theta}=0
$
and the the stretching term becomes
\begin{eqnarray*}
\zeta\cdot\nabla v
 &=&\zeta^\theta\,\frac{ v^r}{r}\, e_\theta 
 =\frac{v^r}{r}\zeta.
\end{eqnarray*} 
The proof of the Proposition is now complete.\qed
\end{proof}

\section{A priori estimates}
\quad In this section we shall establish some global a priori estimates needed for the proof of Theorem \ref{thm:sub-critical} and Theorem \ref{thm:critical}. First,  we shall  prove some basic weak estimates that can be easily obtained  through energy type estimates. In a second step, we shall give a global estimate on  $\|{\omega}/{r}\|_{L_t^\infty L^{3,1}}$ and finally we intend to show the control of some stronger norms such as $\|{\omega}\|_{L_t^\infty L^\infty}$ and $\|b \|_{L_t^1 B^0_{\infty,1}}$. 

\smallskip

Recall that we always denote
$$\Phi_{l}(t)=C_{0}\underbrace{\exp(...\exp}_{l-times}(C_{0} t^{\frac54})...),$$
where $C_0$ depends on the involved norms of the initial data and its value may vary from line to line
up to some absolute constants.

\smallskip

We start with the following elementary  estimates.
\begin{proposition}\label{prop:energy}
Let $(v,b)$ be a smooth solution of \eqref{MHD} then 
\begin{enumerate}[label=\rm(\roman*)]
\item For $v_0,b_0\in L^2$ and $t\in\R_+$  we have  the energy law of the three-dimensional incompressible MHD equations
\begin{align}\label{energ-est}
\Vert v\Vert_{L_t^\infty L^2}^2+\Vert b\Vert_{L_t^\infty L^2}^2+2\Vert \nabla b\Vert_{L_t^2 L^2}^2= \Vert v_0\Vert_{L^2}^2+\Vert b_0\Vert_{L^2}^2.
\end{align}
\item Assume that  $(v,b)$ is an axisymmetric solution of  \eqref{MHD}. Then, for all $t\in\R_+$  we have
\begin{align}\label{lpgamma2}
\Big\Vert \frac{b^\theta}{r}\Big\Vert_{L_t^\infty L^2}^2+\Big\Vert\nabla \Big(\frac{b^\theta}{r}\Big)\Big\Vert_{L_t^2L^2}^2\leq \Big\Vert \frac{b^\theta_0}{r}\Big\Vert_{L^2}^2. 
\end{align}
In addition,   for all $p\in(1,\infty],\, q\in [1,\infty]$ and  $t\in\R_+$,
\begin{align}\label{lpgamma}
 \Big\Vert \frac{b^\theta}{r}\Big\Vert_{L_t^\infty L^{p,q}}\leq \Big\Vert \frac{b^\theta_0}{r}\Big\Vert_{L^{p,q}}.
\end{align}
\item  Assume that  $(v,b)$ is an axisymmetric solution of  \eqref{MHD} such that  $v_0,b_0\in L^2$,  and  ${b^\theta_0}/{r}\in L^m$, $m>6$. Then  for all  $t\in\R_+$ we have 
\begin{equation}\label{boundbp}
\Vert b(t)\Vert_{L^\infty}\leq  C_0\big(t^{-\frac34}+t^{\frac14}\big), 
\end{equation}
and
\begin{equation}\label{l1tb}
\Vert b\Vert_{L_t^1L^p}\leq C_0(t^\frac14+t^\frac54) \quad  \textnormal{for all} \quad p\in(2,\infty].
\end{equation}

\end{enumerate}

\end{proposition}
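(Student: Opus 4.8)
The three items rely on three different exact identities of the axisymmetric system, and I would establish them in the order stated. For item (i) the plan is the classical energy balance: test the momentum equation of \eqref{MHD} against $v$ and the induction equation against $b$ in $L^2(\R^3)$. Since $\mathrm{div}\,v=0$, the pressure term and the transport terms $v\cdot\nabla v$ and $v\cdot\nabla b$ integrate to zero, and $-\int\Delta b\cdot b=\|\nabla b\|_{L^2}^2$; the only remaining contributions $\int(b\cdot\nabla b)\cdot v$ and $\int(b\cdot\nabla v)\cdot b$ are exact opposites after one integration by parts using $\mathrm{div}\,b=0$. Adding the two identities and integrating in time yields \eqref{energ-est}; nothing is delicate here.

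For item (ii) I would use the second equation of \eqref{eq:mhd3}, namely $\partial_t\Gamma+v\cdot\nabla\Gamma=(\Delta+\tfrac2r\partial_r)\Gamma$ with $\Gamma=b^\theta/r$. Multiplying by $|\Gamma|^{p-2}\Gamma$ and integrating over $\R^3$: the transport term drops ($\mathrm{div}\,v=0$); the term $\int(\Delta\Gamma)|\Gamma|^{p-2}\Gamma\,dx=-(p-1)\int|\Gamma|^{p-2}|\nabla\Gamma|^2\,dx$ is $\le 0$; and for the singular drift I would write $\int\tfrac2r(\partial_r\Gamma)|\Gamma|^{p-2}\Gamma\,dx=\tfrac2p\int\tfrac1r\partial_r(|\Gamma|^p)\,dx$ and pass to cylindrical coordinates $dx=r\,dr\,d\theta\,dz$, so that this equals $-\tfrac{4\pi}p\int_{\R}|\Gamma(0,z)|^p\,dz\le 0$ (the boundary value $\Gamma(0,z)$ being finite because a smooth axisymmetric field vanishes on the symmetry axis, cf. Proposition \ref{prop:geo}, and the interior decay giving the contribution at $r=\infty$). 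Hence $\tfrac{d}{dt}\|\Gamma\|_{L^p}^p\le0$ for every $p\in[1,\infty)$, so $\|\Gamma(t)\|_{L^p}\le\|\Gamma_0\|_{L^p}$; letting $p\to\infty$ covers $p=\infty$, and retaining the gradient term when $p=2$ gives \eqref{lpgamma2}. Finally, since $\Gamma_0\mapsto\Gamma(t)$ is a \emph{linear} map that is a contraction on $L^{p_0}$ and on $L^{p_1}$ for any $1\le p_0<p<p_1\le\infty$, and $L^{p,q}=(L^{p_0},L^{p_1})_{\theta,q}$, the interpolation property yields \eqref{lpgamma}.

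Item (iii) is the main point. The plan is to treat $b$ as a Cartesian vector field: by \eqref{axiv}--\eqref{bnablab} one has $b\cdot\nabla v=\tfrac{v^r}r b$, so each component solves $\partial_t b^i+v\cdot\nabla b^i-\Delta b^i=\tfrac{v^r}r b^i$ ($i=1,2,3$) with the usual Laplacian. Because $b=b^\theta e_\theta=r\Gamma e_\theta$, the source is the regular quantity $\tfrac{v^r}r b^i=v^r\Gamma\,(e_\theta)^i$, and H\"older's inequality gives $\big\|\tfrac{v^r}r b\big\|_{L^q}\le\|v\|_{L^2}\|\Gamma\|_{L^m}$ with $\tfrac1q=\tfrac12+\tfrac1m$. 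For $m>6$ one checks $q\in(\tfrac32,2)$ and $\tfrac3q=\tfrac32+\tfrac3m<2$, which is precisely the admissibility condition of Proposition \ref{prop:disp} for the source term; combined with $\|v\|_{L^\infty_tL^2}\le C_0$ from item (i) and $\|\Gamma\|_{L^\infty_tL^m}\le\|\Gamma_0\|_{L^m}$ from \eqref{lpgamma}, the source lies in $L^\infty_TL^q$ with norm $\le C_0$. Applying Proposition \ref{prop:disp} to each $b^i$ with $f_0=b_0\in L^2$ and $F=0$ then gives $\|b(T)\|_{L^\infty}\le C(1+T^{-3/4})\|b_0\|_{L^2}+C(1+T^{1/4-3/(2m)})\big\|\tfrac{v^r}r b\big\|_{L^\infty_TL^q}$, and since $t^{-3/4}+t^{1/4}\gtrsim1$ on $(0,\infty)$ this collapses to \eqref{boundbp}. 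Finally \eqref{l1tb} follows by interpolating $\|b(\tau)\|_{L^p}\le\|b(\tau)\|_{L^2}^{2/p}\|b(\tau)\|_{L^\infty}^{1-2/p}$ (using the $L^2$ bound of item (i)) and integrating the two resulting powers $\tau^{-\frac34(1-2/p)}$ and $\tau^{\frac14(1-2/p)}$; the first exponent lies in $(-\tfrac34,0)$ so the integral converges at $0$, and the antiderivatives evaluated at $t$ carry exponents in $[\tfrac14,1)$ and $(1,\tfrac54]$, both dominated by $t^{1/4}+t^{5/4}$ on $(0,\infty)$.

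The two places that require care are the favorable sign of the singular-drift term in item (ii) --- this is exactly what makes $\Gamma$ behave as a subsolution of a heat-type equation, so that the magnetic diffusion tames the stretching --- and, in item (iii), the bookkeeping of the integrability threshold: $m>6$ is precisely what places the source $v^r\Gamma$ in $L^\infty_tL^q$ with $\tfrac3q<2$, so that Proposition \ref{prop:disp} applies; a smaller exponent would break the argument. Everything else is routine energy estimates and interpolation.
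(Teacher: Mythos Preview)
Your proposal is correct and follows essentially the same approach as the paper's proof: the standard energy cancellation for (i); multiplying the $\Gamma$-equation by $|\Gamma|^{p-2}\Gamma$ and exploiting the favorable sign of both the diffusion and the singular drift $\tfrac{2}{r}\partial_r$ for (ii), then interpolating to Lorentz spaces; and for (iii) applying the Nash--De~Giorgi estimate of Proposition~\ref{prop:disp} to the Cartesian equation $\partial_t b+v\cdot\nabla b-\Delta b=\tfrac{v^r}{r}b$ with source in $L^\infty_t L^{2m/(m+2)}$, followed by interpolation between $L^2$ and $L^\infty$. Your write-up is in fact more explicit than the paper's on the boundary-term computation in (ii) and on why $m>6$ is the sharp threshold in (iii), but the argument is the same.
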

\begin{proof}
 By taking the $L^2$-scalar product of the first equation of \eqref{MHD} with $v$ and integrating
by parts we get, since $v$ and $b$ are divergence free, 
$$
\frac12\frac{d}{dt}\Vert v(t)\Vert_{L^2}^2=-\int_{\R^3}(b\cdot\nabla v)\cdot b \,dx.
$$
Making a similar energy estimate on the second  equation of \eqref{MHD}  yields
$$
\frac12\frac{d}{dt}\Vert b(t)\Vert_{L^2}^2+\int_{\mathbb{R}^3}|\nabla b(t,x)|^2dx=\int_{\R^3}(b\cdot\nabla v)\cdot  b \, dx.
$$
Summing the two last identities gives
$$
\frac12\frac{d}{dt}\Vert v(t)\Vert_{L^2}^2+\frac12\frac{d}{dt}\Vert b(t)\Vert_{L^2}^2+\int_{\mathbb{R}^3}|\nabla b(t,x)|^2dx=0.
$$
Integrating in time gives the desired result in (i).

\smallskip

Recall that the evolution of the quantity  $\Gamma:=b^\theta/r$ is given by
\begin{equation*}
\partial_t  \Gamma + v \cdot \nabla \Gamma - \Big(\Delta +\frac{2}{r}\partial_r\Big)\Gamma=0.
\end{equation*}
Multiplying the last equation  by $\Gamma$ an integrating in space  we get
\begin{align*}
&\frac12\frac{d}{dt}\Vert \Gamma(t)\Vert_{L^2}^2+\Vert\nabla \Gamma(t)\Vert_{L^2}^2+2\pi\int_{\mathbb{R}}|\Gamma(t,0,z)|dz=0.
\end{align*}
 Similarly,  
  for all $p\in[1,\infty]$ we have
\be\label{normgammalp}
\Vert\Gamma(t)\Vert_{L^p}\leq \Vert\Gamma_0\Vert_{L^p}.
\ee 
By interpolation , for  $p\in ]1,\infty[$ and $q\in [1,\infty]$, we conclude (ii).

\smallskip

Now we shall prove (iii). Since $b=b^\theta e_\theta$ then  from \eqref{eq:mhd22}   we have
\begin{equation}
\label{eq:onb}
\partial_t  b +  v \cdot \nabla  b - \Delta b=\frac{b}{r} v^r .
\end{equation} 
Applying Proposition \ref{prop:disp} to   \eqref{eq:onb} with $F=0$ and $G=\frac{b}{r}v_r\,$ we get, for all $m>6$,
\begin{align*}
\Vert b(t)\Vert_{L^\infty}&\leq C(1+t^{-\frac{3}{4}})\Vert b_0\Vert_{L^{2}}+C(1+t^{(\frac14-\frac{3}{2m})})\Big\Vert v^r\, \frac{b}{r}\Big\Vert_{L^{\infty}_tL^{\frac{2m}{m+2}}}
\\ & \leq C(1+t^{-\frac{3}{4}})\Vert b_0\Vert_{L^{2}}+C(1+t^{(\frac14-\frac{3}{2m})})\Vert v\Vert_{L^{\infty}_tL^{2}}\Big\Vert \frac{b^\theta}{r}\Big\Vert_{L^{\infty}_tL^{m}}.
\end{align*}
Then, by \eqref{energ-est} and \eqref{lpgamma} we conclude that
\begin{equation}\label{boundinfty}
\Vert b(t)\Vert_{L^\infty}\leq  C\big(\Vert b_0\Vert_{L^2}+\Vert v_0\Vert_{L^2}\big)\Big(1+\Big\Vert \frac{b^\theta_0}{r}\Big\Vert_{L^m}\Big)\big(t^{-\frac34}+t^{\frac14}\big).
\end{equation}  
 The estimate \eqref{l1tb}   can be immediately obtained by using an interpolation inequality in the $L^p$ spaces, with $p\in (2,\infty)$, the dispersive estimate \eqref{boundinfty},  the energy estimate \eqref{energ-est} and finally integration in time. 
 \qed
\end{proof}

\vspace{0.2cm}

The following Proposition provides the main important quantities that one should estimate in order to get the global existence of smooth solutions.

\begin{proposition}\label{prop:gamma}
Let $v_0, b_0\in L^2$ be  two smooth axisymmetric vector fields with zero divergence such that  ${\omega_0^\theta }/{r}\in L^{3,1}$ and    ${b_0^\theta}/{r}\in L^2\cap L^{\infty}$. Then we have for every   $t\in\mathbb{R}_+$,
\begin{equation*}
\Big\Vert \frac{v^r}{r}\Big\Vert_{L^\infty_tL^\infty}+\Big\Vert \frac{\omega}{r}\Big\Vert_{L^\infty_tL^{3,1}}+\Vert \Gamma\Vert_{L^1_t B_{2,1}^{\frac32}}\leq \Phi_1(t).
\end{equation*}
\end{proposition}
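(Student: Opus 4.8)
The plan is to treat the three norms in the statement together, via a single Gronwall argument on
$$X(t):=\Big\Vert\frac{\omega}{r}\Big\Vert_{L^\infty_tL^{3,1}}=\Vert\Omega\Vert_{L^\infty_tL^{3,1}}\qquad(\Omega=\omega^\theta/r),$$
showing that every other quantity that shows up is controlled either by $X$ or by the $v$‑independent estimates of Proposition \ref{prop:energy}, and that $X$ itself obeys $X(t)\le\Phi_1(t)$. The first ingredient is the classical Biot--Savart bound for axisymmetric divergence‑free fields without swirl,
$$\Big\Vert\frac{v^r}{r}\Big\Vert_{L^\infty}\lesssim\Big\Vert\frac{\omega^\theta}{r}\Big\Vert_{L^{3,1}},$$
which holds because the axisymmetric Biot--Savart law writes $v^r/r$ as the convolution of $\omega^\theta/r$ with a kernel of homogeneity $-2$ (hence in weak $L^{3/2}$), and the Lorentz--Young (O'Neil) inequality together with \eqref{lor} applies. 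This already reduces $\Vert v^r/r\Vert_{L^\infty_tL^\infty}$ to $X(t)$, and it also controls the divergence of the lifted drift needed below.

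Next, since $\textnormal{div}\,v=0$ the flow of $v$ preserves Lebesgue measure, hence preserves every Lorentz norm; so the transport equation for $\Omega$ in \eqref{eq:mhd3} gives, for all $t$,
$$\Vert\Omega(t)\Vert_{L^{3,1}}\le\Vert\Omega_0\Vert_{L^{3,1}}+\int_0^t\big\Vert\partial_z\Gamma^2(\tau)\big\Vert_{L^{3,1}}\,d\tau.$$
Writing $\partial_z\Gamma^2=2\Gamma\,\partial_z\Gamma$, using the Lorentz--Hölder inequality \eqref{lor}, the maximum principle $\Vert\Gamma\Vert_{L^\infty_tL^\infty}\le\Vert\Gamma_0\Vert_{L^\infty}$ from Proposition \ref{prop:energy}(ii), and the embedding $B_{2,1}^{1/2}\hookrightarrow L^{3,1}$ (this is \eqref{emlb} with $p=2$) applied to $\partial_z\Gamma$, one gets $\Vert\partial_z\Gamma^2\Vert_{L^{3,1}}\le C\Vert\Gamma_0\Vert_{L^\infty}\Vert\Gamma\Vert_{B_{2,1}^{3/2}}$, so that $X(t)\le\Vert\Omega_0\Vert_{L^{3,1}}+C\Vert\Gamma_0\Vert_{L^\infty}\Vert\Gamma\Vert_{L^1_tB_{2,1}^{3/2}}$. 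Everything now rests on proving $\Vert\Gamma\Vert_{L^1_tB_{2,1}^{3/2}}\le\Phi_1(t)$.

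This last bound is the heart of the argument: one must gain two derivatives from the parabolic part of the $\Gamma$‑equation \emph{without} any quantitative control of $\nabla v$, which is precisely what one hopes to bound afterwards. I would do a Littlewood--Paley energy estimate after passing to the five‑dimensional lift $\widetilde\Gamma(x_h,z)=\Gamma(|x_h|,z)$, $x_h\in\R^4$, for which $\Delta+\frac{2}{r}\partial_r$ becomes the genuine Laplacian $\Delta_{\R^5}$ and the advecting field lifts to one with divergence $\frac{2v^r}{r}\in L^\infty$ (controlled by $X$). The block $\Delta_{-1}\Gamma$ is harmless, $\Vert\Delta_{-1}\Gamma\Vert_{L^1_tL^2}\le t\Vert\Gamma_0\Vert_{L^2}$ by Proposition \ref{prop:energy}(ii); for the high blocks the dissipation supplies a coercive gain $c\,2^{2q}$ on each dyadic piece, while the transport term is absorbed by an axisymmetric commutator estimate in the spirit of Proposition \ref{prop:commu2}, whose right‑hand side has the form $\Vert\omega^\theta/r\Vert_{L^{3,1}}\big(\Vert b\Vert_{L^6}+\Vert\Gamma\Vert_{L^2}\big)$ --- controlled by $X(t)$ and by the \emph{drift‑independent} dispersive bounds \eqref{boundbp}--\eqref{l1tb} (themselves consequences of the De Giorgi--Nash estimate, Proposition \ref{prop:disp}), which give $\Vert b\Vert_{L^1_tL^6}\le C_0(t^{1/4}+t^{5/4})$. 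Multiplying by $e^{c2^{2q}t}$, integrating, weighting by $2^{3q/2}$ and summing in $q$ --- a Cauchy--Schwarz in the frequency variable turning the $L^2$ datum for $\Gamma_0$ into $\ell^1$‑summability at the index $3/2$, thanks to the leftover factor $2^{-q/2}$ from the heat gain --- and finally transferring back to the three‑dimensional norm of $\Gamma$ through the axisymmetric product laws of the appendix, one should arrive at $\Vert\Gamma\Vert_{L^1_tB_{2,1}^{3/2}}\le C_0(1+t^{5/4})+C_0\int_0^t\big(1+\Vert b(\tau)\Vert_{L^6}\big)X(\tau)\,d\tau$. Inserting this into the bound for $X$ and invoking Gronwall's lemma with $\Vert b\Vert_{L^1_tL^6}\le C_0(t^{1/4}+t^{5/4})$ gives $X(t)\le C_0(1+t^{5/4})\exp\!\big(C_0(t+t^{1/4}+t^{5/4})\big)\le C_0\exp(C_0t^{5/4})=\Phi_1(t)$; feeding this back (and using $\int_0^t\Phi_1(\tau)\,d\tau\le\Phi_1(t)$) bounds $\Vert v^r/r\Vert_{L^\infty_tL^\infty}$ and $\Vert\Gamma\Vert_{L^1_tB_{2,1}^{3/2}}$ by $\Phi_1(t)$ as well.

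I expect the only genuine difficulty to be the smoothing estimate of the third step: the first‑order terms $v\cdot\nabla\Gamma$ and $\frac{2}{r}\partial_r\Gamma$ resist a crude $L^2$‑energy treatment, which would cost $\Vert\nabla v\Vert_{L^\infty}$ --- unavailable at this stage --- so one has to exploit the axisymmetric geometry twice: through a commutator estimate whose constant does not see $v$ (Proposition \ref{prop:commu2}) and through the De Giorgi--Nash bound whose constant also does not see $v$ (Proposition \ref{prop:disp}), leaving only the benign feedback through $\Vert\omega/r\Vert_{L^{3,1}}$ itself. The five‑dimensional reduction is what makes the zeroth‑order‑looking term $\frac{2}{r}\partial_r\Gamma$ and the boundary contributions on the axis manifestly tractable.
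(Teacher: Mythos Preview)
Your overall architecture --- Biot--Savart to reduce $\|v^r/r\|_{L^\infty}$ to $\|\Omega\|_{L^{3,1}}$, the $L^{3,1}$ transport estimate on $\Omega$ with forcing $\partial_z\Gamma^2$, the smoothing bound on $\|\Gamma\|_{L^1_tB^{3/2}_{2,1}}$, and the closing Gronwall using $\|b\|_{L^1_tL^6}\le C_0(t^{1/4}+t^{5/4})$ from Proposition~\ref{prop:energy}(iii) --- matches the paper exactly. The only substantive difference is in how you propose to obtain the smoothing estimate on $\Gamma$.

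The paper does \emph{not} lift to $\R^5$. It stays in $\R^3$, localizes the $\Gamma$-equation directly to get
\[
\partial_t\Gamma_q+v\cdot\nabla\Gamma_q-\Big(\Delta+\frac{2}{r}\partial_r\Big)\Gamma_q=-[\Delta_q,v\cdot\nabla]\Gamma,
\]
multiplies by $|\Gamma_q|^{p-2}\Gamma_q$, and simply \emph{discards} the singular drift term using its sign: an integration by parts in $r$ gives $-\int_{\R^3}\frac{2}{r}(\partial_r\Gamma_q)\,|\Gamma_q|^{p-2}\Gamma_q\,dx\ge 0$. The commutator on the right is then bounded, in $\R^3$, by Proposition~\ref{prop:commu2} with $u=\Gamma$ (so that $x_h\Gamma=b^\theta\,e_\theta$ and $\|x_h\Gamma\|_{L^6}=\|b^\theta\|_{L^6}$), yielding precisely the estimate $\|\Gamma\|_{L^1_tB^{3/2}_{2,1}}\lesssim(1+t)\|\Gamma_0\|_{L^2}+\int_0^t\|\Omega\|_{L^{3,1}}(\|b^\theta\|_{L^6}+\|\Gamma_0\|_{L^2})\,d\tau$ that you are aiming for.

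Your five-dimensional route has a real gap. The $L^p(\R^5)$ and $L^p(\R^3)$ norms of an axisymmetric function differ by a weight $r^2$, so a $5$D Besov bound on $\widetilde\Gamma$ does not translate to a $3$D Besov bound on $\Gamma$ without a separate argument; the appendix (Proposition~\ref{prop:app}) concerns the product $\frac{b^\theta}{r}b$ and has nothing to do with this transfer. You would also need a $5$D analogue of Proposition~\ref{prop:commu2} (that proposition is proved in $\R^3$), and the nonzero divergence $2v^r/r$ of the lifted drift introduces an extra Gronwall factor you have not tracked. None of this is fatal in principle, but it is substantially more work than the paper's one-line sign observation, which eliminates the troublesome $\frac{2}{r}\partial_r$ term for free.
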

\begin{proof}
 We use the following inequality due to  Shirota and  Yanagisawa \cite{SY}. See also \cite{AHK}.
$$
\Big|\frac{v^r}{r}\Big|\lesssim \frac{1}{|.|^2}*\Big|\frac{\omega^\theta}{r}\Big|.
$$
Since $\frac{1}{|.|^2}\in L^{\frac32,\infty}$ then by the convolution law $L^{p,q}*L^{p',q'}\to L^\infty$ we get
\begin{align}\label{vr/r}
\Big\Vert \frac{v^r}{r}\Big\Vert_{L^\infty}&\lesssim  \Big\Vert\frac{\omega^\theta}{r}\Big\Vert_{L^{3,1}} .
\end{align}
On the other hand, we recall that the evolution of the quantity  $\Omega:=\omega^\theta/r$ is given by
\begin{equation*}
\partial_t  \Omega + v \cdot \nabla \Omega= -\partial_z\Gamma^2.
\end{equation*}
with $\Gamma:=b^\theta/r$.
By making an  $L^{p}$ estimates and interpolating we get 
\begin{align*}
\nonumber\Vert \Omega(t)\Vert_{L^{3,1}}&\leq \Vert \Omega_0\Vert_{L^{3,1}}+ \int_0^t\Vert \partial_z\Gamma^2(\tau)\Vert_{L^{3,1}}d\tau
\\
&\leq \Vert \Omega_0\Vert_{L^{3,1}}+ 2\int_0^t\Vert \Gamma(\tau)\Vert_{L^{\infty}}\Vert \partial_z\Gamma(\tau)\Vert_{L^{3,1}}d\tau,
\end{align*}
where we have used in  the last estimate the inequality \eqref{lor}.  
Then using the embedding  \eqref{emlb}  and Proposition \ref{prop:energy}-(ii)   we find
\begin{align}\label{omegl2lp}
\Vert \Omega(t)\Vert_{L^{3,1}}&\leq \Vert \Omega_0\Vert_{L^{3,1}}+ 2\Vert \Gamma_0\Vert_{ L^{\infty}}\Vert \Gamma\Vert_{L_t^1B_{2,1}^{\frac32}}.
\end{align}
Now, we shall estimate $\Vert \Gamma\Vert_{L^1_t B_{2,1}^{\frac32}}$. Let $q\in \mathbb{T}$ and set $\Gamma_q:=\Delta_q\Gamma$. Then, localizing in frequency the second  equation of \eqref{eq:mhd3} gives
\begin{equation}\label{eq:gammaq}
\partial_t  \Gamma_q + v \cdot \nabla \Gamma_q - \Big(\Delta +\frac{2}{r}\partial_r\Big)\Gamma_q=-[\Delta_q,v\cdot\nabla]\Gamma.
\end{equation}
Multiplying \eqref{eq:gammaq} by $|\Gamma_q|^{{p}-2}\Gamma_q$, $p\geq 2$, and using H\"older inequality, we get
$$
\frac{1}{{p}}\frac{d}{dt}\Vert\Gamma_q(t)\Vert_{L^{{p}}}^{{p}}-\int_{\mathbb{R}^3}(\Delta \Gamma_q)|\Gamma_q|^{{p}-2}\Gamma_qdx-\int_{\mathbb{R}^3}\frac{2}{r}\partial_r\Gamma_q|\Gamma_q|^{{p}-2}\Gamma_qdx\leq \Vert\Gamma_q\Vert_{L^{{p}}}^{{p}-1}\Vert[\Delta_q,v\cdot\nabla]\Gamma\Vert_{L^{{p}}}.
$$
Obviously,
$$
0 \leq-\int_{\mathbb{R}^3}\frac{2}{r}\partial_r\Gamma_q|\Gamma_q|^{{p}-2}\Gamma_qdx.
$$
Moreover, using the generalized Bernstein inequality, see \cite{L}, we find
$$
\frac{1}{{p}} 2^{2q}\Vert\Gamma_q(t)\Vert_{L^{{p}}}^{{p}}\leq-\int_{\mathbb{R}^3}(\Delta \Gamma_q)|\Gamma_q|^{{p}-2}\Gamma_qdx.
$$ 
Combining the three last estimates gives 
\begin{equation*}
\frac{d}{dt}\Vert\Gamma_q(t)\Vert_{L^{{p}}}+c_{{p}} 2^{2q}\Vert\Gamma_q\Vert_{L^{{p}}}\lesssim \Vert[\Delta_q,v\cdot\nabla]\Gamma\Vert_{L^{{p}}}.
\end{equation*}
This leads to
\begin{equation*}
\Vert\Gamma_q(t)\Vert_{L^{{p}}}\leq e^{-c_{{p}}t  2^{2q}}\Vert\Delta_q \Gamma_0\Vert_{L^{{p}}}+\int_0^t e^{-c_p 2^{2q}(t-\tau)}\Vert[\Delta_q,v\cdot\nabla]\Gamma\Vert_{L^{{p}}}d\tau.
\end{equation*}
Therefore,  by integration in time,  we obtain 
\begin{equation}\label{deltagama}
\Vert\Gamma_q\Vert_{L^\infty_t L^{{p}}}+2^{2q}\Vert\Gamma_q\Vert_{L^1_tL^{{p}}}\lesssim \Vert\Gamma_0\Vert_{L^{{p}}}+\Vert[\Delta_q,v\cdot\nabla]\Gamma\Vert_{L^1_tL^{{p}}}.
\end{equation}
In particular, for $p=2$, we have 
\begin{equation*}
\Vert\Gamma_q\Vert_{L^1_t L^{2}}\lesssim 2^{-2q}\Vert\Gamma_0\Vert_{L^2}+2^{-2q}\Vert[\Delta_q,v\cdot\nabla]\Gamma\Vert_{L^1_tL^2}.
\end{equation*}
Multiplying this last inequality by $2^{q\frac32}$, then  summing over $q$ we find 
\begin{align*}
\nonumber\Vert \Gamma\Vert_{L^1_t B_{2,1}^{\frac32}}
 &\lesssim \Vert \Delta_{-1}\Gamma\Vert_{L^1_t L^2}+\sum_{q\geq 0}2^{-q\frac12}\Vert \Gamma_0\Vert_{ L^2}+\sum_{q\geq 0}2^{-q\frac12}\Vert  [\Delta_q,v\cdot\nabla]\Gamma\Vert_{L^1_tL^2}.
\end{align*}
Then, we shall use   Proposition \ref{prop:commu2} and  Proposition \ref{prop:energy}-(ii)  to obtain 
\begin{align}\label{nablagamma2}
\nonumber\Vert \Gamma\Vert_{L^1_t B_{2,1}^{\frac32}}& \lesssim \Vert \Gamma_0\Vert_{ L^2}+\Vert \Gamma\Vert_{L^1_t L^2}+\int_0^t\Vert \Omega(\tau)\Vert_{L^{3,1}}\big(\Vert b^\theta(\tau)\Vert_{L^6}+\Vert \Gamma(\tau) \Vert_{L^2}\big)d\tau\\ & \lesssim \Vert \Gamma_0\Vert_{ L^2}(t+1)+\int_0^t\Vert \Omega(\tau)\Vert_{L^{3,1}}\big(\Vert b^\theta(\tau)\Vert_{L^6}+\Vert \Gamma_0 \Vert_{L^2}\big)d\tau.
\end{align}
Inserting the last estimate into \eqref{omegl2lp} gives
\begin{align*}
\Vert \Omega(t)\Vert_{L^{3,1}}&\leq C_0(t+1)+C_0\int_0^t\Vert \Omega(\tau)\Vert_{L^{3,1}}\big(\Vert b^\theta(\tau)\Vert_{L^6}+1\big)d\tau.
\end{align*}
The Gronwall lemma and Proposition \ref{prop:energy}-(iii) yield that
\begin{align}\label{om2p}
\Vert \Omega(t)\Vert_{L^{3,1}}&\leq C_0\big(t+1)e^{C_0\Vert b^\theta\Vert_{L_t^1L^6}}\leq C_0e^{C_0 t^\frac54 }.
\end{align}
This  gives in turn, by \eqref{nablagamma2} and Proposition \ref{prop:energy}-(iii),
\begin{align*}
\Vert \Gamma\Vert_{L^1_t B_{2,1}^{\frac32}} &\leq  C_0 (t+1)+C_0(1+t^\frac54) e^{C_0 t^\frac54 }\leq \Phi_1(t).
\end{align*}
Moreover, by   \eqref{vr/r}, we get
\begin{align*}
\Big\|\frac{v^r}{r}(t)\Big\|_{L^\infty}\lesssim \Vert \Omega(t)\Vert_{L^{3,1}}&\leq \Phi_1(t).
\end{align*}
Finally, using the fact that $\omega=\omega^\theta e_\theta$ and from \eqref{lor} we infer that
$$
\Big\Vert \frac{\omega}{r}(t)\Big\Vert_{L^{3,1}}\lesssim \Vert \Omega(t)\Vert_{L^{3,1}}\leq \Phi_1(t).
$$
This concludes the proof of the proposition.\qed
 \end{proof}

 \vspace{0,2cm}
 
 With the estimates established in Proposition \ref{prop:energy} and Proposition \ref{prop:gamma} in hand, we can bound $\Vert \omega\Vert_{L^\infty_tL^{\infty}}$ by using the smoothing effect on the magnetic field.
 
\begin{proposition}\label{propo:bound-vort}
Let $\sigma\in(3,\infty)$, $v_0\in L^2$ and $b_0\in L^2\cap B_{{\sigma},1}^{{3}/{\sigma}-1}$ be  two smooth axisymmetric vector fields with zero divergence   such that $\omega_0\in L^\infty$, ${\omega_0^\theta}/{r}\in L^{3,1}$ and    ${b_0^\theta}/{r}\in L^2\cap L^{\infty}$.  Then we have for every $t\in\mathbb{R}+$,
\begin{align*}
&
\Vert b\Vert_{{L}^\infty_t B_{{\sigma},1}^{{3}/{\sigma}-1}}+\Vert b\Vert_{{L}^1_t B_{{\sigma},1}^{{3}/{\sigma}+1}} +\Vert \omega\Vert_{L^\infty_tL^{\infty}}
+\Vert b\Vert_{L^2_t B_{\infty,1}^0} 
\leq \Phi_2(t).
\end{align*}

\end{proposition}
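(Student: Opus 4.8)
The plan is to run a bootstrap that couples the magnetic regularity and the vorticity bound, feeding each into the other and closing with Gronwall. First I would estimate $\Vert b\Vert_{\widetilde L^\infty_t B_{\sigma,1}^{3/\sigma-1}}+\Vert b\Vert_{\widetilde L^1_t B_{\sigma,1}^{3/\sigma+1}}$. Localizing \eqref{eq:onb} with $\Delta_q$, multiplying by $|\Delta_q b|^{\sigma-2}\Delta_q b$ and using the generalized Bernstein inequality (as in the proof of Proposition \ref{prop:gamma}) yields, after time integration,
$$
\Vert \Delta_q b\Vert_{L^\infty_t L^\sigma}+2^{2q}\Vert \Delta_q b\Vert_{L^1_t L^\sigma}\lesssim \Vert \Delta_q b_0\Vert_{L^\sigma}+\Big\Vert\Big[\Delta_q,v\cdot\nabla\Big]b\Big\Vert_{L^1_t L^\sigma}+\Big\Vert \Delta_q\Big(\tfrac{v^r}{r}\,b\Big)\Big\Vert_{L^1_t L^\sigma}.
$$
Multiplying by $2^{q(3/\sigma-1)}$ and summing, the commutator term is handled by Proposition \ref{prop:commu} (whose right side needs $\Vert\nabla\Delta_{-1}v\Vert_{L^\infty}+(q+2)\Vert\omega\Vert_{L^\infty}$, so it couples to $\Vert\omega\Vert_{L^\infty}$, producing a factor $(1+\Vert\omega\Vert_{L^\infty_t L^\infty})$ against $\Vert b\Vert_{L^1_t B_{\sigma,1}^{3/\sigma-1}}$, possibly with a logarithmic loss absorbed by the $2^{2q}$ gain), while the forcing term is bounded by $\Vert v^r/r\Vert_{L^\infty_t L^\infty}\Vert b\Vert_{L^1_t B_{\sigma,1}^{3/\sigma-1}}$, and $\Vert v^r/r\Vert_{L^\infty_t L^\infty}\le\Phi_1(t)$ by Proposition \ref{prop:gamma}. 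This gives a Gronwall inequality for $\Vert b\Vert_{\widetilde L^\infty_t B_{\sigma,1}^{3/\sigma-1}}$ in terms of $\Vert\omega\Vert_{L^\infty_t L^\infty}$ and $\Phi_1$.

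Next I would bound $\Vert\omega\Vert_{L^\infty_t L^\infty}$. From \eqref{eq:vorticity}, $\Omega=\omega^\theta/r$ and the relation $\omega=\omega^\theta e_\theta=r\,\Omega\,e_\theta$, one gets the transport equation for $\omega$ with forcing $\frac{v^r}{r}\omega-\partial_z(\Gamma b)$; applying the maximum principle to this transport-type equation (the stretching term $\frac{v^r}{r}\omega$ contributes $\exp\int_0^t\Vert v^r/r\Vert_{L^\infty}\le\Phi_1$ via Gronwall) reduces matters to controlling $\Vert\partial_z(\Gamma b)\Vert_{L^1_t L^\infty}$. Here I would use $\Vert\partial_z(\Gamma b)\Vert_{L^\infty}\lesssim \Vert b\Vert_{B_{\infty,1}^1}\Vert\Gamma\Vert_{L^\infty}+\cdots$ — but the cleaner route, and the one the statement points to, is to directly estimate $\Vert b\Vert_{L^2_t B_{\infty,1}^0}$ (hence $\Vert b\Vert_{L^1_t B_{\infty,1}^0}$ by Cauchy–Schwarz in time on a finite interval) using the embedding $B_{\sigma,1}^{3/\sigma+1}\hookrightarrow B_{\infty,1}^1\hookrightarrow B_{\infty,1}^0$ together with the smoothing estimate $\Vert b\Vert_{L^1_t B_{\sigma,1}^{3/\sigma+1}}$ from the first step, interpolated against the energy bound $\Vert b\Vert_{L^\infty_t L^2}\le C_0$; since $\sigma>3$ we have $3/\sigma+1<2$ and $3/\sigma-1<1$, so the heat gain of two derivatives genuinely lands us in the right space. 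Then $\Vert\partial_z(\Gamma b)\Vert_{L^1_t L^\infty}\lesssim \Vert\Gamma\Vert_{L^\infty_t L^\infty}\Vert\nabla b\Vert_{L^1_t L^\infty}\lesssim \Vert\Gamma_0\Vert_{L^\infty}(\Vert b\Vert_{L^1_t B_{\sigma,1}^{3/\sigma+1}}+\cdots)$, so $\Vert\omega\Vert_{L^\infty_t L^\infty}$ is bounded by $\Phi_1$ times these magnetic norms.

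Finally I would close the loop: the two inequalities — $\Vert b\Vert_{\widetilde L^\infty_t B_{\sigma,1}^{3/\sigma-1}}\lesssim \Phi_1(t)\exp\big(C\Vert\omega\Vert_{L^\infty_t L^\infty}+C\Vert b\Vert_{L^1_t B_{\sigma,1}^{3/\sigma-1}}\big)$ and $\Vert\omega\Vert_{L^\infty_t L^\infty}\lesssim \Phi_1(t)\big(1+\Vert b\Vert_{L^1_t B_{\sigma,1}^{3/\sigma+1}}\big)$, with $\Vert b\Vert_{L^1_t B_{\sigma,1}^{3/\sigma+1}}$ itself controlled by the first display — form a closed system; substituting and applying Gronwall (repeatedly, which is exactly what the tower notation $\Phi_l$ is designed to absorb) yields a bound of the form $\Phi_2(t)$ for all four quantities simultaneously. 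The switch from $\widetilde L$ to $L$ norms is harmless by Lemma \ref{le1}. The main obstacle I anticipate is the commutator $[\Delta_q,v\cdot\nabla]b$ in $L^\sigma$: Proposition \ref{prop:commu} delivers a bound linear in $\Vert\omega\Vert_{L^\infty}$ but with a $(q+2)$ factor, so one must check that this logarithmic loss is genuinely absorbed by the parabolic smoothing $2^{2q}$ when summing the Besov series, and that the resulting Gronwall argument does not blow up the tower height beyond $\Phi_2$; keeping careful track of which estimates contribute an exponential (the two transport/Gronwall steps) versus merely a polynomial-in-$t$ factor is what pins the final answer at exactly two exponentials.
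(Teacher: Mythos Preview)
Your overall architecture --- parabolic smoothing for $b$ via localization of \eqref{eq:onb}, maximum principle for $\omega$ via \eqref{eq:vorticity}, then couple and close --- matches the paper. But you misread what the commutator and forcing terms actually produce, and this matters for the closing.

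Proposition~\ref{prop:commu} gives
\[
\Vert[\Delta_q,v\cdot\nabla]b\Vert_{L^\sigma}\lesssim \Vert b\Vert_{L^\sigma}\big(\Vert\nabla\Delta_{-1}v\Vert_{L^\infty}+(q+2)\Vert\omega\Vert_{L^\infty}\big),
\]
with the \emph{Lebesgue} norm $\Vert b\Vert_{L^\sigma}$ on the right, not a Besov norm. Likewise the forcing satisfies $\Vert\Delta_q(\tfrac{v^r}{r}b)\Vert_{L^\sigma}\lesssim \Vert v^r/r\Vert_{L^\infty}\Vert b\Vert_{L^\sigma}$. When you multiply by $2^{q(3/\sigma-1)}$ and sum, the crucial point is that $3/\sigma-1<0$ (this is where $\sigma>3$ is used), so $\sum_{q\ge0}2^{q(3/\sigma-1)}(q+2)<\infty$ and the $(q+2)$ loss is absorbed \emph{without} touching the Besov norm of $b$. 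What remains is
\[
\Vert b\Vert_{\widetilde L^\infty_t B_{\sigma,1}^{3/\sigma-1}}+\Vert b\Vert_{\widetilde L^1_t B_{\sigma,1}^{3/\sigma+1}}
\lesssim \Vert b_0\Vert_{B_{\sigma,1}^{3/\sigma-1}}+\Vert b\Vert_{L^1_tL^\sigma}\big(1+\Vert v\Vert_{L^\infty_tL^2}+\Vert v^r/r\Vert_{L^\infty_tL^\infty}\big)+\int_0^t\Vert b(\tau)\Vert_{L^\sigma}\Vert\omega(\tau)\Vert_{L^\infty}d\tau,
\]
and $\Vert b\Vert_{L^1_tL^\sigma}$ is \emph{already} controlled by \eqref{l1tb} in Proposition~\ref{prop:energy}. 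So there is no Gronwall loop for $b$ at all: the magnetic smoothing estimate is directly linear in $\int_0^t\Vert\omega\Vert_{L^\infty}$. Inserting this into the vorticity inequality \eqref{estomlinf} yields a single linear Gronwall for $\Vert\omega(t)\Vert_{L^\infty}$ with coefficient $\Vert b\Vert_{L^\sigma}+\Vert v^r/r\Vert_{L^\infty}$, whose time integral is bounded by $\Phi_1(t)$; this is exactly what pins the tower at $\Phi_2$.

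Your proposed closing --- with $\Vert b\Vert_{\widetilde L^\infty_t B_{\sigma,1}^{3/\sigma-1}}\lesssim \Phi_1\exp(C\Vert\omega\Vert_{L^\infty_tL^\infty}+\cdots)$ and then substitution into the $\omega$ bound --- sets up a transcendental inequality of the form $W\lesssim \Phi_1(1+\Phi_1 e^{CW})$, which does not close by Gronwall and would, even if forced through by a continuity argument, overshoot $\Phi_2$. The fix is simply to notice that you never needed that first exponential.
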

\begin{proof}
Applying the maximum principle to \eqref{eq:vorticity}  gives
\begin{align}\label{estomlinf}
\Vert \omega(t)\Vert_{L^{\infty}}&\lesssim \Vert \omega_0\Vert_{L^{\infty}}+\int_0^t\Vert \partial_z b(\tau)\Vert_{L^\infty}\Vert b^\theta(\tau)/r\Vert_{L^{\infty}}d\tau+\int_0^t\Vert v^r(\tau)/r\Vert_{L^\infty}\Vert \omega(\tau)\Vert_{L^{\infty}}d\tau\nonumber \\&\lesssim \Vert \omega_0\Vert_{L^{\infty}}+\Vert b_0^\theta/r\Vert_{L^{\infty}}\Vert b\Vert_{L_t^1B_{{\sigma},1}^{\frac{3}{\sigma}+1}}+\int_0^t\Vert v^r(\tau)/r\Vert_{L^\infty}\Vert \omega(\tau)\Vert_{L^{\infty}}d\tau
\end{align}
where we have used in the last estimate   the embedding $B_{{\sigma},1}^{\frac{3}{\sigma}+1}\hookrightarrow\textnormal{Lip}(\R^3) $ and  Proposition \ref{prop:energy}-(ii). Now we shall estimate $\Vert b\Vert_{L_t^1B_{{\sigma},1}^{\frac{3}{\sigma}+1}}$.
Localizing in frequency the equation \eqref{eq:onb} gives
$$
\partial_t b_q+v\cdot\nabla b_q-\Delta b_q=F_q-[\Delta_q,v \cdot \nabla]b
$$
for all  $q\in\mathbb{N}\cup\{-1\}$ where $b_q:=\Delta_q b$ and  $F_q :=\Delta_q \big(b\cdot\nabla v\big)=\Delta_q \big(\frac{v^r}{r}b\big)$. Then taking the standard $L^p$-estimate yields, for all $p\geq 2$,
$$
\frac{1}{p}\frac{d}{dt}\Vert b_q(t)\Vert_{L^{p}}^{p}-\int_{\mathbb{R}^3}(\Delta  b_q)|  b_q |^{p-2} b_q dx\leq  \Vert b_q(t)\Vert_{L^{p}}^{p-1}\Big(\Vert F_q(t)\Vert_{L^{p}}+\Vert [\Delta_q, v \cdot \nabla] b(\tau) \Vert_{L^p}\Big).
$$
Using the generalized Bernestein inequality, see \cite{L},
$$
\frac{1}{{p}} 2^{2q}\Vert b_q(t)\Vert_{L^{{p}}}^{{p}}\leq-\int_{\mathbb{R}^3}(\Delta b_q)|b_q|^{{p}-2}b_qdx,
$$
we obtain
$$
\frac{d}{dt}\Vert b_q(t)\Vert_{L^{p}}+c_p 2^{2q}\Vert b_q(t)\Vert_{L^{p}}\lesssim  \Vert F_q(t)\Vert_{L^{p}}+\Vert [\Delta_q, v \cdot \nabla] b(t) \Vert_{L^p}.
$$
This leads to
\begin{equation*}
\Vert b_q(t)\Vert_{L^{p}}\lesssim e^{-c_pt2^{2q}}\Vert b_q(0)\Vert_{L^{p}}+\int_0^te^{-c(t-\tau)2^{2q}}\Big(\Vert F_q(\tau)\Vert_{L^{p}}+\Vert [\Delta_q, v \cdot \nabla] b(\tau) \Vert_{L^p}\Big)d\tau.
\end{equation*}
By Young's convolution inequality we get,  for all $a\in[1,\infty)$,
\begin{equation}\label{decoup-b0}
\Vert b_q\Vert_{L^\infty_t L^p}+2^{\frac{2q}{a}}\Vert b_q\Vert_{L^a_t L^p}\leq c_a \Vert b_q(0)\Vert_{L^p}+c_a\Vert[\Delta_q,v\cdot\nabla]b\Vert_{L^1_t L^p}+\Vert F_q\Vert_{L^1_t L^{p}}.
\end{equation}
To estimate the commutator in the right hand side, we can use  Proposition \ref{prop:commu},
\begin{align*}
\Vert[\Delta_q,v\cdot\nabla]b\Vert_{L_t^1L^p}&\lesssim \int_0^t\Vert b(\tau)\Vert_{L^p}\big((q+2)\| \omega(\tau)\|_{ L^\infty}+\| v(\tau)\|_{ L^2}\big)d\tau.
\end{align*}
As for the second term in \eqref{decoup-b0}, we us the continuity of the operator $\Delta_q$ in the $L^p$ space  to write
$$
\Vert F_q\Vert_{L^1_t L^{p}}=\Big\Vert \Delta_q\Big(\frac{v^r}{r}b\Big)\Big\Vert_{L^1_t L^{p}}\lesssim \int_0^t\Big\Vert \frac{v^r}{r}(\tau)\Big\Vert_{L^{\infty}}\| b(\tau)\|_{  L^p}d\tau .
$$
Inserting the two last estimates into \eqref{decoup-b0} we find
\begin{align}\label{estim-smoothing-b}
\nonumber&\Vert b_q\Vert_{L^\infty_t L^p}+2^{\frac{2q}{a}}\Vert b_q\Vert_{L^a_t L^p}\lesssim\\ &\qquad\qquad \Vert b_q(0)\Vert_{L^p}+\Vert b\Vert_{L^1_tL^{\sigma}}\big(\| v\|_{ L^\infty_t L^2}+\Vert {v^r}/{r}\Vert_{L^\infty_t L^\infty} \big)+(q+2)\int_0^t\Vert b(\tau)\Vert_{L^p}\| \omega(\tau)\|_{ L^\infty}d\tau .
\end{align}
Using the last estimate,  with $a=1$ and $p=\sigma>3$, we obtain 
 \begin{align}\label{claim}
\nonumber\Vert b\Vert_{{L}^\infty_t B_{{\sigma},1}^{\frac{3}{\sigma}-1}}+\Vert b\Vert_{{L}^1_t B_{{\sigma},1}^{\frac{3}{\sigma}+1}} 
&\nonumber \lesssim  \|\Delta_{-1} b\|_{L^1_t L^{\sigma}}+\Vert b_0\Vert_{B_{{\sigma},1}^{\frac{3}{\sigma}-1}}+\Vert b\Vert_{L^1_tL^{\sigma}}\big(\| v\|_{ L^\infty_t L^2}+\Vert {v^r}/{r}\Vert_{L^\infty_t L^\infty} \big)\Big(\sum_{q\geq 0}2^{q(\frac{3}{\sigma}-1)}\Big)
\\ &\nonumber \quad +\int_0^t\Vert b(\tau)\Vert_{L^{\sigma}}\| \omega(\tau)\|_{ L^\infty}d\tau\Big(\sum_{q\geq 0}2^{q(\frac{3}{\sigma}-1)}(q+2)\Big)
\\ & \nonumber  \lesssim \Vert b_0\Vert_{B_{{\sigma},1}^{\frac{3}{\sigma}-1}}+\Vert b\Vert_{L^1_tL^{\sigma}}\big(1+\| v\|_{ L^\infty_t L^2}+\Vert {v^r}/{r}\Vert_{L^\infty_t L^\infty} \big)\\ &\quad +\int_0^t\Vert b(\tau)\Vert_{L^{\sigma}}\| \omega(\tau)\|_{ L^\infty}d\tau.
\end{align}
Then combining \eqref{estomlinf} and  \eqref{claim} gives
\begin{align*}
\Vert \omega(t)\Vert_{L^{\infty}}&\leq C_0+C_0 \Vert b\Vert_{L^1_tL^{\sigma}}\big(1+\| v\|_{ L^\infty_t L^2}+\Vert {v^r}/{r}\Vert_{L^\infty_t L^\infty} \big)\\ &+C_0\int_0^t \big( \Vert b(\tau)\Vert_{L^{\sigma}}+\Vert {v^r(\tau)}/{r}\Vert_{ L^\infty} \big)\Vert \omega(\tau)\Vert_{L^{\infty}}d\tau.
\end{align*}
Thanks to  Gronwall’s inequality we infer  that
\begin{align*}
\Vert \omega(t)\Vert_{L^{\infty}}&\leq C_0\Big(1+ \Vert b\Vert_{L^1_tL^{\sigma}}\big(1+\| v\|_{ L^\infty_t L^2}+\Vert {v^r}/{r}\Vert_{L^\infty_t L^\infty} \big)\Big)e^{C_0  \Vert b\Vert_{L^1_tL^{\sigma}}+C_0\Vert {v^r}/{r}\Vert_{L^\infty_t L^\infty} }.
\end{align*}
From Proposition \ref{prop:energy} and Proposition \ref{prop:gamma} we conclude that
\begin{align}\label{omeg-inf}
\Vert \omega(t)\Vert_{L^{\infty}}&\leq C_0\big(1+( t^\frac54+t^\frac14)\Phi_1(t) \big)e^{C_0  ( t^\frac54+t^\frac14)+\Phi_1(t) }\leq \Phi_2(t) .
\end{align}
This gives in turn, by \eqref{claim},
\begin{equation}\label{claim2}
\Vert b\Vert_{{L}^\infty_t B_{{\sigma},1}^{\frac{3}{\sigma}-1}}+\Vert b\Vert_{{L}^1_t B_{{\sigma},1}^{\frac{3}{\sigma}+1}}  \leq   \Phi_2(t). 
\end{equation}
Next we shall prove the   estimate on $\Vert b\Vert_{L^2_t L^{\infty}}$. By Bernstein inequality, for $\sigma>3$, combined  the estimate \eqref{estim-smoothing-b},  at $a=2$, we get
\begin{align*}
\Vert b\Vert_{L^2_t B^0_{\infty,1}} &\le\|\Delta_{-1} b\|_{L^2_t L^{\infty}}+\sum_{q\geq 0}\| \Delta_{q} b\|_{ L^2_t L^{\infty}}
\\ \nonumber&\lesssim \| b\|_{L^2_t L^{2}}+\sum_{q\geq 0}2^{q\frac{3}{\sigma}}\| b_q\|_{ L^2_t L^{\sigma}} 
\\ &\nonumber \lesssim \| b\|_{L^2_t L^{2}}+\Vert b_0\Vert_{B_{{\sigma},1}^{\frac{3}{\sigma}-1}}
 +\Vert b\Vert_{L^1_tL^{\sigma}}\big(\| v\|_{ L^\infty_t L^2}+\Vert {v^r}/{r}\Vert_{L^\infty_t L^\infty} +\| \omega\|_{L_t^\infty L^\infty}\big)
  \sum_{q\geq 0}2^{q(\frac{3}{\sigma}-1)}(q+2)
\\ & \lesssim \| b\|_{L^2_t L^{2}}+\Vert b_0\Vert_{B_{{\sigma},1}^{\frac{3}{\sigma}-1}}+\Vert b\Vert_{L^1_tL^{\sigma}}\big(\| v\|_{ L^\infty_t L^2}+\Vert {v^r}/{r}\Vert_{L^\infty_t L^\infty} +\| \omega\|_{L_t^\infty L^\infty}\big).
\end{align*}
From Proposition \ref{prop:energy}, Proposition \ref{prop:gamma} and the estimate \eqref{omeg-inf} we conclude that
 \begin{align*}
\Vert b\Vert_{L^2_t B^0_{\infty,1}} &\leq C_0(t^\frac12+1)+C_0(t^{\frac54}+t^\frac14)\big(1+\Phi_1(t)+\Phi_2(t)\big)\le  \Phi_2(t).
\end{align*}
This completes the proof of the proposition. \qed

\end{proof}

\vspace{0.2cm}

The following  global a priori estimate will be needed for the propagation of  the critical Besov  regularities.

\begin{proposition}\label{propo:bound-vel}
Let $v_0\in L^2\cap L^\infty$, $b_0\in L^2\cap B_{{\sigma},1}^{\frac{3}{\sigma}-1}$,  $3<\sigma<\infty$ be  two smooth axisymmetric vector fields with zero divergence such that $\omega_0\in L^{\infty}$,  ${\omega_0 }/{r}\in L^{3,1}$ and    ${b_0^\theta}/{r}\in L^2\cap L^{\infty}$.
Then,  for every $t\in\mathbb{R}+$,
\begin{align*}
&\Vert v\Vert_{L_t^\infty L^{\infty}}\leq \Phi_3(t).
\end{align*}
\end{proposition}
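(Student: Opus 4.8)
The plan is to deduce the $L^\infty$ bound on the velocity from the estimates already in hand on $\|v\|_{L^2}$ and $\|\omega\|_{L^\infty}$, by means of the Biot--Savart law, exploiting the fact that reconstructing $v$ from $\omega$ gains one derivative. The key (elementary, axisymmetry-free) inequality I would prove is
$$\|v\|_{L^\infty}\ \lesssim\ \|v\|_{L^2}+\|\omega\|_{L^\infty},$$
valid for every divergence-free $v\in L^2(\R^3)$ whose vorticity $\omega=\textnormal{curl}\,v$ lies in $L^\infty$; the argument in fact yields the slightly stronger $\|v\|_{B^0_{\infty,1}}\lesssim\|v\|_{L^2}+\|\omega\|_{L^\infty}$.

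To prove it I would split $v=\Delta_{-1}v+\sum_{q\ge0}\Delta_qv$ and bound $\|v\|_{L^\infty}\le\|\Delta_{-1}v\|_{L^\infty}+\sum_{q\ge0}\|\Delta_qv\|_{L^\infty}$. For the low-frequency block, Bernstein's inequality (Lemma \ref{br}) gives $\|\Delta_{-1}v\|_{L^\infty}\lesssim\|\Delta_{-1}v\|_{L^2}\lesssim\|v\|_{L^2}$. For $q\ge0$, since $v$ is divergence-free one has the Fourier identity $\widehat v(\xi)=i|\xi|^{-2}\,\xi\times\widehat\omega(\xi)$, so that $\Delta_qv$ is obtained from $\Delta_q\omega$ by convolution with a kernel whose $L^1$ norm is $O(2^{-q})$; hence $\|\Delta_qv\|_{L^\infty}\lesssim2^{-q}\|\Delta_q\omega\|_{L^\infty}\lesssim2^{-q}\|\omega\|_{L^\infty}$, and summing the geometric series over $q\ge0$ yields $\sum_{q\ge0}\|\Delta_qv\|_{L^\infty}\lesssim\|\omega\|_{L^\infty}$. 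This gives the displayed inequality.

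It then remains to insert the a priori bounds. By the energy law \eqref{energ-est} one has $\|v(t)\|_{L^2}^2\le\|v_0\|_{L^2}^2+\|b_0\|_{L^2}^2=C_0^2$ for all $t$, while Proposition \ref{propo:bound-vort} (whose hypotheses are contained in those of the present statement) gives $\|\omega\|_{L^\infty_tL^\infty}\le\Phi_2(t)$. Combining these with the inequality above, $\|v\|_{L^\infty_tL^\infty}\le C_0+C\,\Phi_2(t)\le\Phi_3(t)$, which is the assertion.

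I do not expect a genuine obstacle here: the statement is a soft corollary of Propositions \ref{prop:energy}, \ref{prop:gamma} and \ref{propo:bound-vort}. The only point worth isolating is the classical remark that the high-frequency Littlewood--Paley blocks of $v$ decay like $2^{-q}\|\omega\|_{L^\infty}$ — the derivative gained by the Biot--Savart kernel — so that, unlike $\|\nabla v\|_{L^\infty}$, the norm $\|v\|_{L^\infty}$ is controlled by $\|\omega\|_{L^\infty}$ with \emph{no} logarithmic correction.
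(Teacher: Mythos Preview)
Your proposal is correct and takes a more elementary route than the paper. The paper follows Serfati's argument: it writes $v=\dot S_{-N}v+\sum_{q\ge-N}\dot\Delta_qv$ with a free parameter $N$, integrates the velocity equation $\partial_tv=\mathbb P(b\cdot\nabla b-v\cdot\nabla v)$ in time to control $\|\dot S_{-N}v\|_{L^\infty}$ (picking up a term $2^{-N}\int_0^t\|v\|_{L^\infty}^2$), bounds the remaining sum by $2^N\|\omega\|_{L^\infty}$, optimizes over $N$, and closes via Gronwall; this costs the hypothesis $v_0\in L^\infty$ and the bound $\|b\|_{L^2_tB^0_{\infty,1}}\le\Phi_2(t)$ from Proposition~\ref{propo:bound-vort}. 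The Serfati machinery is what one needs when $v$ carries no global Lebesgue integrability, as in the pure $B^1_{\infty,1}$ framework of \cite{AHK}. Here, however, the energy law already gives $\|v\|_{L^\infty_tL^2}\le C_0$, so your inhomogeneous splitting with $\|\Delta_{-1}v\|_{L^\infty}\lesssim\|v\|_{L^2}$ bypasses the whole loop. Your argument never uses the assumption $v_0\in L^\infty$ and in fact delivers the bound at level $\Phi_2$ rather than $\Phi_3$.
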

\begin{proof}
 We will use an argument due to Serfati \cite{Serfati} and applied for the
Euler case,
\begin{align}\label{vinf}
&\Vert v(t)\Vert_{L^{\infty}}\leq \Vert \dot{S}_{-N} v(t)\Vert_{L^{\infty}}+\sum_{q\geq -N}\Vert \Delta_q v(t)\Vert_{L^{\infty}}
\end{align}
where $N$ is an arbitrary positive integer that will be fixed later. By Bernstein inequality we get
\begin{align}\label{vinf1}
\sum_{j\geq -N}\Vert \Delta_q v(t)\Vert_{L^{\infty}}\lesssim \sum_{q\geq -N}2^{-q}\Vert \Delta_q \omega(t)\Vert_{L^{\infty}}\lesssim 2^{N}\Vert  \omega(t)\Vert_{L^{\infty}}.
\end{align}
Using  the matrix Leray operator $\mathbb{P}$, the first equation of \eqref{MHD} can be reformulated  as follows
\begin{equation}\label{veleq}
\partial_{t}v=\mathbb{P}\big((b\cdot\nabla) b-(v\cdot\nabla) v\big).
\end{equation}
Therefore, one has the estimate
\begin{align*}
\Vert \dot{S}_{-N}v\Vert_{L^{\infty}}
&\le \Vert \dot{S}_{-N}v_{0}\Vert_{L^{\infty}}+\int_{0}^{t}\Big(\Vert\dot{S}_{-N}\mathbb{P}(b\cdot\nabla) b(\tau)\Vert_{L^{\infty}}+\Vert\dot{S}_{-N}\mathbb{P}(v\cdot\nabla) v(\tau)\Vert_{L^{\infty}}\Big)d\tau
\\ &\le \Vert v_{0}\Vert_{L^{\infty}}+\sum_{q\leq -N}\int_{0}^{t}\Big(\Vert\dot{\Delta}_{q}\textnormal{div}\,\mathbb{P}(b\otimes b)(\tau)\Vert_{L^{\infty}}+\Vert\dot{\Delta}_{q}\textnormal{div}\,\mathbb{P}(v\otimes v)(\tau)\Vert_{L^{\infty}}\Big)d\tau
\\ &\lesssim \Vert v_{0}\Vert_{L^{\infty}}+\sum_{q\leq -N}2^q\int_{0}^{t}\Big(\Vert\dot{\Delta}_{q}\mathbb{P}(b\otimes b)(\tau)\Vert_{L^{\infty}}+\Vert\dot{\Delta}_{q}\mathbb{P}(v\otimes v)(\tau)\Vert_{L^{\infty}}\Big)d\tau.
\end{align*}
 Since $\dot\Delta_{q}\mathbb{P}$ is continuous from $L^\infty$  to $L^\infty$ we deduce that
\begin{equation}\label{m11}
\Vert \dot{S}_{-N}v\Vert_{L^{\infty}}\lesssim \Vert v_{0}\Vert_{L^{\infty}}+\int_{0}^{t}\Vert b(\tau)\Vert_{L^{\infty}}^2 d\tau+2^{-N}\int_{0}^{t}\Vert v(\tau)\Vert_{L^{\infty}}^2d\tau.
\end{equation}
Inserting \eqref{vinf1} and \eqref{m11} into \eqref{vinf} we obtain
\begin{align*}
&\Vert v(t)\Vert_{L^{\infty}}\lesssim \Vert v_{0}\Vert_{L^{\infty}}+\int_{0}^{t}\Vert b(\tau)\Vert_{L^{\infty}}^2 d\tau+2^{N}\Vert  \omega(t)\Vert_{L^{\infty}}+2^{-N}\int_{0}^{t}\Vert v(\tau)\Vert_{L^{\infty}}^2d\tau.
\end{align*}
We choose $N$ such that
$$
2^{2N}\approx 1+\Vert  \omega(t)\Vert_{L^{\infty}}^{-1}\int_{0}^{t}\Vert v(\tau)\Vert_{L^{\infty}}^2d\tau,
$$
we find
\begin{align*}
&\Vert v(t)\Vert_{L^{\infty}}^2\lesssim \Vert v_{0}\Vert_{L^{\infty}}^2+\Big(\int_{0}^{t}\Vert b(\tau)\Vert_{L^{\infty}}^2 d\tau\Big)^2+\Vert  \omega(t)\Vert_{L^{\infty}}^2+\Vert  \omega(t)\Vert_{L^{\infty}}\Big(\int_{0}^{t}\Vert v(\tau)\Vert_{L^{\infty}}^2d\tau\Big).
\end{align*}
According to Gronwall’s inequality we get
\begin{align*}
\Vert v(t)\Vert_{L^{\infty}}^2&\lesssim \Big(\Vert v_{0}\Vert_{L^{\infty}}^2+\Big(\int_{0}^{t}\Vert b(\tau)\Vert_{L^{\infty}}^2d\tau\Big)^2+\Vert  \omega(t)\Vert_{L^{\infty}}^2 \Big)e^{Ct\Vert  \omega\Vert_{L_t^\infty L^{\infty}}}\\ &\lesssim \Big(\Vert v_{0}\Vert_{L^{\infty}}^2+\Vert b\Vert_{L_t^2 B_{\infty,1}^0}^4+\Vert  \omega(t)\Vert_{L^{\infty}}^2 \Big)e^{Ct\Vert  \omega\Vert_{L_t^\infty L^{\infty}}}.
\end{align*}
where we have used in the last estimate the embedding $B_{\infty,1}^0\hookrightarrow L^\infty$.
Then, by Proposition \ref{propo:bound-vort} we conclude that
\begin{align*}
&\Vert v(t)\Vert_{L^{\infty}}^2\leq \Phi_3(t)
\end{align*}
which is the desired estimate.\qed
\end{proof}

\section{Sub-critical regularities}
In this section we intend to prove Theorem \ref{thm:sub-critical}. We first deal with
some a priori estimates. In a second step we we highlight the proof of   the existence part and finally  we give the uniqueness result.
\subsection{Lipschitz bound of the velocity}
We shall prove the  global propagation of the sub-critical Sobolev regularities, which is  heavily related to the control of the Lipschitz norm of the velocity. The main result of this section is the following:

\begin{proposition}\label{prop:sub-c}
 Let $v_0\in H^s$ and  $b_0\in H^{s-2}$, $s>5/2$,  be two divergence-free axisymmetric vector field as in \eqref{axiv}. Assume in addition that ${b_0^\theta}/{r}\in L^2\cap L^\infty$ Then any smooth
solution  $(u,v)$    of the MHD
system \eqref{MHD} satisfies
\begin{align*}
&\Vert v\Vert_{\widetilde{L}^\infty_t H^{s}}+\Vert b\Vert_{\widetilde{L}^\infty_t H^{s-2}}+\Vert b\Vert_{\widetilde{L}^1_t H^{s }}\leq \Phi_5(t) .
\end{align*}
\end{proposition}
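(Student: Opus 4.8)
The plan is to run a standard energy/commutator argument in the Chemin--Lerner spaces, using the a priori bounds from Propositions \ref{prop:energy}--\ref{propo:bound-vort} as black boxes. Localize both equations of \eqref{MHD} in frequency: set $v_q:=\Delta_q v$, $b_q:=\Delta_q b$, so that
\begin{equation*}
\partial_t v_q+v\cdot\nabla v_q+\nabla\Delta_q p=\Delta_q(b\cdot\nabla b)-[\Delta_q,v\cdot\nabla]v,\qquad
\partial_t b_q+v\cdot\nabla b_q-\Delta b_q=\Delta_q(b\cdot\nabla v)-[\Delta_q,v\cdot\nabla]b.
\end{equation*}
Taking the $L^2$ inner product of the first with $v_q$ (the pressure term drops since $\operatorname{div}v=0$), and of the second with $b_q$ while exploiting the dissipation $-\int(\Delta b_q)\cdot b_q\geq c2^{2q}\|b_q\|_{L^2}^2$, multiplying by $2^{2qs}$ and $2^{2q(s-2)}$ respectively, summing in $q$ after an $\ell^1$-in-$q$ rearrangement (this is exactly what produces the Chemin--Lerner norms $\widetilde L^\infty_tH^s$, $\widetilde L^\infty_tH^{s-2}$, $\widetilde L^1_tH^s$), one is left to estimate four families of terms: the two commutators and the two nonlinear coupling terms $\Delta_q(b\cdot\nabla b)$ and $\Delta_q(b\cdot\nabla v)$.

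The commutator $[\Delta_q,v\cdot\nabla]v$ is handled by Proposition \ref{prop:commu}: it gives a bound by $\|v\|_{H^s}$ times $(\|\nabla\Delta_{-1}v\|_{L^\infty}+(q+2)\|\omega\|_{L^\infty})$, and after the $2^{qs}\ell^2$ summation the factor $(q+2)$ is absorbed by a standard logarithmic trick or, more cleanly, one uses the classical commutator estimate in Besov spaces with the $\log$ loss controlled by the already-known bound $\|\omega\|_{L^\infty_tL^\infty}\leq\Phi_2(t)$; since $s>5/2$ one in fact has $H^s\hookrightarrow \text{Lip}$ a priori from a continuity argument, so the cleanest route is to first close a bound on $\|\nabla v\|_{L^1_tL^\infty}$ using Proposition \ref{prop:trans-besov}-type transport estimates on $\omega$ together with $\|\omega\|_{L^\infty_tL^\infty}\leq\Phi_2(t)$ and the Biot--Savart law, and then feed $U(t):=\|\nabla v\|_{L^1_tL^\infty}\leq\Phi_2(t)$ into everything. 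For the magnetic commutator $[\Delta_q,v\cdot\nabla]b$ in $H^{s-2}$ (and in $H^s$ for the $\widetilde L^1_t$ part), use again Proposition \ref{prop:commu}, getting $\|b\|_{H^{s-2}}$ (resp. $\|b\|_{H^s}$) times the same velocity factor. The coupling term $b\cdot\nabla v$: by Bony decomposition and product laws, $\|b\cdot\nabla v\|_{H^{s-2}}\lesssim \|b\|_{H^{s-2}}\|\nabla v\|_{L^\infty}+\|\nabla v\|_{H^{s-2}}\|b\|_{L^\infty}$ — but $\|\nabla v\|_{H^{s-2}}=\|\nabla v\|_{H^{s-2}}$ costs $\|v\|_{H^{s-1}}\leq\|v\|_{H^s}$, and $\|b\|_{L^\infty}\lesssim\|b\|_{B^0_{\infty,1}}$ is integrable in time in $L^2_t$ by Proposition \ref{propo:bound-vort}; here one should be slightly careful and use the axisymmetric structure $b\cdot\nabla v=\frac{v^r}{r}b^\theta e_\theta$ from \eqref{eq:mhd1}, so that the term is really $\|\frac{v^r}{r}b\|$, and $\|v^r/r\|_{L^\infty_tL^\infty}\leq\Phi_1(t)$ by Proposition \ref{prop:gamma}, which makes this term harmless. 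The genuinely dangerous term is $b\cdot\nabla b=-\frac{(b^\theta)^2}{r}e_r$ appearing in the velocity equation at level $H^s$: estimating $\|\operatorname{curl}(b\cdot\nabla b)\|_{H^{s-1}}$ or $\|b\cdot\nabla b\|_{H^s}$ requires two derivatives on $b$, i.e. control of $\|b\|_{H^{s+1}}$-type quantities that we do not have uniformly; the resolution is that this term is paired against $v_q$ and one integrates by parts to move a derivative onto $v_q$, or better, one writes $\Delta_q(b\cdot\nabla b)\cdot v_q$ and uses the smoothing $\widetilde L^1_tH^s$ norm of $b$ (which we are simultaneously bounding) to absorb $\|b\|_{\widetilde L^1_tH^s}\|b\|_{\widetilde L^\infty_tH^{s-1}}$ — the parabolic gain on $b$ is exactly what compensates the derivative loss.

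So the structure of the argument is: (i) from the already-proven $\|\omega\|_{L^\infty_tL^\infty}\leq\Phi_2(t)$, the Biot--Savart law and a log-interpolation, deduce $\|\nabla v\|_{L^1_tL^\infty}\leq\Phi_2(t)$; (ii) set up the frequency-localized energy estimates for $(v,b)$ in $\widetilde L^\infty_tH^s\times(\widetilde L^\infty_tH^{s-2}\cap\widetilde L^1_tH^s)$, getting a closed inequality of the form
\begin{equation*}
X(t)\leq C_0+C\int_0^t\big(\|\nabla v(\tau)\|_{L^\infty}+\|b(\tau)\|_{B^0_{\infty,1}}+\|v^r/r(\tau)\|_{L^\infty}+\|b(\tau)\|_{B^0_{\infty,1}}^2\big)X(\tau)\,d\tau
\end{equation*}
where $X(t):=\|v\|_{\widetilde L^\infty_tH^s}+\|b\|_{\widetilde L^\infty_tH^{s-2}}+\|b\|_{\widetilde L^1_tH^s}$, the coupling term $b\cdot\nabla b$ being absorbed by the $\widetilde L^1_tH^s$ smoothing norm of $b$ on the left after a Young inequality; (iii) apply Gronwall, noting that all the integral weights are bounded by $\Phi_2(t)$ and $\|b\|_{L^2_tB^0_{\infty,1}}^2\leq\Phi_2(t)$, which upgrades $X(t)$ to $\Phi_5(t)$ (the three extra exponentials come from: one Gronwall application, plus the two already stacked inside $\Phi_2$, plus possibly one more from handling the logarithmic loss in the commutator via Osgood rather than Gronwall — hence level $5$ rather than $3$). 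The main obstacle, as indicated, is the derivative imbalance in the Lorentz force $b\cdot\nabla b$ at top regularity $H^s$: one must not estimate it in isolation but pair it with the magnetic dissipation, trading the missing derivative on $b$ against the parabolic smoothing gain $\|b\|_{\widetilde L^1_tH^s}$; getting the constants and the bookkeeping of the $\ell^1$-in-$q$ sums right (so that the Chemin--Lerner norms close rather than the weaker $L^1_tH^s$ ones) is the technical heart of the proof.
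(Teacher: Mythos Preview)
There are two genuine gaps.

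\textbf{The Lorentz-force term.} Your proposed handling of $\|b\cdot\nabla b\|_{\widetilde L^1_tH^s}$ does not close. A Kato--Ponce / Bony estimate on $b\cdot\nabla b$ in $H^s$ always produces a term $\|b\|_{L^\infty}\|\nabla b\|_{H^s}\sim\|b\|_{L^\infty}\|b\|_{H^{s+1}}$; integration by parts against $v_q$ only trades this for $\|b\otimes b\|_{H^{s+1}}$, which is no better. The bound you write, $\|b\|_{\widetilde L^1_tH^s}\|b\|_{\widetilde L^\infty_tH^{s-1}}$, is (i) false as a product estimate at level $H^s$ and (ii) unusable anyway, since $X(t)$ controls only $\|b\|_{\widetilde L^\infty_tH^{s-2}}$, not $H^{s-1}$. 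The paper resolves this by exploiting the axisymmetric structure twice: first the identity $\mathrm{curl}(b\cdot\nabla b)=-\partial_z\big(\tfrac{b^\theta}{r}b\big)$, and then the special product law of Proposition~\ref{prop:app}(i), namely $\big\|\tfrac{b^\theta}{r}b\big\|_{\widetilde L^1_tH^s}\lesssim\big\|\tfrac{b^\theta}{r}\big\|_{L^\infty_tL^\infty}\|b\|_{\widetilde L^1_tH^s}$. That product law is \emph{not} a generic tame estimate; its proof uses $b^1(x_1,0,z)=0$ to Taylor-expand $\tfrac{b^1}{x_2}$ and thereby avoid putting any derivative on the $\tfrac{b^\theta}{r}$ factor. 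Without this ingredient the $H^s$ energy estimate for $v$ simply does not close.

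\textbf{The Lipschitz bound on $v$.} Your step (i), ``from $\|\omega\|_{L^\infty_tL^\infty}\le\Phi_2(t)$ and Biot--Savart deduce $\|\nabla v\|_{L^1_tL^\infty}\le\Phi_2(t)$'', is not available: the singular integral in Biot--Savart is unbounded on $L^\infty$, and the log-interpolation you mention is precisely $\|\nabla v\|_{L^\infty}\lesssim\|v\|_{L^2}+\|\omega\|_{L^\infty}\log(e+\|v\|_{H^s})$, which requires $\|v\|_{H^s}$. The paper therefore runs the argument in the opposite order: first derive $X(t)\le\Phi_3(t)\,e^{C\|\nabla v\|_{L^1_tL^\infty}}$ (this is \eqref{lipv}), then insert the log-estimate to get a closed Gronwall inequality for $\|\nabla v\|_{L^1_tL^\infty}$, and only then feed the resulting $\Phi_4$ bound back into \eqref{lipv}. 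Your ordering would leave the loop open.
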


\begin{proof}
We localize in frequency the equation of the velocity in \eqref{MHD}. For  $q\in\mathbb{N}\cup\{-1\}$ we set $v_q:=\Delta_q v$
and $b_q :=\Delta_q b$. Then, we have
$$
\partial_t v_q+v\cdot\nabla v_q+\nabla p_q=\Delta_q\big(b\cdot\nabla b\big)-[\Delta_q,v \cdot \nabla]v.
$$
Taking the $L^2$-inner product with $v_q$ and using the incompressibility of $v$ and $v_q$  we get
\begin{align*}
\frac{d}{dt}\Vert v_q(t)\Vert_{L^{2}}& \leq \Vert \Delta_q(b\cdot\nabla b)\Vert_{L^{2}}+\Vert [\Delta_q,v \cdot \nabla]v\Vert_{L^{2}}.
\end{align*}
Then integrating in time gives
\begin{align*}
\Vert v_q(t)\Vert_{L^{2}}& \leq \Vert v_q(0)\Vert_{L^{2}}+ \Vert\Delta_q(b\cdot\nabla b)\Vert_{{L}^1_t  L^{2}}+\Vert [\Delta_q,v \cdot \nabla]v\Vert_{{L}^1_t  L^{2}}.
\end{align*}
Multiplying this last inequality by $2^{qs}$, then taking the $\ell^2$-norm we find 
\begin{align}\label{vv}
\Vert v\Vert_{\widetilde{L}^\infty_t H^{s}}&\lesssim \Vert v_0\Vert_{H^{s}}+\Vert  b\cdot\nabla b\Vert_{\widetilde{L}^1_t H^{s}}+\big\|\big(2^{qs}\Vert [\Delta_q, v \cdot \nabla] v \Vert_{L_t^1L^2}\big)_q\big\|_{\ell^2}.
\end{align}
Next we shall make use of the following commutator estimate, see for instance Lemma B.5 in \cite{D}, 
\be\label{comest1}
\big\|\big(2^{qs}\Vert [\Delta_q, v \cdot \nabla] v \Vert_{L_t^1L^2}\big)_q\big\|_{\ell^2}\lesssim \int_0^t\|\nabla v(\tau)\|_{L^\infty}\Vert v(\tau)\Vert_{ H^{s}}d\tau.
\ee
Moreover, since $\textnormal{curl} (b\cdot\nabla b)=-\partial_z \Big(\frac{ b^\theta}{r}b\Big)$ then   by Proposition \ref{prop:app} we get
\begin{align*}
\notag\Vert b\cdot\nabla b\Vert_{ \widetilde{L}^1_t H^{s}} &\leq \Vert b\cdot\nabla b\Vert_{ \widetilde{L}^1_t L^{2}} +\Vert \textnormal{curl} (b\cdot\nabla b)\Vert_{ \widetilde{L}^1_t H^{s-1}}
\\ \notag  &\lesssim \Vert  b\Vert_{ \widetilde{L}^\infty_t L^{2}} \Vert \nabla b\Vert_{ {L}^1_tL^{\infty}}+\Big\Vert \frac{ b^\theta}{r}b\Big\Vert_{ \widetilde{L}^1_tH^{s}}\\
&\lesssim\Vert  b\Vert_{{L}^\infty_t L^{2}} \Vert \nabla b\Vert_{{L}^1_t L^{\infty}}+\Big\Vert \frac{b^\theta}{r} \Big\Vert_{{L}_t^\infty L^\infty}\Vert b\Vert_{\widetilde{L}_t^1 H^{s}} .
\end{align*}
This together with \eqref{comest1} and  \eqref{vv} gives
\begin{align}\label{vv2}
\Vert v\Vert_{\widetilde{L}^\infty_t H^{s}}&\lesssim \Vert v_0\Vert_{H^{s}}+\Vert  b\Vert_{{L}^\infty_t L^{2}} \Vert \nabla b\Vert_{{L}^1_t L^{\infty}}
 +\Big\Vert \frac{b^\theta}{r}\Big\Vert_{{L}^1_t  L^\infty}\Vert b\Vert_{\widetilde{L}_t^1 H^{s}}+\int_0^t\|\nabla v(\tau)\|_{L^\infty}\Vert v(\tau)\Vert_{ H^{s}}d\tau .
\end{align}
Next, we shall estimate $\Vert b\Vert_{\widetilde{L}_t^1 H^{s}}$.  According to 
  \eqref{decoup-b0}, one has
\begin{align*}
\Vert b_q\Vert_{L_t^\infty L^2}+2^{2q}\Vert b_q\Vert_{L_t^1L^2} &\lesssim \Vert b_q(0)\Vert_{L^2}+\Vert \Delta_q ( b\cdot\nabla v)\Vert_{L_t^1L^2}+\Vert [\Delta_q, v \cdot \nabla] b(\tau) \Vert_{L_t^1L^2}\cdot
\end{align*}
It follows that
\begin{align*}
\Vert b\Vert_{\widetilde{L}^\infty_t H^{s-2}}+\Vert b\Vert_{\widetilde{L}^1_t H^{s }}&\lesssim \Vert b_0\Vert_{H^{s-2}}+\Vert \Delta_{-1} b\Vert_{L_t^1L^2}+\Vert b\cdot\nabla v\Vert_{\widetilde{L}^1_t H^{s-2 }} 
+\big\|\big(2^{q(s-2)}\Vert [\Delta_q, v \cdot \nabla] b(\tau) \Vert_{L_t^1L^2}\big)_q\big\|_{\ell^2}.
\end{align*}
Since $s-2>\frac12$ then $H^{s-2}\cap L^\infty$ is an algebra and, by Lemma \ref{le1}, we get
\begin{align*}
\Vert b\cdot\nabla v\Vert_{\widetilde{L}_t^1 H^{s-2}} &\lesssim\Vert b\cdot\nabla v\Vert_{{L}_t^1 H^{s-2}}
\lesssim  \int_{0}^t\big(\Vert b(\tau)\Vert_{ H^{s-2}}\|\nabla v(\tau)\|_{L^\infty}+\|b (\tau)\|_{L^\infty}\Vert  v(\tau)\Vert_{ H^{s-1}}\big)d\tau,
\end{align*}
Now, in order to estimate the commutator term, we shall 
use the estimate, (see for instance Lemma 2.100 in \cite{BCD})
$$
\big\|\big(2^{q(s-2)}\Vert [\Delta_q, v \cdot \nabla] b \Vert_{L_t^1L^2}\big)_q\big\|_{\ell^2}\lesssim \int_{0}^t\big(\|\nabla v(\tau)\|_{L^\infty}\Vert b(\tau)\Vert_{H^{s-2}}+\|\nabla b(\tau)\|_{L^\infty}\Vert v(\tau)\Vert_{H^{s-2}}\big)d\tau.
$$
Combining the three last inequalities and using the embedding $H^s\hookrightarrow H^{s-2 }$ we deduce that
\begin{align}\label{bb2}
\nonumber&\Vert b\Vert_{\widetilde{L}^\infty_t H^{s-2}}+\Vert b\Vert_{\widetilde{L}^1_t H^{s }}\lesssim \Vert b_0\Vert_{H^{s-2}}+\Vert b\Vert_{L_t^1L^2}
\\ &+ \int_{0}^t\Big(\|\nabla v(\tau)\|_{L^\infty}\Vert b(\tau)\Vert_{H^{s-2}}+\big(\|\nabla b(\tau)\|_{L^\infty}+\| b(\tau)\|_{L^\infty}\big)\Vert v(\tau)\Vert_{H^{s}}\Big)d\tau.
\end{align}
Putting together \eqref{vv2} and \eqref{bb2} and using Proposition \ref{prop:energy} we find
\begin{align*}
\Vert v\Vert_{\widetilde{L}^\infty_t H^{s}}+\Vert b\Vert_{\widetilde{L}^\infty_t H^{s-2}}+\Vert b\Vert_{\widetilde{L}^1_t H^{s }}&\leq C_0\big(1+t+ \Vert \nabla b\Vert_{{L}^1_t L^{\infty}}\big)
\\ &\quad+C_0 \int_{0}^t\|\nabla v(\tau)\|_{L^\infty} \Big(\Vert b(\tau)\Vert_{H^{s-2}}+\Vert v(\tau)\Vert_{H^{s}}\Big)d\tau
\\ &\quad+C_0\int_{0}^t\big(\|\nabla b(\tau)\|_{L^\infty}+\| b(\tau)\|_{L^\infty}\big)\Vert v(\tau)\Vert_{H^{s}}d\tau.
\end{align*}
Then, an application of the Gronwall inequality gives rise to
\begin{align*}
&\Vert v\Vert_{\widetilde{L}^\infty_t H^{s}}+\Vert b\Vert_{\widetilde{L}^\infty_t H^{s-2}}+\Vert b\Vert_{\widetilde{L}^1_t H^{s }}\leq C_0(t+1)e^{C_0(\|\nabla v\|_{L^1_tL^\infty} +\|\nabla b\|_{L^1_tL^\infty} +\| b\|_{L^1_tL^\infty})}.
\end{align*}
By Proposition \ref {prop:energy}-(iii), the  embeddings   
\begin{align*}
&H^{s-2}\hookrightarrow L^{\sigma} \hookrightarrow B_{{\sigma},1}^{\frac{3}{\sigma}-1},\\
&B_{{\sigma},1}^{\frac{3}{\sigma}+1}\hookrightarrow \textnormal{Lip}(\R^3),
\end{align*}
for some $\sigma>3$, and Proposition \ref{propo:bound-vort} we deduce that
\begin{align}\label{lipv}
&\Vert v\Vert_{\widetilde{L}^\infty_t H^{s}}+\Vert b\Vert_{\widetilde{L}^\infty_t H^{s-2}}+\Vert b\Vert_{\widetilde{L}^1_t H^{s }}\leq \Phi_3(t) e^{C\|\nabla v\|_{L^1_tL^\infty}}.
\end{align}
Now, to get the global bound for the Lipschitz norm of the velocity we use the classical logarithmic estimate: for $s>\frac52$,
$$
\|\nabla v\|_{L^\infty}\lesssim \| v\|_{L^2}+\|\omega\|_{L^\infty}\log\big(e+\| v\|_{H^s}\big).
$$
Therefore, by \eqref{lipv} and Proposition \ref{propo:bound-vort},
$$
\|\nabla v\|_{L^\infty}\leq \Phi_3(t)\Big(1+\int_0^t\|\nabla v(\tau)\|_{  L^\infty}\Big).
$$
By Gronwall's inequality we get
$$
\|\nabla v\|_{L^\infty}\leq \Phi_4(t).
$$
Plugging  this estimate into \eqref{lipv} we conclude that
\begin{align*}
&\Vert v\Vert_{\widetilde{L}^\infty_t H^{s}}+\Vert b\Vert_{\widetilde{L}^\infty_t H^{s-2}}+\Vert b\Vert_{\widetilde{L}^1_t H^{s }}\le \Phi_5(t) .
\end{align*}
This completes the proof of the proposition.\qed
\end{proof}
\subsection{Existence}
We will briefly outline the proof of the existence part which is
classical. We smooth out the initial data
$$v^{n}_{0}=S_{n} v^0=2^{3n}g(2^n.)\ast v_0\quad\textnormal{and}\quad b^{n}_{0}=S_{n}b^0=2^{3n}g(2^n.)\ast b_0.$$ 
where $S_n$ is the cut-off frequency defined in Section 2. Because $g$ is radial then the functions $v^{n}_{0}$ and   $b^{n}_{0}$ remain axisymmetric. Moreover, this family is uniformly bounded in the  space of initial data.
Then, by using standard arguments based on the a priori estimates given by Proposition \ref{prop:energy}, Proposition \ref{prop:gamma}, Proposition \ref{propo:bound-vort} and Proposition \ref{prop:sub-c} we can construct a unique global solution $(v_n,b_n)_{n\in \mathbb{N}}$ in the following space
$$
v_n\in \mathcal{C}(\R_+;L^2)\cap L^1(\R_+;W^{1,\infty})\quad \textnormal{and}\quad b_n\in \mathcal{C}(\R_+;H^{s-2})\cap L^1(\R_+;W^{1,\infty}).
$$
The control is uniform with respect to the parameter $n$. Therefore 
by standard compactness arguments we can show that this family $(v_n,b_n)_{n\in \mathbb{N}}$ converges to some $(v,b)$ which satisfies our initial value problem. We omit here the details and we will next focus on the uniqueness part.

\subsection{Uniqueness result} We shall  prove the uniqueness result for the system \eqref{MHD} in the following space
$$(v,b)\in \mathcal{X}:=\big(\mathcal{C}(\mathbb{R}_+,L^2)\cap L^{1}(\mathbb{R}_+;{W}^{1,\infty})\big)^2.$$
We take two solutions $(v_j,b_j)$, with $j=1,2$ for \eqref{MHD} belonging to the space $\mathcal{X}$  with  initial data $(v^0_j,b^0_j)$, $j=1,2$ and we denote
$$v=v_2-v_1\qquad\textnormal{and}\qquad b=b_2-b_1.$$
Then we find the equations
\begin{equation}\label{s111} 
\left\{ \begin{array}{ll} 
\partial_{t} v+v_{2}.\nabla v+\nabla p=- v\cdot\nabla v_{1}+b_{2}.\nabla b+b\cdot\nabla b_{1},\\ 
\partial_{t}b+v_{2}.\nabla b-\Delta b=-v\cdot\nabla b_{1}+b_{2}.\nabla v+b\cdot\nabla v_{1},\\
v_{| t=0}= v^{0}, \quad b_{| t=0}=b^{0}.  
\end{array} \right.
\end{equation}  
Taking the $L^2$ inner product of the first equation of \eqref{s111} with $v$ and  integrating by parts, $$\frac{1}{2}\frac{d}{dt}\Vert v(t)\Vert^{2}_{L^2}=- \int_{\R^3} (v\cdot\nabla v_{1}) . v \, dx-\int_{\R^3} ( b_2. \nabla v) . b \, dx-\int_{\R^3} ( b\cdot\nabla v) . b_1 \, dx.$$
Taking  the $L^2$ inner product of the second equation of \eqref{s111} with $b$  we get 
\begin{align}\label{11}
\nonumber \frac{1}{2}\frac{d}{dt}\Vert b(t)\Vert^{2}_{L^2}+\int_{\mathbb{R}^3}|\nabla  b(t,x)|.  b_q dx&=- \int_{\R^3} (v\cdot\nabla b_1) . b \, dx+\int_{\R^3} ( b_2. \nabla v) . b \, dx+\int_{\R^3} ( b\cdot\nabla v) . b_1 \, dx.
\end{align}
Summing the two last estimate and using H\"older and Young inequalities, we get 
$$
\frac{1}{2}\frac{d}{dt}\Big(\Vert v(t)\Vert^{2}_{L^2}+\Vert b(t)\Vert^{2}_{L^2}\Big)\le \Big(\Vert\nabla v_{1}(t)\Vert_{L^\infty}+\Vert\nabla\rho_{1}(t)\Vert_{L^\infty}\Big)\Big(\Vert v(t)\Vert^{2}_{L^2}+\Vert b(t)\Vert^{2}_{L^2}\Big).$$
Gronwall's inequality yields
\begin{equation}\label{8}
\Vert v(t)\Vert^{2}_{L^2}+\Vert b(t)\Vert^{2}_{L^2}\le \Big(\Vert v_0\Vert^{2}_{L^2}+\Vert b_0\Vert^{2}_{L^2}\Big) \exp\Big(2\big({\Vert\nabla v_{1}(t)\Vert_{L_{t}^{1}L^\infty}+\Vert\nabla\rho_{1}(t)\Vert_{L_{t}^{1}L^\infty}}\big)\Big).
\end{equation}
This proves the uniqueness result.

\section{Critical regularities}

This section is devoted to the proof of Theorem \ref{thm:critical}. We restrict ourselves to the a priori estimates. The existence and uniqueness parts can be
easily obtained in a classical way.

\subsection{Vorticity decomposition}

In order to get a global bound on the quantity $\Vert\omega(t)\Vert_{B^{0}_{\infty,1}}$, we shall use the approach  developed in the 3D axisymmetric  Euler case in \cite{AHK}. We 
write a  decomposition of the vorticity based on the special structure
of axisymmetric flows. This later one  is the basic tool to get the Lipschitz   bound of the velocity.
\begin{proposition}\label{prop-a4} There exists a decomposition $(\widetilde{\omega}_{j})_{j\ge-1}$ of $\omega$ such that 
\begin{enumerate}[label=\rm(\roman*)]
\item For every $t \in\R_+,$ $\omega(t,x)=\displaystyle \sum_{j \ge -1}\widetilde{\omega}_{j}(t,x).$
\item For every $t \in\R_+,$ $j\ge -1$, $\textnormal{div}\;\widetilde{\omega}_{j}(t,x)=0.$
\item For all $j\ge -1$ we have
$$\Vert\widetilde{\omega}_{j}(t)\Vert_{L^\infty}\le C \Big(\Vert\Delta_{j}\omega^{0}\Vert_{L^\infty}+2^{j}\Big\Vert \Delta_{j}\Big(\frac{b^\theta}{r}b_1\Big)\Big\Vert_{L_{t}^{1}L^\infty}\Big)\Phi_{2}(t).$$
\item For all $ k, j \ge-1$ we have
$$\Vert\Delta_{k}\widetilde{\omega}_{j}(t)\Vert_{L^\infty}\lesssim 2^{-\vert k-j \vert}e^{C V(t)}\Big(\Vert\Delta_{j}\omega^{0}\Vert_{L^\infty}+ 2^{j}\Big\Vert \Delta_{j}\Big(\frac{b^\theta}{r}b\Big)\Big\Vert_{L_{t}^{1}L^\infty}\Big),$$
with $V(t):=\Vert v(t) \Vert_{L_t^1 B_{\infty,1}^{1}}.$
\end{enumerate}
\end{proposition}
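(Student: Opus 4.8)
The plan is to construct the family $(\widetilde\omega_j)_{j\ge-1}$ by solving, for each dyadic block, the linear vorticity equation driven only by the $j$-th Littlewood--Paley piece of the data and of the force $\textnormal{curl}(b\cdot\nabla b)$. Precisely, let $(v,b)$ be the solution of \eqref{MHD} produced by the a priori bounds of the previous sections, and, recalling from \eqref{bnablab} that $\textnormal{curl}(b\cdot\nabla b)=-\partial_z\big(\frac{b^\theta}{r}b\big)$, let $\widetilde\omega_j$ be the unique global solution of
\[
\partial_t\widetilde\omega_j+v\cdot\nabla\widetilde\omega_j=\widetilde\omega_j\cdot\nabla v-\partial_z\Delta_j\Big(\frac{b^\theta}{r}\,b\Big),\qquad \widetilde\omega_j|_{t=0}=\Delta_j\omega^{0}.
\]
Equivalently, $\widetilde\omega_j$ is the transport of $\Delta_j\omega^{0}$ by the vorticity flow of $v$ plus the Duhamel term generated by the localized force. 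Summing in $j$ and using $\sum_j\Delta_j=\textnormal{Id}$, the field $\sum_j\widetilde\omega_j$ solves \eqref{voticityequation} with data $\omega^{0}$ and force $-\partial_z(\frac{b^\theta}{r}b)=\textnormal{curl}(b\cdot\nabla b)$; hence, by the uniqueness for \eqref{m2} (Proposition \ref{prop:geo2}), $\omega=\sum_j\widetilde\omega_j$, provided the series converges in $\mathcal C(\R_+;L^\infty)$ — which will follow from (iii) below together with the regularity already established for $\omega^{0}$ and for $\frac{b^\theta}{r}b$. This is (i).

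For (ii) the idea is that the pure-swirl structure is propagated. Since the Littlewood--Paley multipliers are radial they commute with rotations and reflections, hence preserve both axisymmetry and the swirl-free / pure-swirl splitting of axisymmetric vector fields (cf. Proposition \ref{prop:geo}); thus $\Delta_j\omega^{0}$, $\Delta_j(\frac{b^\theta}{r}b)$, and therefore $-\partial_z\Delta_j(\frac{b^\theta}{r}b)$, are all of the form $(\,\cdot\,)e_\theta$, with vanishing $e_r$- and $e_z$-components. Running the argument from the proof of Proposition \ref{prop:geo2}(ii) on the linear equation above — the radial and vertical components of $\widetilde\omega_j$ solve homogeneous transport equations and hence vanish — gives $\widetilde\omega_j=\widetilde\omega_j^\theta e_\theta$. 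In particular $\textnormal{div}\,\widetilde\omega_j=\frac1r\partial_\theta\widetilde\omega_j^\theta=0$, which is (ii), and moreover $\widetilde\omega_j\cdot\nabla v=\frac{v^r}{r}\widetilde\omega_j$, an identity used below. Then (iii) is obtained by propagating $\widetilde\omega_j$ along the characteristics $\psi_t$ of $v$: the above identity gives $\frac{d}{dt}\widetilde\omega_j(t,\psi_t(\cdot))=\frac{v^r}{r}(t,\psi_t(\cdot))\,\widetilde\omega_j(t,\psi_t(\cdot))-\partial_z\Delta_j(\frac{b^\theta}{r}b)(t,\psi_t(\cdot))$, so Gronwall yields $\|\widetilde\omega_j(t)\|_{L^\infty}\le e^{\int_0^t\|v^r/r\|_{L^\infty}}\big(\|\Delta_j\omega^{0}\|_{L^\infty}+\int_0^t\|\partial_z\Delta_j(\frac{b^\theta}{r}b)(\tau)\|_{L^\infty}d\tau\big)$; by Bernstein $\|\partial_z\Delta_j(\cdot)\|_{L^\infty}\lesssim 2^j\|\Delta_j(\cdot)\|_{L^\infty}$, and by Proposition \ref{prop:gamma} $\int_0^t\|v^r/r\|_{L^\infty}\le\Phi_1(t)$, so the prefactor is $\le\Phi_2(t)$, which is exactly (iii).

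Finally, (iv) is the crux, and I would obtain it by estimating $\widetilde\omega_j$ in the two \emph{endpoint} Besov spaces $B^{1}_{\infty,1}$ and $B^{-1}_{\infty,\infty}$. Because $\Delta_k\Delta_j=0$ for $|k-j|\ge2$ (almost-orthogonality \eqref{orth}), the $B^{\pm1}$-norm of $\Delta_j\omega^{0}$ is $\approx2^{\pm j}\|\Delta_j\omega^{0}\|_{L^\infty}$ and that of $\partial_z\Delta_j(\frac{b^\theta}{r}b)$ is $\lesssim2^{\pm j}\,2^{j}\|\Delta_j(\frac{b^\theta}{r}b)\|_{L^\infty}$; applying the transport estimate of Proposition \ref{prop:trans-besov} at $s=\pm1$ — whose growth factor is precisely $e^{CV(t)}$ with $V(t)=\|v\|_{L^1_tB^1_{\infty,1}}$ — should then give $\|\widetilde\omega_j(t)\|_{B^{\pm1}_{\infty,\cdot}}\lesssim e^{CV(t)}2^{\pm j}\big(\|\Delta_j\omega^{0}\|_{L^\infty}+2^j\|\Delta_j(\frac{b^\theta}{r}b)\|_{L^1_tL^\infty}\big)$, whence $\|\Delta_k\widetilde\omega_j\|_{L^\infty}\le\min\bigl(2^{-k}\|\widetilde\omega_j\|_{B^1_{\infty,1}},\,2^{k}\|\widetilde\omega_j\|_{B^{-1}_{\infty,\infty}}\bigr)\lesssim2^{-|k-j|}(\cdots)$, i.e. (iv). The step I expect to be the genuine obstacle is controlling the vortex-stretching term $\widetilde\omega_j\cdot\nabla v=\frac{v^r}{r}\widetilde\omega_j$ inside these endpoint estimates: multiplication by the merely bounded coefficient $v^r/r$ does not preserve $B^{\pm1}_{\infty,\cdot}$, so a Gronwall cannot be closed naively there. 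As in the 3D axisymmetric Euler analysis of \cite{AHK}, this is circumvented by exploiting the pure-swirl geometry of (ii) — so that $\widetilde\omega_j$ is genuinely \emph{carried} by the flow up to the scalar stretching factor $r\mapsto R(t,\cdot)/r$, controlled through Proposition \ref{prop:gamma} — rather than treating $\omega\cdot\nabla v$ as an unstructured force. Summing (iv) over $j$ then gives the sought control of $\|\omega\|_{B^0_{\infty,1}}$, and hence of the Lipschitz norm of $v$, in terms of $V(t)$, which is the whole point of the decomposition.
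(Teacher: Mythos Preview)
Your proposal is correct and follows the same route as the paper: define $\widetilde\omega_j$ as the solution of the linearized vorticity equation with localized data $\Delta_j\omega^0$ and localized force $-\partial_z\Delta_j(\frac{b^\theta}{r}b)$, obtain (i) by linearity/uniqueness, (ii)--(iii) from Proposition~\ref{prop:geo2} plus Gronwall with the $\|v^r/r\|_{L^\infty}$ bound of Proposition~\ref{prop:gamma}, and (iv) by the two endpoint estimates $B^{-1}_{\infty,\infty}$ and $B^{1}_{\infty,1}$ combined with $\|\Delta_k\widetilde\omega_j\|_{L^\infty}\le\min(2^{-k}\|\widetilde\omega_j\|_{B^1_{\infty,1}},2^{k}\|\widetilde\omega_j\|_{B^{-1}_{\infty,\infty}})$.

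One clarification on the ``genuine obstacle'' you flag in (iv): the two endpoints are not symmetric. At $s=-1$ the stretching term is handled \emph{without} any axisymmetric trick---one simply keeps it in the form $\widetilde\omega_j\cdot\nabla v$ and uses the paraproduct estimate $\|\widetilde\omega_j\cdot\nabla v\|_{B^{-1}_{\infty,\infty}}\lesssim\|v\|_{B^1_{\infty,1}}\|\widetilde\omega_j\|_{B^{-1}_{\infty,\infty}}$, which closes a Gronwall directly. The axisymmetric geometry is needed only at $s=+1$: there the paper writes $\frac{v^r}{r}\widetilde\omega_j^1=v^2\frac{\widetilde\omega_j^1}{x_2}$ in Cartesian coordinates, uses the vanishing $\widetilde\omega_j^1(x_1,0,z)=0$ (from (ii)) together with Taylor's formula and the anisotropic dilation bound \eqref{prop-a5} to get $\|\widetilde\omega_j^1/x_2\|_{B^0_{\infty,1}}\lesssim\|\widetilde\omega_j^1\|_{B^1_{\infty,1}}$, and then the product estimate \eqref{z13} from \cite{AHK} closes the loop. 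Your phrase ``carried by the flow up to the scalar stretching factor $r\mapsto R(t,\cdot)/r$'' is not quite what is done; the argument stays at the level of Besov estimates rather than going back to the flow map.
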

\begin{proof}
Denote by $(\widetilde{\omega}_{j})_{j\ge-1}$ the unique global solution of the  equation,
\begin{equation}\label{rt}
\left\{
\begin{array}{ll} 
\partial_{t}\widetilde{\omega}_{j}+v\cdot\nabla\widetilde{\omega}_{j}=\widetilde{\omega}_{j}.\nabla v-\partial_z\Big[\Delta_j\Big(\frac{b^\theta}{r} b\Big)\Big],\\ 
\widetilde{\omega}_{j}(t=0)=\Delta_j\omega^{0}.
\end{array} \right.
\end{equation} 
Since $\textnormal{div}(\Delta_{j} \omega^0)=0$ then from Proposition \ref{prop:geo}-(i) we have $$\textnormal{div}\,\widetilde{\omega}_{j}(t)=0.$$ 
By linearity and uniqueness we have
\begin{equation*}
\omega(t,x)=\sum_{j\ge-1}\widetilde{\omega}_{j}(t,x).
\end{equation*}

\smallskip

Next we shall prove the estimate in (iii). Since $\Delta_j\omega^0=\textnormal{curl}(\Delta_jv^0)$ and $\Delta_jv^0$ is axisymmetric then Proposition \ref{prop:geo} implies that $\Delta_j\omega^0\times e_\theta=(0,0,0)$. Therefore, by Proposition \ref{prop:geo2} we get $\widetilde{\omega}_{j}(t)\times e_\theta=(0,0,0)$ and
$$
\partial_{t}\widetilde{\omega}_{j}+v\cdot\nabla\widetilde{\omega}_{j}= \widetilde{\omega}_{j}\frac{v^r}{r}-\partial_z\Big[\Delta_j\Big(\frac{b^\theta}{r} b\Big)\Big].
$$ 
By the maximum principle, Propositions \ref{prop:gamma} and Bernstein inequality we get
\begin{eqnarray*}
\Vert\widetilde{\omega}_{j}(t)\Vert_{L^\infty}&\le&\Vert\Delta_j\omega^0\Vert_{L^\infty}+\int_0^t \Big\Vert\frac{v^r(\tau)}{r}\Big\Vert_{L^\infty}\Vert\widetilde{\omega}_{j}(\tau)\Vert_{L^\infty}d\tau+\int_0^t \Big\Vert \partial_z\Big[\Delta_j\Big(\frac{b^\theta}{r} b\Big)(\tau)\Big]\Big\Vert_{L^\infty}d\tau\\
&\lesssim& \Vert\Delta_j\omega^0\Vert_{L^\infty}+ \int_0^t \Phi_{1}(\tau)\Vert\widetilde{\omega}_{j}(\tau)\Vert_{L^\infty}d\tau+2^{j}\int_0^t \Big\Vert \Delta_j\Big(\frac{b^\theta}{r}b\Big) (\tau)\Big\Vert_{L^\infty}d\tau.
\end{eqnarray*} 
Then, the Gronwall's inequality implies that
\begin{equation*}
\Vert\widetilde{\omega}_{j}(t)\Vert_{L^\infty}\le C\Big(\Vert\Delta_j\omega^0\Vert_{L^\infty}+2^{j}\Big\Vert \Delta_j\Big(\frac{b^\theta}{r}b\Big)\Big\Vert_{L_{t}^{1}L^\infty}\Big)\Phi_{2}(t).
\end{equation*}
This concludes (iii).

\smallskip

In order to prove (iv), we shall equivalently show that
\begin{equation}\label{t1}
\Vert\Delta_{k}\widetilde{\omega}_{j}(t)\Vert_{L^\infty}\lesssim 2^{k-j}e^{C V(t)}\Big(\Vert \Delta_{j}\omega^{0}\Vert_{L^\infty} + 2^{j}\Big\Vert \Delta_j\Big(\frac{b^\theta}{r}b\Big)\Big\Vert_{L_{t}^{1}L^\infty}\Big)
\end{equation}
and
\begin{equation}\label{t2}
\Vert\Delta_{k}\widetilde{\omega}_{j}(t)\Vert_{L^\infty}\lesssim 2^{j-k}e^{C V(t)}\Big(\Vert\Delta_{j}\omega^{0}\Vert_{L^\infty} + 2^{j}\Big\Vert\Delta_j \Big(\frac{b^\theta}{r}b\Big)\Big\Vert_{L_{t}^{1}L^\infty}\Big).
\end{equation}

\smallskip

{\bf Proof of \eqref{t1}}. Applying Proposition \ref{prop:trans-besov} to the equation \eqref{rt} yields
\begin{eqnarray}\label{y}
\nonumber e^{-CV(t)}\Vert\widetilde{\omega}_{j}(t)\Vert_{B^{-1}_{\infty,\infty}}&\lesssim& \Vert\Delta_{j}\omega^0\Vert_{B^{-1}_{\infty,\infty}}+ \int_0^t e^{-CV(\tau)}\Vert\widetilde{\omega}_{j}.\nabla v(\tau)\Vert_{B^{-1}_{\infty,\infty}}d\tau\\
&+& \int_0^t e^{-CV(\tau)}\Big\Vert \partial_z\Big[\Delta_j\Big(\frac{b^\theta}{r} b\Big)(\tau)\Big] \Big\Vert_{B^{-1}_{\infty,\infty}}d\tau,
\end{eqnarray}
where $V(t)=\int_{0}^{t}\Vert v(\tau)\Vert_{B^{1}_{\infty,1}}d\tau.$
Using Bony's decomposition we can easily check that
\begin{equation}\label{z}
\Vert\widetilde{\omega}_{j}.\nabla v\Vert_{B^{-1}_{\infty,\infty}}\lesssim \Vert v \Vert_{B_{\infty,1}^1} \Vert\widetilde{\omega}_{j}\Vert_{B^{-1}_{\infty,\infty}}.
\end{equation}
On the other hand, by using  H\"older and Bernstein inequalities  we obtain
\begin{eqnarray*}
\int_0^t e^{-CV(\tau)}\Big\Vert \partial_z\Big[\Delta_j\Big(\frac{b^\theta}{r} b\Big)\Big](\tau)\Big\Vert_{B^{-1}_{\infty,\infty}}d\tau 
&\le& \Big\Vert \partial_z\Big[\Delta_j\Big(\frac{b^\theta}{r} b\Big)\Big]\Big\Vert_{L_{t}^{1}B^{-1}_{\infty,\infty}} \\
&\lesssim& \Big\Vert \Delta_j\Big(\frac{b^\theta}{r}b\Big)\Big\Vert_{L_{t}^{1} B^{0}_{\infty,\infty}}\\
&\lesssim& \Big\Vert \Delta_j\Big(\frac{b^\theta}{r}b\Big) \Big\Vert_{L_{t}^{1} L^\infty}.
\end{eqnarray*}
Inserting the two last estimates into \eqref{y} yields,
\begin{equation*}
e^{-CV(t)}\Vert\widetilde{\omega}_{j}(t)\Vert_{B^{-1}_{\infty,\infty}}\lesssim \Vert\Delta_{j}\omega^0\Vert_{B^{-1}_{\infty,\infty}}+ \Big\Vert \Delta_j\Big(\frac{b^\theta}{r}b\Big) \Big\Vert_{L_{t}^{1} L^\infty}+\int_0^t e^{-CV(\tau)}\Vert\widetilde{\omega}_{j}\Vert_{B^{-1}_{\infty,\infty}}\Vert v(\tau)\Vert_{B_{\infty,1}^1} d\tau.
\end{equation*}
Using Gronwall's inequality gives
\begin{eqnarray*}
\Vert\widetilde{\omega}_{j}(t)\Vert_{B^{-1}_{\infty,\infty}}&\lesssim& \Big(\Vert\Delta_{j}\omega^0\Vert_{B^{-1}_{\infty,\infty}}+ \Big\Vert \Delta_j\Big(\frac{b^\theta}{r}b\Big) \Big\Vert_{L_{t}^{1} L^\infty}\Big) e^{CV(t)}\\
&\lesssim& \Big(2^{-j} \Vert\Delta_{j}\omega^0\Vert_{L^{\infty}}+ \Big\Vert \Delta_j\Big(\frac{b^\theta}{r}b\Big) \Big\Vert_{L_{t}^{1} L^\infty}\Big) e^{CV(t)}. 
\end{eqnarray*}
Finally, by definition, we deduce  that
\begin{equation*}
\Vert\Delta_{k}\widetilde{\omega}_{j}(t)\Vert_{L^{\infty}}\lesssim 2^{k-j} e^{CV(t)}\Big(\Vert\Delta_{j}\omega^0\Vert_{L^{\infty}} +2^{j}  \Big\Vert \Delta_j\Big(\frac{b^\theta}{r}b\Big) \Big\Vert_{L_{t}^{1} L^\infty}\Big).
\end{equation*}
which is the desired result.

\smallskip

{\bf Proof of \eqref{t2}}. In this case it is   more convenient to work with the cartesian coordinates of   $\widetilde{\omega}_{j}=(\widetilde{\omega}_{j}^1,\widetilde{\omega}_{j}^2,0).$ The analysis will be exactly the same for both
components, so we will deal only with the first one.
Since $v^\theta=0$ the one may easily check that
 $$\frac{v^r}{r}=\frac{v^1}{x_1}=\frac{v^2}{x_2}. 
 $$ 
 Then,  the functions $\widetilde{\omega}_{j}^1$ solves the equation
\begin{equation*} 
\left\{\begin{array}{ll} 
\partial_{t}\widetilde{\omega}_{j}^1+v\cdot\nabla\widetilde{\omega}_{j}^1= v^2\frac{\widetilde{\omega}_{j}^1}{x_2}-\partial_z\Big[\Delta_j\Big(\frac{b^\theta}{r} b_1\Big)\Big], \\
\widetilde{\omega}_{j}^{1}(t=0)=\Delta_{j}\omega_{0}^{1}.  
\end{array} \right.
\end{equation*} 
Applying Proposition \ref{prop:trans-besov} to this last equation we get
\begin{eqnarray}\label{z1}
\nonumber e^{-CV(t)}\Vert\widetilde{\omega}_{j}^1(t)\Vert_{B^{1}_{\infty,1}}&\lesssim& \Vert\Delta_{j}\omega^1_0\Vert_{B^{1}_{\infty,1}}+ \int_0^t e^{-CV(\tau)}\Big\Vert v^2\frac{\widetilde{\omega}_{j}^1(\tau)}{x_2}\Big\Vert_{B^{1}_{\infty,1}}d\tau\\
&+& \int_0^t e^{-CV(\tau)}\Big\Vert \partial_z\Big[\Delta_j\Big(\frac{b^\theta}{r} b_1\Big)\Big](\tau)\Big\Vert_{B^{1}_{\infty,1}}d\tau.
\end{eqnarray}
We shall make use of the following estimate, proved in pages 31-32 from \cite{AHK}:
\begin{equation}\label{z13}
\Big\Vert v^2\frac{\widetilde{\omega}_{j}^1}{x_2}\Big\Vert_{B^{1}_{\infty,1}}\lesssim \Vert v \Vert_{B^{1}_{\infty,1}}\Big(\Vert\widetilde{\omega}^{1}_{j}\Vert_{B^{1}_{\infty,1}}+\Big\Vert\frac{\widetilde{\omega}^{1}_{j}}{x_2}\Big\Vert_{B^{0}_{\infty,1}}\Big). 
\end{equation}
Now we will estimate $\Big\Vert\frac{\widetilde{\omega}^{1}_{j}}{x_2}\Big\Vert_{L_{t}^{\infty} B^{0}_{\infty,1}}.$ By Proposition \ref{prop:geo} we have $\widetilde{\omega}^{1}_{j}(x_1,0,z)=0.$ Then,  by Taylor formula we get  
$$\frac{\widetilde{\omega}^{1}_{j}(x_1,x_2,z)}{x_2}=\int_{0}^{1}\partial_{x_{2}} \widetilde{\omega}^{1}_{j}(x_1,\tau x_2,z) d\tau.$$
At this stage we need to the following estimate, describing  the anisotropic dilatation in Besov space $B_{\infty,1}^0$, see Proposition A.1 from \cite{AHK}:
 for every $0<\tau<1$, we have
\be\label{prop-a5}
\Vert h_{\tau}\Vert_{B_{\infty,1}^0}\le C(1-\log \tau)\Vert h \Vert_{B_{\infty,1}^0},
\ee
where $h_{\tau}(x_1,x_2,x_3)=h(x_1,\tau x_2,x_3)$ and  $C$ is a absolute positive constant.
It follows that
\begin{align*}
\nonumber\Big\Vert\frac{\widetilde{\omega}^{1}_{j}}{x_2}\Big\Vert_{B^{0}_{\infty,1}}\le \Vert\partial_{x_{2}} \widetilde{\omega}^{1}_{j}\Vert_{B^{0}_{\infty,1}}\int_{0}^{1}(1-\log \tau)d\tau 
\lesssim \Vert\widetilde{\omega}^{1}_{j}\Vert_{B^{1}_{\infty,1}}.
\end{align*}
Inserting this estimate into \eqref{z13} gives
\begin{equation*}
\Big\Vert v^2\frac{\widetilde{\omega}_{j}^1}{x_2}\Big\Vert_{B^{1}_{\infty,1}}\lesssim \Vert v \Vert_{B^{1}_{\infty,1}}\Vert\widetilde{\omega}^{1}_{j}\Vert_{B^{1}_{\infty,1}}. 
\end{equation*}
On the other hand, using Bernstein inequalities we find 
\begin{eqnarray*}
\int_0^t e^{-CV(\tau)}\Big\Vert \partial_z\Big[\Delta_j\Big(\frac{b^\theta}{r} b_1\Big)\Big](\tau) \Big\Vert_{B^{1}_{\infty,1}}d\tau &\le& \Big\Vert  \partial_z\Big[\Delta_j\Big(\frac{b^\theta}{r} b_1\Big)\Big]\Big\Vert_{L_{t}^{1}B^{1}_{\infty,1}} \\
&\lesssim& \Big\Vert \Delta_j\Big(\frac{b^\theta}{r} b_1\Big)\Big\Vert_{L_{t}^{1} B^{2}_{\infty,1}}\\
&\lesssim& 2^{2j} \Big\Vert \Delta_j\Big(\frac{b^\theta}{r} b_1\Big)\Big\Vert_{L_{t}^{1} L^\infty}.
\end{eqnarray*}
Plugging the two last estimates into \eqref{z1}  we conclude that
\begin{align*}
 e^{-CV(t)}\Vert\widetilde{\omega}_{j}^1(t)\Vert_{B^{1}_{\infty,1}}&\lesssim \Vert\Delta_{j}\omega^1_0\Vert_{B^{1}_{\infty,1}}+\int_0^t e^{-CV(\tau)}\Vert\widetilde{\omega}_{j}^1(\tau)\Vert_{B^{1}_{\infty,1}}\Vert v(\tau)\Vert_{B^{1}_{\infty,1}}d\tau +
 2^{2j} \Big\Vert \Delta_j\Big(\frac{b^\theta}{r} b_1\Big)\Big\Vert_{L_{t}^{1} L^\infty}.
\end{align*}
Then Gronwall's inequality implies that
\begin{equation*}
\Vert\widetilde{\omega}^{1}_{j}(t)\Vert_{B^{1}_{\infty,1}}\lesssim \Big(2^{j}\Vert\Delta_{j}\omega_0\Vert_{L^\infty}+2^{2j}\Big\Vert \Delta_{j}\Big(\frac{b^\theta}{r}b_1\Big)\Big\Vert_{L_{t}^{1} L^{\infty}}\Big)e^{CV(t)}.
\end{equation*}
This can be written as 
\begin{equation*}
\Vert\Delta_{k}\widetilde{\omega}^{1}_{j}(t)\Vert_{L^{\infty}}\lesssim 2^{j-k} e^{CV(t)}\Big(\Vert\Delta_{j}\omega_0\Vert_{L^\infty}+2^{j}\Big\Vert \Delta_{j}\Big(\frac{b^\theta}{r}b_1\Big)\Big\Vert_{L_{t}^{1} L^{\infty}}\Big),
\end{equation*}
ending the proof of the proposition.
\end{proof}

\subsection{Lipschitz bound}
The following result deals with the global regularity persistence  for initial data belonging to the critical Besov space $B_{\infty,1}^1$.
\begin{proposition}\label{prop-a6} Let $v_0\in L^2\cap B^{1}_{\infty,1}$ such that ${\omega^\theta_0}/{r}\in L^{3,1}$ and $b_0\in L^2\cap B^{\frac{3}{\sigma}}_{\sigma,1}$,  $\sigma\in (3,\infty)$,  such that ${b^\theta_0}/{r}\in L^2\cap L^\infty$. Then 
for every $t \in \R_+,$ 
$$
\Vert\omega(t)\Vert_{B^{0}_{\infty,1}}+\Vert v(t)\Vert_{B^{1}_{\infty,1}}\leq \Phi_{4}(t).
$$
\end{proposition}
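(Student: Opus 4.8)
The plan is to obtain the bound from the decomposition $\omega=\sum_{j\ge-1}\widetilde{\omega}_{j}$ supplied by Proposition \ref{prop-a4}, combining its two complementary estimates (iii) and (iv) through an optimised Littlewood--Paley cut-off, exactly in the spirit of the 3D axisymmetric Euler analysis of \cite{AHK}. A first reduction is that it suffices to control $\Vert\omega\Vert_{L^\infty_tB^0_{\infty,1}}$, since by the Biot--Savart law and Bernstein's inequality
$$
\Vert v(t)\Vert_{B^1_{\infty,1}}\lesssim\Vert\Delta_{-1}v(t)\Vert_{L^\infty}+\sum_{q\ge0}2^{q}\Vert\Delta_{q}v(t)\Vert_{L^\infty}\lesssim\Vert v(t)\Vert_{L^2}+\Vert\omega(t)\Vert_{B^0_{\infty,1}}\le C_0+\Vert\omega(t)\Vert_{B^0_{\infty,1}},
$$
using Proposition \ref{prop:energy}-(i) for the low-frequency block; consequently $V(t):=\Vert v\Vert_{L^1_tB^1_{\infty,1}}\lesssim C_0t+\int_0^t\Vert\omega(\tau)\Vert_{B^0_{\infty,1}}\,d\tau$.

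Fixing an integer $N\ge1$, I would split
$$
\Vert\omega(t)\Vert_{B^0_{\infty,1}}=\sum_{k\ge-1}\Vert\Delta_{k}\omega(t)\Vert_{L^\infty}\le\sum_{|k-j|\le N}\Vert\Delta_{k}\widetilde{\omega}_{j}(t)\Vert_{L^\infty}+\sum_{|k-j|>N}\Vert\Delta_{k}\widetilde{\omega}_{j}(t)\Vert_{L^\infty}.
$$
For the near-diagonal sum I would use $\Vert\Delta_{k}\widetilde{\omega}_{j}\Vert_{L^\infty}\le\Vert\widetilde{\omega}_{j}\Vert_{L^\infty}$ and Proposition \ref{prop-a4}-(iii); since there are $O(N)$ indices $k$ for each $j$, this contributes $\lesssim N\,\Phi_{2}(t)\,A(t)$ with
$$
A(t):=\Vert\omega_{0}\Vert_{B^0_{\infty,1}}+\Big\Vert\frac{b^\theta}{r}\,b\Big\Vert_{L^1_tB^1_{\infty,1}}.
$$
For the off-diagonal sum, Proposition \ref{prop-a4}-(iv) together with $\sum_{|k-j|>N}2^{-|k-j|}\lesssim2^{-N}$ gives $\lesssim2^{-N}e^{CV(t)}A(t)$. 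Choosing $N\simeq V(t)$, so that $2^{-N}e^{CV(t)}\simeq1$, yields
$$
\Vert\omega(t)\Vert_{B^0_{\infty,1}}\lesssim\big(1+V(t)\big)\,\Phi_{2}(t)\,A(t).
$$

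It then remains to bound $A(t)$ and to close the loop. One has $\Vert\omega_{0}\Vert_{B^0_{\infty,1}}\lesssim\Vert v_{0}\Vert_{B^1_{\infty,1}}\le C_0$, while $\Vert\frac{b^\theta}{r}b\Vert_{L^1_tB^1_{\infty,1}}$ is estimated through the product laws for axisymmetric flows of the appendix (Proposition \ref{prop:app}), using $\frac{b^\theta}{r}\in L^\infty_t(L^2\cap L^\infty)$ from Proposition \ref{prop:energy}-(ii) and the smoothing bounds on $b$ of Proposition \ref{propo:bound-vort} (which, via $B^{3/\sigma+1}_{\sigma,1}\hookrightarrow B^1_{\infty,1}$, already control $b$ in $L^1_tB^1_{\infty,1}$); the slightly stronger assumption $b_0\in B^{3/\sigma}_{\sigma,1}$ enters precisely at this point, allowing the heat-smoothing estimate to be run one derivative above the scaling level so that the full product $\frac{b^\theta}{r}b$ lands in $L^1_tB^1_{\infty,1}$. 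This gives $A(t)\le\Phi_{3}(t)$. Inserting the bound on $V(t)$ from the first step, one arrives at
$$
\Vert\omega(t)\Vert_{B^0_{\infty,1}}\le C\,\Phi_{3}(t)\Big(1+\int_0^t\Vert\omega(\tau)\Vert_{B^0_{\infty,1}}\,d\tau\Big),
$$
and Gronwall's lemma together with $\exp\big(\int_0^t\Phi_{3}(\tau)\,d\tau\big)\le\Phi_{4}(t)$ gives $\Vert\omega(t)\Vert_{B^0_{\infty,1}}\le\Phi_{4}(t)$; the first step then yields $\Vert v(t)\Vert_{B^1_{\infty,1}}\le\Phi_{4}(t)$ as well.

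The main obstacle is this near/off-diagonal splitting and the calibration of $N$: estimates (iii) and (iv) of Proposition \ref{prop-a4} are genuinely complementary --- (iii) is independent of $V$ but loses a factor $N$ once summed over the blocks near the diagonal, whereas (iv) has the harmless geometric decay $2^{-|k-j|}$ but carries the exponentially large weight $e^{CV(t)}$ --- and it is only by balancing the two that one circumvents the finite-time blow-up which a naive use of either estimate alone would entail (a differential inequality of the type $V'(t)\lesssim e^{CV(t)}$). The secondary technical difficulty is the control of $\Vert\frac{b^\theta}{r}b\Vert_{L^1_tB^1_{\infty,1}}$, which forces the magnetic smoothing to be pushed slightly past its scaling-critical exponent and accounts for the extra regularity imposed on $b_0$.
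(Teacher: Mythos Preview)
Your argument is correct and follows the same near/off-diagonal splitting of the decomposition $\omega=\sum_j\widetilde\omega_j$ as the paper, with the same calibration $N\simeq V(t)$. Two points of comparison are worth recording.

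First, you handle the low-frequency block by $\Vert\Delta_{-1}v\Vert_{L^\infty}\lesssim\Vert v\Vert_{L^2}\le C_0$, whereas the paper writes $\Vert\Delta_{-1}v\Vert_{L^\infty}\le\Vert v\Vert_{L^\infty}$ and then invokes the Serfati-type bound of Proposition~\ref{propo:bound-vel} to get $\Vert v\Vert_{L^\infty}\le\Phi_3(t)$. Your route is shorter and bypasses Proposition~\ref{propo:bound-vel} altogether; the paper's route is perhaps more in keeping with the Euler template of \cite{AHK}, but yours is perfectly legitimate here since $v_0\in L^2$ is assumed.

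Second, your account of how $\big\Vert\frac{b^\theta}{r}b\big\Vert_{L^1_tB^1_{\infty,1}}$ is controlled is not quite what the paper does. You suggest pushing the magnetic smoothing one derivative above scaling, and attribute the hypothesis $b_0\in B^{3/\sigma}_{\sigma,1}$ to this step. In fact the paper uses the axisymmetric product law Proposition~\ref{prop:app}-(ii) at $p=\infty$, which gives directly $\big\Vert\frac{b^\theta}{r}b\big\Vert_{B^1_{\infty,1}}\lesssim\big\Vert\frac{b^\theta}{r}\big\Vert_{L^\infty}\Vert b\Vert_{B^1_{\infty,1}}$; combined with the embedding $B^{3/\sigma+1}_{\sigma,1}\hookrightarrow B^1_{\infty,1}$ and Proposition~\ref{propo:bound-vort} (which only needs $b_0\in B^{3/\sigma-1}_{\sigma,1}$), this yields $A(t)\le\Phi_2(t)$ rather than $\Phi_3(t)$. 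The stronger hypothesis $b_0\in B^{3/\sigma}_{\sigma,1}$ in the statement is therefore not actually used in the paper's proof; your reading of its role, while plausible, does not match what happens.
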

\begin{proof}
Let $N$ be a fixed positive integer that will be chosen later. By definition of Besov spaces and in view of Proposition \ref{prop-a4}-{(i)} one has
\begin{eqnarray}\label{s11}
\nonumber \Vert\omega(t)\Vert_{B^{0}_{\infty,1}}&\le& \sum_{k}\Vert\Delta_{k}\sum_{j}\widetilde{\omega}_{j}(t)\Vert_{L^{\infty}}\\
\nonumber &\lesssim& \sum_{k}\bigg(\sum_{\vert k-j \vert \ge N}\Vert\Delta_{k}\widetilde{\omega}_{j}(t)\Vert_{L^{\infty}}\bigg)+\sum_{k}\bigg(\sum_{\vert k-j \vert < N}\Vert\Delta_{k}\widetilde{\omega}_{j}(t)\Vert_{L^{\infty}}\bigg)\\ 
&:=& \textnormal{I}_{1}+\textnormal{I}_{2}.
\end{eqnarray}
To estimate the term $\textnormal{I}_{1}$ we use Proposition \ref{prop-a4}-{(iv)},\begin{eqnarray}\label{s2}
\nonumber \textnormal{I}_{1}&=& \sum_{k}\bigg(\sum_{\vert k-j \vert \ge N}\Vert\Delta_{k}\widetilde{\omega}_{j}(t)\Vert_{L^{\infty}}\bigg)\\ 
 &\lesssim& 2^{-N}\bigg(\Vert\omega_{0}\Vert_{B^{0}_{\infty,1}}+\Big\Vert\frac{b^\theta}{r}b\Big\Vert_{L_{t}^{1}B^{1}_{\infty,1}}\bigg)e^{C V(t)}.
\end{eqnarray}
For the second term $\textnormal{I}_{2}$ we use the continuity of the operator $\Delta_k$ in the space $L^\infty$ and Proposition \ref{prop-a4}-{(iii)}, \begin{eqnarray}\label{s3}
\nonumber \textnormal{I}_{2}&\lesssim& \sum_{j}\sum_{\vert k-j \vert < N}\Vert\widetilde{\omega}_{j}(t)\Vert_{L^{\infty}}\\
 &\lesssim& N\Phi_{2}(t)\bigg(\Vert\omega_{0}\Vert_{B^{0}_{\infty,1}}+\Big\Vert\frac{b^\theta}{r} b\Big\Vert_{L_{t}^{1}B^{1}_{\infty,1}}\bigg).
\end{eqnarray}
This, with Proposition \ref{prop:energy}-(ii),  Proposition \ref{prop:app}-(ii), the embedding $B_{{\sigma},1}^{\frac{3}{\sigma}+1}\hookrightarrow B_{\infty,1}^1$ and Proposition \ref{propo:bound-vort}, yields
\begin{equation}\label{prodbbbes}
\Big\Vert \frac{b^\theta}{r}b\Big\Vert_{L_t^1 B^{1}_{\infty,1}}\lesssim \Big\Vert \frac{b^\theta}{r}\Big\Vert_{L_t^\infty L^{\infty}}\Vert b\Vert_{L_t^1 B^{1}_{\infty,1}}\leq \Phi_2(t). 
\end{equation}
Now combining  \eqref{s11}, \eqref{s2} and  \eqref{s3} gives
\begin{eqnarray*}
\Vert\omega(t)\Vert_{B^{0}_{\infty,1}}\lesssim \big(2^{-N}e^{C V(t)}+\Phi_{2}(t)N\big)\Big(\Vert\omega_0(t)\Vert_{B^{0}_{\infty,1}}+\Phi_{2}(t)\Big).
\end{eqnarray*}
We choose $N$ such that
$$N=[C V(t)]+1,$$ we obtain
\begin{equation}\label{s4}
\Vert\omega(t)\Vert_{B^{0}_{\infty,1}}\leq (V(t)+1) \Phi_{2}(t).
\end{equation}
It remains to estimate $V(t)$. For this purpose we have by definition of Besov space, Bernstein inequality and the estimate $\Vert\Delta_{j}v \Vert_{L^\infty}\sim 2^{-j}\Vert\Delta_{j}\omega \Vert_{L^\infty},$
\begin{eqnarray*}
\Vert v(t)\Vert_{B^{1}_{\infty,1}}&=& \sum_{j \ge -1} 2^{j}\Vert\Delta_{j}v(t)\Vert_{L^\infty}\\
&\lesssim& \Vert\Delta_{-1}v(t)\Vert_{L^\infty}+\sum_{j \ge 0}\Vert\Delta_{j}\omega(t)\Vert_{L^\infty}\\
&\lesssim& \Vert v(t)\Vert_{L^\infty}+\Vert\omega(t)\Vert_{B^{0}_{\infty,1}}\\
&\leq& \Phi_{3}(t)\Big(1+\int_{0}^{t}\Vert v(\tau)\Vert_{B^{1}_{\infty,1}}d\tau\Big). 
\end{eqnarray*}
where we have used in the last inequality the estimate  \eqref{s4} and Proposition \ref{propo:bound-vel}.  Then, an application of the Gronwall inequality gives rise to
\begin{equation}\label{s5}
\Vert v(t)\Vert_{B^{1}_{\infty,1}}\lesssim \Phi_{4}(t).
\end{equation}
Plugging this estimate into \eqref{s4} gives,
$$\Vert\omega(t)\Vert_{B^{0}_{\infty,1}}\leq \Phi_{4}(t).$$
Using now the embeddings $B^{1}_{\infty,1}\hookrightarrow \textnormal{Lip}(\R^3)$ and \eqref{s5} we get
\be\label{lipes}
\Vert\nabla v(t)\Vert_{L^\infty}\lesssim \Phi_{4}(t).
\ee 
This concludes  the proof of the proposition.\qed 
\end{proof}

\vspace{0.2cm}

Now we will propagate the critical Besov $B_{p,1}^{\frac3p+1}$ regularities globally in time for $2\leq p< \infty$.
 More precisely, we prove the following proposition.
 
 \begin{proposition}\label{prop-a7} Let  $(v_0,b_0)\in ( L^2\cap B_{p,1}^{\frac3p+1})\times 
( L^2\cap B_{p,1}^{\frac3p-1})$ with  $2\le p< \infty$. 
Assume, in addition that   
${\omega_0^\theta}/{r}\in L^{3,1}$ and ${b_0^\theta}/{r}\in L^2 \cap L^{\infty}$.  
Then 
for every $t \in \R_+,$ 
$$
\Vert v\Vert_{{L}^\infty_t  B_{p,1}^{1+\frac3p}}+\Vert b\Vert_{{L}^\infty_t B_{p,1}^{\frac3p-1}}+\Vert b\Vert_{{L}^1_t B_{{{p}},1}^{\frac{3}{{{p}}}+1}}\leq \Phi_{6}(t).$$
\end{proposition}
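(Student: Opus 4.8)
The plan is to propagate the critical Besov norms of $v$ and $b$ \emph{jointly} by transport(-diffusion) estimates, treating as black boxes the global-in-time controls already established: the Lipschitz bound $\|v\|_{L^1_t B^1_{\infty,1}}\le\Phi_4(t)$ (hence $\|\nabla v\|_{L^1_t L^\infty}\le\Phi_4(t)$) from Proposition \ref{prop-a6}, together with $\|\omega\|_{L^\infty_t L^\infty}$ and $\|b\|_{L^2_t B^0_{\infty,1}}\le\Phi_2(t)$, $\|v^r/r\|_{L^\infty_t L^\infty}\le\Phi_1(t)$, $\|\Gamma\|_{L^\infty_t L^\infty}\le\|\Gamma_0\|_{L^\infty}$ and $\|v\|_{L^\infty_t L^\infty}\le\Phi_3(t)$ coming from Propositions \ref{prop:energy}, \ref{prop:gamma}, \ref{propo:bound-vort} and \ref{propo:bound-vel}. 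Everything then reduces to a bootstrap, closed by Gronwall, on $X(t):=\|\omega\|_{\widetilde L^\infty_t B^{3/p}_{p,1}}+\|b\|_{\widetilde L^\infty_t B^{3/p-1}_{p,1}}+\|b\|_{L^1_t B^{3/p+1}_{p,1}}$; the bound on $v$ is recovered afterwards from Biot--Savart, $\|v\|_{\widetilde L^\infty_t B^{3/p+1}_{p,1}}\lesssim\|v\|_{L^\infty_t L^2}+\|\omega\|_{\widetilde L^\infty_t B^{3/p}_{p,1}}$ (using $\|\Delta_{-1}v\|_{L^p}\lesssim\|v\|_{L^2}\le C_0$ for the zeroth block).

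\emph{Magnetic field.} Localising \eqref{eq:onb} in frequency and running the heat-smoothing scheme behind \eqref{decoup-b0} (generalised Bernstein inequality plus Young's convolution inequality), then multiplying by $2^{q(3/p-1)}$ and summing, one obtains
\begin{align*}
\|b\|_{\widetilde L^\infty_t B^{3/p-1}_{p,1}}+\|b\|_{L^1_t B^{3/p+1}_{p,1}}&\lesssim\|b_0\|_{B^{3/p-1}_{p,1}}+\|\Delta_{-1}b\|_{L^1_t L^p}\\
&\quad+\sum_{q\ge-1}2^{q(3/p-1)}\|[\Delta_q,v\cdot\nabla]b\|_{L^1_t L^p}+\Big\|\tfrac{v^r}{r}\,b\Big\|_{L^1_t B^{3/p-1}_{p,1}}.
\end{align*}
Since $3/p-1\in(-1,1)$ for $2\le p<\infty$, the commutator sum is $\lesssim\int_0^t\|\nabla v\|_{L^\infty}\|b\|_{B^{3/p-1}_{p,1}}\,d\tau$ by Lemma \ref{m1}; the low-frequency term is absorbed via $\|\Delta_{-1}b\|_{L^p}\lesssim\|b\|_{L^2}\le C_0$; and the forcing term $\|\tfrac{v^r}{r}b\|_{B^{3/p-1}_{p,1}}$ is estimated through the axisymmetric product laws of Proposition \ref{prop:app} (a bare ``multiplication by an $L^\infty$ function'' does not control this borderline/negative index), producing $\|v^r/r\|_{L^\infty}\le\Phi_1(t)$ times $\|b\|_{B^{3/p-1}_{p,1}}$ plus a cross-term coupling $b$ to $v$ whose coefficient is arranged to be integrable in time. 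In particular no $L^p$-norm of $b\cdot\nabla b$ ever enters here.

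\emph{Velocity (via the vorticity).} I estimate $\omega$ through \eqref{eq:vorticity}: localising, taking the $L^p$-estimate of $\omega_q$ (the transport term drops by incompressibility) and summing $2^{q\cdot 3/p}$ gives
$$\|\omega\|_{\widetilde L^\infty_t B^{3/p}_{p,1}}\lesssim\|\omega_0\|_{B^{3/p}_{p,1}}+\int_0^t\Big\|\tfrac{v^r}{r}\,\omega(\tau)\Big\|_{B^{3/p}_{p,1}}d\tau+\big\|\partial_z(\Gamma b)\big\|_{L^1_t B^{3/p}_{p,1}}+\sum_{q\ge-1}2^{q\cdot 3/p}\|[\Delta_q,v\cdot\nabla]\omega\|_{L^1_t L^p}.$$
Because $3/p>0$, the stretching term obeys $\|\tfrac{v^r}{r}\omega\|_{B^{3/p}_{p,1}}\lesssim(\|\tfrac{v^r}{r}\|_{L^\infty}+\|\omega\|_{L^\infty})(\|\omega\|_{B^{3/p}_{p,1}}+C_0)$, after bounding $\|\tfrac{v^r}{r}\|_{B^{3/p}_{p,1}}\lesssim\|\nabla v\|_{B^{3/p}_{p,1}}\approx\|\omega\|_{B^{3/p}_{p,1}}$ by a Taylor expansion in the radial variable; the magnetic source is $\|\partial_z(\Gamma b)\|_{B^{3/p}_{p,1}}\lesssim\|\Gamma b\|_{B^{3/p+1}_{p,1}}\lesssim\|\Gamma\|_{L^\infty}\|b\|_{B^{3/p+1}_{p,1}}$ by Proposition \ref{prop:app}, which is fed by the $L^1_t B^{3/p+1}_{p,1}$-norm of $b$ from the previous step; and Bony's decomposition gives
$$\sum_{q\ge-1}2^{q\cdot 3/p}\|[\Delta_q,v\cdot\nabla]\omega\|_{L^p}\lesssim\|\nabla v\|_{L^\infty}\|\omega\|_{B^{3/p}_{p,1}}+\|\omega\|_{L^\infty}\|v\|_{B^{3/p+1}_{p,1}},$$
the crucial point being that the ``dangerous'' factors are $\|\nabla v\|_{L^\infty}$ and $\|\omega\|_{L^\infty}$ — both globally controlled — and \emph{not} $\|\nabla\omega\|_{L^\infty}$; combined with $\|v\|_{B^{3/p+1}_{p,1}}\lesssim C_0+\|\omega\|_{B^{3/p}_{p,1}}$ this too is $\lesssim(\|\nabla v\|_{L^\infty}+\|\omega\|_{L^\infty})(\|\omega\|_{B^{3/p}_{p,1}}+C_0)$. (At $q=-1$ one uses $\|v\|_{L^\infty}\le\Phi_3(t)$ in place of $\|\nabla v\|_{L^\infty}$.)

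\emph{Closing, and the main obstacle.} Adding the two estimates and applying Gronwall's inequality with $\int_0^t\|\nabla v\|_{L^\infty}\le\Phi_4(t)$, $\|\omega\|_{L^\infty_tL^\infty}\le\Phi_2(t)$, $\|v^r/r\|_{L^\infty_tL^\infty}\le\Phi_1(t)$ and $\|\Gamma\|_{L^\infty_tL^\infty}\le\|\Gamma_0\|_{L^\infty}$ gives $X(t)\le\Phi_6(t)$, and Biot--Savart then yields the stated bound for $v$. The step I expect to be genuinely delicate is the control of the two critical-regularity products $\|\Gamma b\|_{B^{3/p+1}_{p,1}}$ and $\|\tfrac{v^r}{r}b\|_{B^{3/p-1}_{p,1}}$: these sit at borderline (resp. negative) Besov exponents where the crude $L^\infty$-multiplier bound fails, so one must exploit the special form $b=b^\theta e_\theta$ and the structure of $v^r/r$ — exactly what Proposition \ref{prop:app} furnishes — and, moreover, must do so in such a way that the cross-terms so generated couple $\omega$ and $b$ only through coefficients that are integrable in time, so that the joint Gronwall argument closes globally rather than merely locally.
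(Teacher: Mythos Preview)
Your overall architecture is correct and close to the paper's, but there is one concrete gap. You invoke Proposition~\ref{prop:app} to estimate the forcing term $\big\|\tfrac{v^r}{r}\,b\big\|_{B^{3/p-1}_{p,1}}$ in the magnetic equation. That proposition, however, treats only the specific product $\tfrac{b^\theta}{r}\,b$ and only at the indices $H^s$ with $s>0$ and $B^{3/p+1}_{p,1}$; it says nothing about $\tfrac{v^r}{r}\,b$ at the (possibly nonpositive) index $3/p-1$. The structure you would need to mimic the appendix argument is different ($v^r/r=v^1/x_1=v^2/x_2$ rather than $b^\theta/r=-b^1/x_2$), and the negative-index case for $p>3$ does not come for free. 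The paper sidesteps this entirely by keeping the forcing in the form $b\cdot\nabla v$ and using \emph{standard} product laws, with a necessary case split: for $p>3$ the whole $B^{3/p-1}_{p,1}$ smoothing estimate is already contained in Proposition~\ref{propo:bound-vort}; for $2\le p<3$ the space $B^{3/p-1}_{p,1}\cap L^\infty$ is an algebra (positive index), yielding $\|b\cdot\nabla v\|_{B^{3/p-1}_{p,1}}\lesssim\|\nabla v\|_{L^\infty}\|b\|_{B^{3/p-1}_{p,1}}+\|b\|_{L^\infty}\|v\|_{B^{3/p}_{p,1}}$; and for the endpoint $p=3$ a separate product law $\|b\cdot\nabla v\|_{B^0_{3,1}}\lesssim\|v\|_{B^1_{\infty,1}}\|b\|_{B^0_{3,1}}$ from~\cite{HKB} is used. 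The cross-term $\|b\|_{L^\infty}\|v\|_{B^{3/p+1}_{p,1}}$ is exactly the coupling you anticipated, with $\|b\|_{L^1_tL^\infty}$ integrable by Proposition~\ref{prop:energy}-(iii), so the joint Gronwall closes.

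A second, smaller point: for the vorticity the paper avoids your estimate of $\|\tfrac{v^r}{r}\|_{B^{3/p}_{p,1}}$ altogether. It applies Proposition~\ref{prop:trans-besov} directly to \eqref{eq:vorticity} (so the commutator is absorbed into $e^{CU(t)}$) and then uses the inequality $\|\omega\cdot\nabla v\|_{B^{3/p}_{p,1}}\lesssim\|\nabla v\|_{L^\infty}\|\omega\|_{B^{3/p}_{p,1}}$ from~\cite{AHK}, which exploits $\omega=\textnormal{curl}\,v$ through Bony's decomposition rather than the pointwise identity $\omega\cdot\nabla v=\tfrac{v^r}{r}\omega$. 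Your route via a Taylor expansion of $v^r/r$ and the algebra property of $B^{3/p}_{p,1}\cap L^\infty$ is plausible but would itself require the anisotropic dilation bound in $B^{3/p}_{p,1}$, which the paper has only recorded for $B^0_{\infty,1}$.
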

\begin{proof}
We have by definition
\begin{align}\label{vbp0} 
\nonumber\Vert v(t)\Vert_{B^{1+\frac3p}_{p,1}}&= \sum_{j \ge -1} 2^{j(1+\frac3p)}\Vert\Delta_{j}v(t)\Vert_{L^p}\\
\nonumber&\lesssim \Vert\Delta_{-1}v(t)\Vert_{L^p}+\sum_{j \ge 0}2^\frac3p\Vert\Delta_{j}\omega(t)\Vert_{L^p}\\
&\lesssim \Vert v(t)\Vert_{L^p}+\Vert\omega(t)\Vert_{B^{\frac3p}_{p,1}}.
\end{align}
We shall first estimate   $\Vert\omega(t)\Vert_{B^{\frac3p}_{p,1}}$.  For this purpose we apply Proposition \ref{prop:trans-besov} to the vorticity equation \eqref{eq:vorticity}, we obtain
\begin{equation*}
 e^{-C U(t)}\Vert \omega(t) \Vert_{B_{p,1}^\frac3p}\lesssim \Vert \omega_0 \Vert_{B_{p,1}^\frac3p}+\int_0^t e^{-C U(\tau)}\Vert\omega\cdot\nabla v(\tau) \Vert_{B_{p,1}^\frac3p}d\tau+\int_0^t e^{-C U(\tau)}\Vert \textnormal{curl}\big(b\cdot\nabla b)(\tau) \Vert_{B_{p,1}^\frac3p} d\tau,
\end{equation*}
for all $p\in [1,\infty)$ where $U(t):=\Vert\nabla v\Vert_{L_t^1L^\infty}$. Since $\omega=\textnormal{curl}\,v$ then using Bony's decomposition we can show that (see page 36 from \cite{AHK})
$$
\Vert \omega\cdot\nabla v(\tau) \Vert_{B_{p,1}^\frac3p}\lesssim \Vert \omega\Vert_{B_{p,1}^\frac3p}\Vert\nabla  v(\tau) \Vert_{L^\infty}.
$$
Combining the two last estimates and using Gronwall's inequality we find
\begin{equation*}
\Vert \omega(t) \Vert_{B_{p,1}^\frac3p}\lesssim \Big(\Vert \omega_0 \Vert_{B_{p,1}^\frac3p}+\int_0^t e^{-C U(\tau)}\Vert \textnormal{curl}\big(b\cdot\nabla b)(\tau) \Vert_{B_{p,1}^\frac3p} d\tau \Big)e^{C U(t)}.
\end{equation*}
Since $\textnormal{curl} (b\cdot\nabla b)=-\partial_z \Big(\frac{ b^\theta}{r}b\Big)$ then using Proposition \eqref{prop:app}-(ii) and Proposition \eqref{prop:energy}-(ii), we readily  obtain
$$
\Vert \textnormal{curl}\big(b\cdot\nabla b)\Vert_{B_{p,1}^\frac3p}\lesssim \Big\Vert \frac{b^\theta}{r}   b \Big\Vert_{B_{p,1}^{\frac3p+1}}\lesssim \Big\Vert \frac{b^\theta}{r}   \Big\Vert_{L^\infty}\Vert b \Vert_{B_{p,1}^{\frac3p+1}}\lesssim\Big\Vert \frac{b^\theta_0}{r}   \Big\Vert_{L^\infty}\Vert b \Vert_{B_{p,1}^{\frac3p+1}}.
$$ 
Combining the two last estimate, we find , 
\begin{equation}\label{omegabp}
\Vert \omega(t) \Vert_{B_{p,1}^\frac3p}\lesssim \Big(\Vert \omega_0 \Vert_{B_{p,1}^\frac3p}+\Big\Vert \frac{b^\theta_0}{r}   \Big\Vert_{ L^\infty} \Vert b \Vert_{L^1_t B_{p,1}^{\frac3p+1}}\Big)
e^{C U(t)}.
\end{equation}
Now we will estimate $\Vert v(t)\Vert_{L^p}$.
 Since the Riesz transforms act continuously on $L^p$, for all $1<p<\infty$, then  from \eqref{veleq} 
we get
\begin{align*}
\Vert v(t)\Vert_{L^p}&\lesssim \Vert v_{0}\Vert_{L^{p}}+\int_0^t\Vert b\cdot \nabla b (\tau)\Vert_{L^{p}}d\tau +\int_0^t\Vert v (\tau)\Vert_{L^{p}}\Vert \nabla v (\tau)\Vert_{L^{\infty}}d\tau.
\end{align*}
Using   Gronwall's inequality, the identity  \eqref{bnablab}, the estimate \eqref{normgammalp} and  the embedding $B_{p,1}^{\frac3p+1}\hookrightarrow L^p $ we obtain, for all $p\in (1,\infty)$, 
\begin{align}\label{vlp2}
\Vert v(t)\Vert_{L^p}
&\le \Big(\Vert v_{0}\Vert_{L^{p}}+\int_0^t\Vert b\cdot \nabla b (\tau)\Vert_{L^{p}}d\tau \Big)e^{C\Vert\nabla v\Vert_{L^1_tL^{\infty}}}
\nonumber\\
&\lesssim \Big(\Vert v_{0}\Vert_{L^{p}}+\Vert {b^\theta }/{r}\Vert_{L^\infty_t L^\infty}\Vert b \Vert_{L^1_tL^{p}} \Big)e^{C\Vert\nabla v\Vert_{L^1_tL^{\infty}}}
\nonumber\\
&\lesssim \Big(\Vert v_{0}\Vert_{L^{p}}+\Vert {b_0^\theta }/{r}\Vert_{ L^\infty}\Vert b \Vert_{L^1_t B_{p,1}^{\frac3p+1}} \Big)e^{C\Vert\nabla v\Vert_{L^1_tL^{\infty}}}.
\end{align}
Inserting  \eqref{omegabp}  and \eqref{vlp2} into \eqref{vbp0} gives
 \begin{align}\label{vbp10}
\Vert v(t)\Vert_{B^{1+\frac3p}_{p,1}}
&\lesssim \Big(\Vert v_{0}\Vert_{L^{p}}+\Vert \omega_0 \Vert_{B_{p,1}^\frac3p}+\Big\Vert \frac{b^\theta_0}{r}   \Big\Vert_{ L^\infty} \Vert b \Vert_{L^1_t B_{p,1}^{\frac3p+1}} \Big)e^{C U(t)}.
\end{align}
Now, to estimate the term $\Vert b \Vert_{L^1_t B_{p,1}^{\frac3p+1}}$, we distinguish two cases: 
\paragraph{Case $p>3$.} 
Using  the embedding $ B_{\infty,1}^{1}\hookrightarrow \textnormal{Lip}(\R^3)$,  Proposition \ref{propo:bound-vort},   and  Proposition \ref{prop-a6},  we readily  conclude  that
\begin{align*}
\Vert b\Vert_{L_t^\infty L^{p}} + \Vert b\Vert_{\widetilde{L}^1_t B_{{p},1}^{\frac{3}{p}+1}} +\Vert v(t)\Vert_{B^{1+\frac3p}_{p,1}}\le \Phi_5(t). 
\end{align*}
 This achieves the proof for  the case $p>3$.

 \paragraph{Case $2\le p\le 3$.}
 In view of \eqref{decoup-b0} we have
\begin{align}\label{est:bfin-sec}
\nonumber\Vert b\Vert_{\widetilde{L}^\infty_t B_{p,1}^{\frac3p-1}}+\Vert b\Vert_{\widetilde{L}^1_t B_{p,1}^{\frac3p+1}} &\lesssim \Vert\Delta_{-1} b\Vert_{L_t^1 L^p}+\Vert b_0\Vert_{B_{p,1}^{\frac3p-1}}+\Vert b\cdot\nabla v\Vert_{\widetilde{L}^1_t B_{p,1}^{\frac3p-1}}\\ &\quad +\sum_{q\geq 0}2^{q(\frac{3}{p}-1)}\big\Vert[\Delta_q,v\cdot\nabla]b\big\Vert_{L^1_t L^{p}}.
\end{align}
From Lemma \ref{m1}, since $-1<\frac{3}{p}-1<\frac12$ then one has
\begin{align}\label{comfin00}
\sum_{q\geq -1}2^{q(\frac{3}{p}-1)}\big\Vert[\Delta_q,v\cdot\nabla]b\big\Vert_{L^1_t L^{p}}
&\lesssim \int_0^t\Vert \nabla v (\tau)\Vert_{ L^\infty}\Vert b(\tau)\Vert_{ B_{p,1}^{\frac{3}{p}-1}}d\tau.
\end{align}
 In order to estimate the term $\Vert b\cdot\nabla v\Vert_{\widetilde{L}^1_t B_{p,1}^{\frac3p-1}}$, in \eqref{est:bfin-sec}, we shall use the following product law
\begin{align*}
\Vert b\cdot\nabla v\Vert_{ B_{p,1}^{\frac3p-1}} &\lesssim 
\left\{ 
\begin{array}{ll} 
\Vert\nabla v\big\Vert_{L^\infty}\Vert b\Vert_{ B_{p,1}^{\frac3p-1}}+\Vert b\Vert_{L^\infty}\Vert   v\Vert_{ B_{p,1}^{\frac3p}} &\quad\hbox{if}\quad p<  3,\\
\Vert   v\Vert_{ B_{\infty,1}^{1}}\Vert b\Vert_{ B_{p,1}^{\frac3p-1}} &\quad\hbox{if}\quad p=3,
\end{array} \right.
\end{align*}
where the first case follows from the fact that $B_{p,1}^{\frac3p-1}\cap L^\infty$ is an algebra for every $p<3$ and the second one   is proved in \cite{HKB}.
Thus, using  the embeddings $B_{\infty,1}^{1} \hookrightarrow \textnormal{Lip}(\R^3)$ and $B_{p,1}^{\frac3p+1} \hookrightarrow   B_{p,1}^{\frac3p}$ we conclude that
\be\label{prodfin-sec}
\Vert b\cdot\nabla v\Vert_{ B_{p,1}^{\frac3p-1}} \lesssim \Vert   v\Vert_{ B_{\infty,1}^{1}}\Vert b\Vert_{ B_{p,1}^{\frac3p-1}}+\Vert b\Vert_{L^\infty}\Vert   v\Vert_{ B_{p,1}^{\frac3p+1}} .
\ee
Inserting \eqref{comfin00} and   \eqref{prodfin-sec} into \eqref{est:bfin-sec} and using the embedding $B_{p,1}^{\frac3p-1}\hookrightarrow L^p$ 
we obtain
\begin{align*}
\Vert b\Vert_{\widetilde{L}^\infty_t B_{p,1}^{\frac3p-1}}+\Vert b\Vert_{\widetilde{L}^1_t B_{p,1}^{\frac3p+1}} &\lesssim \Vert b_0\Vert_{B_{p,1}^{\frac3p-1}}+\int_0^t\Vert b(\tau)\Vert_{ L^\infty}\Vert   v(\tau)\Vert_{B_{p,1}^{\frac3p+1}}d\tau+\int_0^t\big(1+\Vert   v(\tau)\Vert_{  B_{\infty,1}^{1}}\big)\Vert b(\tau)\Vert_{ B_{p,1}^{\frac3p-1}}d\tau.
\end{align*}
Then, Gronwall's inequality 
gives
\begin{align}\label{bbp11}
\Vert b\Vert_{\widetilde{L}^\infty_t B_{p,1}^{\frac3p-1}}+\Vert b\Vert_{\widetilde{L}^1_t B_{p,1}^{\frac3p+1}}&\lesssim \Big( \Vert b_0\Vert_{B_{p,1}^{\frac3p-1}}+\int_0^t\Vert b(\tau)\Vert_{ L^\infty}\Vert   v(\tau)\Vert_{B_{p,1}^{\frac3p+1}}d\tau\Big)e^{C\Vert v\Vert_{L^t_1 B_{\infty,1}^{1}}}.
\end{align}
Plugging \eqref{bbp11} into \eqref{vbp10} we deduce that
\begin{align*}
\notag\Vert v(t)\Vert_{B^{1+\frac3p}_{p,1}}
\lesssim 
\Big(&\Vert v_{0}\Vert_{L^{p}}+\Vert \omega_0 \Vert_{B_{p,1}^\frac3p}+\Big\Vert \frac{b^\theta_0}{r}   \Big\Vert_{ L^\infty} \Vert b_0\Vert_{B_{p,1}^{\frac3p-1}}+\Big\Vert \frac{b^\theta_0}{r}   \Big\Vert_{ L^\infty} \int_0^t\Vert b(\tau)\Vert_{ L^\infty}\Vert   v(\tau)\Vert_{B_{p,1}^{\frac3p+1}}d\tau\Big)e^{C\Vert v\Vert_{L^t_1 B_{\infty,1}^{1}}}.
\end{align*}
Applying again  Granwall's inequality we get
\begin{align*}
\notag\Vert v(t)\Vert_{B^{1+\frac3p}_{p,1}}
\leq C_0e^{C\Vert v\Vert_{L^t_1 B_{\infty,1}^{1}}}\textnormal{exp}\Big(C_0 \Vert b\Vert_{L^t_1 L^\infty}  e^{C\Vert v\Vert_{L^t_1 B_{\infty,1}^{1}}}\Vert b\Vert_{L^t_1 L^\infty}\Big).
\end{align*}
Finally, using the embedding $B^{\frac3p-1}_{p,1} \hookrightarrow B^{\frac{3}{\sigma}-1}_{\sigma,1}$, for some $\sigma>3$,  Proposition \ref{prop:energy}-(iii) and  Proposition \ref{prop-a6} we conclude that
\begin{align*}
\notag\Vert v(t)\Vert_{B^{1+\frac3p}_{p,1}}
\le \Phi_6(t). 
\end{align*}
which give in turn, by \eqref{bbp11},
\begin{align*}
\Vert b\Vert_{\widetilde{L}^\infty_t B_{p,1}^{\frac3p-1}}+\Vert b\Vert_{\widetilde{L}^1_t B_{p,1}^{\frac3p+1}}&\le\Phi_6(t).
\end{align*}
 This achieves the proof of the Proposition.\qed

\end{proof}
 
\appendix

\section{Appendix}
Our task now is to prove the following product laws.
\begin{proposition}\label{prop:app}
Let $b$ be a smooth axisymmetric vector field with a trivial radial component;  $b^r=0$. Then the following estimate occurs
\begin{enumerate}[label=\rm(\roman*)]
\item For all $s>0$ and $t>0$ we have
$$\Big\Vert \frac{ b^\theta}{r}b\Big\Vert_{\widetilde{L}^1 H^{s}}\leq C\Big\Vert \frac{ b^\theta}{r}\Big\Vert_{L_t^\infty L^\infty}\Vert b\Vert_{\widetilde{L}^1 H^{s}}.$$
\item For all $p\in [1,\infty]$ we have
 \begin{equation*}
\Big\Vert \frac{b^\theta}{r}b\Big\Vert_{B^{\frac3p+1}_{p,1}}\lesssim \Big\Vert \frac{b^\theta}{r} \Big\Vert_{L^\infty}\Vert b \Vert_{B^{\frac3p+1}_{p,1}}. 
\end{equation*}
\end{enumerate}
\end{proposition}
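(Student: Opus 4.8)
The plan is to reduce to a scalar product estimate and then combine Bony's decomposition with the algebraic form of an axisymmetric field. Set $\Gamma:=b^\theta/r$. Since $b=b^\theta e_\theta$ with $b^r=b^z=0$, Cartesian coordinates give $b=(b^1,b^2,0)$ with $b^1=-x_2\Gamma$ and $b^2=x_1\Gamma$, so
$$\frac{b^\theta}{r}\,b=\Gamma b=(\Gamma b^1,\Gamma b^2,0),$$
and it suffices to bound a scalar product $\Gamma b^j$ in $H^s$ (for (i)) and in $B^{\frac3p+1}_{p,1}$ (for (ii)). I would write $\Gamma b^j=T_\Gamma b^j+T_{b^j}\Gamma+R(\Gamma,b^j)$ and treat the three pieces separately.

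The two pieces in which $\Gamma$ enters only through blocks $\Delta_q\Gamma$ are routine: $\|T_\Gamma b^j\|_{B^s_{p,r}}\lesssim\|\Gamma\|_{L^\infty}\|b^j\|_{B^s_{p,r}}$ by the standard paraproduct bound, and for the remainder one uses $\|\Delta_q\Gamma\|_{L^\infty}\lesssim\|\Gamma\|_{L^\infty}$ and sums the geometric series in the frequency gap, which converges because $s>0$ (resp. $\frac3p+1>0$, where one also uses $B^{3/p}_{p,1}\hookrightarrow L^\infty$), yielding $\|R(\Gamma,b^j)\|_{B^s_{p,r}}\lesssim\|\Gamma\|_{L^\infty}\|b^j\|_{B^s_{p,r}}$.

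The delicate term is $T_{b^j}\Gamma=\sum_q S_{q-1}b^j\,\Delta_q\Gamma$, where $\Gamma$ now carries the high frequencies but only $\|\Gamma\|_{L^\infty}$ is permitted; here the axisymmetric structure is essential. Working with $b^1=-x_2\Gamma$ and commuting the multiplier $x_2$ through the Littlewood--Paley projectors (each commutator producing a factor $2^{-q}$ times an operator that is still frequency-localized at $\lesssim 2^q$ and $L^p$-bounded uniformly in $q$), one substitutes $S_{q-1}b^1=-x_2 S_{q-1}\Gamma+2^{-q}\mathcal R_q'\Gamma$ and then $x_2\,\Delta_q\Gamma=\Delta_q(x_2\Gamma)+2^{-q}\mathcal R_q\Gamma=-\Delta_q b^1+2^{-q}\mathcal R_q\Gamma$. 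This rewrites $(T_b\Gamma)^1$ as $T_\Gamma b^1$ — already controlled — plus a sum of error terms, each frequency-localized at $2^q$ and carrying the prefactor $2^{-q}$ in front of objects bounded in $L^\infty$ by $\|\Gamma\|_{L^\infty}$; taking the $\ell^r$ norm in $q$, the $2^{-q}$ gain lowers the regularity index by one, so these errors are bounded by $\|\Gamma\|_{L^\infty}\|\Gamma\|_{B^{s-1}_{p,r}}$ (for the second component one uses $b^2=x_1\Gamma$ and the dilation in $x_1$ symmetrically). It then remains to show $\|\Gamma\|_{B^{s-1}_{p,r}}\lesssim\|b\|_{B^s_{p,r}}$: by Proposition~\ref{prop:geo} the component $b^1$ vanishes on $\{x_2=0\}$, so Taylor's formula gives $\Gamma(x)=-\int_0^1(\partial_2 b^1)(x_1,\tau x_2,x_3)\,d\tau$, and an anisotropic dilation estimate in Besov spaces in the spirit of \eqref{prop-a5} (carrying, for $p<\infty$, an extra $\tau^{-1/p}$ factor) bounds $\|\Gamma\|_{B^{s-1}_{p,r}}$ by $\int_0^1\|(\partial_2 b^1)_\tau\|_{B^{s-1}_{p,r}}\,d\tau\lesssim\|b\|_{B^s_{p,r}}$, the $\tau$-integral converging in the range at hand. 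Summing the three pieces — and observing that in (i) the time integration passes through the Chemin--Lerner norm blockwise without change — completes the argument.

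The main obstacle is precisely the term $T_{b^j}\Gamma$: one has to organize the commutator expansion so that at every frequency the high-frequency factor is either $\Delta_q b$, controlled in the target space, or $2^{-q}$ times a function controlled merely in $L^\infty$, and never a genuine Besov or Sobolev norm of $\Gamma$; and the auxiliary inequality $\|\Gamma\|_{B^{s-1}_{p,r}}\lesssim\|b\|_{B^s_{p,r}}$ — a Hardy-type estimate tailored to the axisymmetric geometry — must be proved with a constant that does not see $\|\Gamma\|_{L^\infty}$, which is exactly where the vanishing of $b$ on the symmetry axes (Proposition~\ref{prop:geo}) enters. The low block $\Delta_{-1}$ is checked separately, the polynomial weights $x_j$ being harmless there. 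An alternative, closer to the method of Proposition~\ref{prop-a4}, is to insert the Taylor representation of $\Gamma$ directly into the problematic pieces and sum with a logarithmic loss, but retaining the coefficient exactly $\|\Gamma\|_{L^\infty}$ (rather than, say, $\|\partial_2 b^1\|_{L^\infty}$) seems to force the decomposition above.
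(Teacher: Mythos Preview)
Your proposal is correct and follows essentially the same route as the paper's proof: Bony's decomposition, with the two easy pieces $T_\Gamma b$ and $R(\Gamma,b)$ handled by standard paraproduct/remainder bounds, and the dangerous piece $T_b\Gamma$ treated by commuting the polynomial weight through the Littlewood--Paley blocks (the paper commutes $1/x_2$ while you commute $x_2$, but the resulting three-term decomposition --- main term $S_{q-1}\Gamma\,\Delta_q b^1$ plus two error terms each carrying a $2^{-q}$ gain --- is the same), followed by the Hardy-type bound $\|\Gamma\|_{B^{s-1}}\lesssim\|b\|_{B^s}$ via Taylor's formula and anisotropic dilation.
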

\begin{proof}
Using  Bony’s decomposition \eqref{j}, we write 
\begin{eqnarray}\label{hz2}
\frac{b^\theta}{r} b&=&T_{\frac{b^\theta}{r}}b+ T_{b}\frac{b^\theta}{r}+  \mathcal{R}\Big(\frac{b^\theta}{r},b\Big).
\end{eqnarray}
By definition of the paraproduct, we have 
\be\label{gt}
T_{\frac{b^\theta}{r}}b=\sum_{k} S_{k-1}\Big(\frac{b^\theta}{r}\Big)\Delta_{k} b.
\ee
Then, by  \eqref{orth} for all $k\in\mathbb{N}\cup\{-1\}$ one has
$$\Delta_{q}\big(T_{\frac{b^\theta}{r}}b\big)=\sum_{\vert k-q \vert \le 4}\Delta_{q}\Big(S_{k-1}\frac{b^\theta}{r}\Delta_{k}b\Big).
$$
Using H\"older  inequality, we get
\begin{equation}\label{tgood}
\big\Vert \Delta_{q}\big(T_{\frac{b^\theta}{r}}b\big)\big\Vert_{{L}^1_t L^2}
\le  \Big\Vert \frac{b^\theta}{r}\Big\Vert_{{L}^\infty_t L^\infty}\sum_{\vert k-q \vert \le 4}\Vert\Delta_{q} b\Vert_{{L}^1_t L^2}.
\end{equation}
It follows that
\begin{eqnarray}\label{hs1}
\big\Vert T_{\frac{b^\theta}{r}}b\big\Vert_{\widetilde{L}^1_t H^{s}}=\Big\| 2^{qs}\big\Vert \Delta_{q}\big(T_{\frac{b^\theta}{r}}b\big)\big\Vert_{{L}^1_t L^2}\Big\|_{\ell^2}
&\lesssim& \Big\Vert \frac{b^\theta}{r}\Big\Vert_{{L}^\infty_tL^\infty} \Vert b\Vert_{\widetilde{L}^1_t H^{s}}.
\end{eqnarray}
For the remainder term we have by definition and by \eqref{orth},
\begin{eqnarray}\label{gr}
\Delta_{j}\mathcal{R}\Big(b,\frac{b^\theta}{r}\Big)
&=& \sum_{k\geq j-4}\Delta_{j}\Big(\Delta_{k}b\,\widetilde{\Delta}_{k}\Big(\frac{b^\theta}{r}\Big)\Big).
\end{eqnarray}
  It follows that
\begin{align*}
\Big\Vert\Delta_{j}\mathcal{R}\big(b,\frac{b^\theta}{r}\big)\Big\Vert_{{L}^1_tL^2}&\lesssim 
\sum_{k\ge j-4} \Vert\Delta_{k}b\Vert_{{L}^1_tL^2} \Big\Vert\widetilde{\Delta}_{k}\Big(\frac{b^\theta}{r}\Big)\Big\Vert_{{L}^\infty_tL^{\infty}},
\end{align*}
where we have used the continuity of the operator $\Delta_{j}$ in $L^2.$ Therefore
\begin{eqnarray}\label{hz4}
\nonumber \Big\Vert\Big(2^{js}\Big\Vert\Delta_{j}\mathcal{R}\big(b,\frac{b^\theta}{r}\big)\Big\Vert_{L^2}\Big)_k\Big\|_{\ell^2}
\nonumber&\lesssim&  \Big\Vert\frac{b^\theta}{r}\Big\Vert_{{L}^\infty_t L^{\infty}}\Big(\sum_{j\geq -1}\Big(\sum_{k\ge j-4}2^{(j-k)s}2^{ks}\Vert\Delta_{k}b\Vert_{{L}^1_tL^2}\Big)^2\Big)^\frac12\\
 &\lesssim& \Big\Vert\frac{b^\theta}{r}\Big\Vert_{{L}^\infty_t L^{\infty}}\Vert b \Vert_{\widetilde{L}^1_t H^s}.
\end{eqnarray}
To estimate the term $\Big\Vert T_{b}\frac{b^\theta}{r}\Big\Vert_{H^s},$ we use the axisymmetric structure of the vector field $b$. We write 
$$\Delta_{q}\Big(T_{b}\frac{b^\theta}{r}\Big)=\sum_{\vert k-q \vert \le 4}\Delta_{q}\Big(S_{k-1}b\,\Delta_{k}\Big(\frac{b^\theta}{r}\Big)\Big).
$$
Therefore, we have
\begin{eqnarray}\label{hs1}
\Big\Vert T_{b}\frac{b^\theta}{r}\Big\Vert_{\widetilde{L}^1_t H^{s}}\lesssim\Big\| \Big(2^{ks}\Big\Vert S_{k-1}b\,\Delta_{k}\Big(\frac{b^\theta}{r}\Big)\Big\Vert_{{L}^1_tL^2}\Big)_k\Big\|_{\ell^2}
\end{eqnarray}
Since $b^r=0$ then we can easily check that
$$
\frac{b^\theta}{r}=-\frac{b^1}{x_2}=\frac{b^2}{x_1}.
$$
The term $S_{k-1}b\,\Delta_{k}\Big(\frac{b^\theta}{r}\Big)$ can be expanded under the form
\begin{align}\label{tbdecomp}
\nonumber S_{k-1}b^1(x)\Delta_{k}\Big(\frac{b^\theta}{r}\Big)&=-S_{k-1}b^1(x)\Delta_{k}\Big(\frac{b^1}{x_2}\Big)\\ \nonumber &=-x_2S_{k-1}\Big(\frac{b^1(x)}{x_2}\Big)\Delta_{k}\Big(\frac{b^1}{x_2}\Big)+x_2\Big[S_{k-1},\frac{1}{x_2}\Big]b^1(x)\Delta_{k}\Big(\frac{b^1}{x_2}\Big)\\
\nonumber &=-S_{k-1}\Big(\frac{b^1(x)}{x_2}\Big)\Delta_{k}b^1-x_2S_{k-1}\Big(\frac{b^1(x)}{x_2}\Big)\Big[\Delta_{k},\frac{1}{x_{2}}\Big]b^1\\ 
\nonumber&+x_2\Big[S_{k-1},\frac{1}{x_2}\Big]b^1(x)\Delta_{k}\Big(\frac{b^1}{x_2}\Big)\\
&:=-\textnormal{I}_{k}-\textnormal{II}_{k}+\textnormal{III}_{k}.
\end{align}
For the first term, we immediately get 
\begin{eqnarray}\label{hz80}
\Big\| \Big(2^{ks}\Vert\textnormal{I}_{k}\Vert_{{L}^1_tL^2}\Big)_k\Big\|_{\ell^2}
&\lesssim& \Big\Vert\frac{b^1}{x_2} \Big\Vert_{{L}^\infty_t L^\infty}\Vert b^1\Vert_{\widetilde{L}^1_t H^s} .
\end{eqnarray}
For the commutator term $\textnormal{II}_{k}$, we write by definition
\begin{eqnarray} \label{ii2}
\nonumber\textnormal{II}_{k}
 &=& S_{k-1}\Big(\frac{b^1(x)}{x_2}\Big)x_2\bigg(\Delta_{k}\Big(\frac{b^1(x)}{x_2}\Big)-\frac{\Delta_{k}b^1}{x_2}\bigg)\\
\nonumber&=& S_{k-1}\Big(\frac{b^1(x)}{x_2}\Big) 2^{3k}\int_{\R^3}h(2^{k}(x-y))(x_2-y_2) \frac{b^1}{y_2}(y) dy\\
&=& 2^{-k}S_{k-1}\Big(\frac{b^1(x)}{x_2}\Big)\Big(2^{3k}\widetilde{h}(2^{k}.)\ast\Big(\frac{b^1}{y_2} \Big)(x)\Big),
\end{eqnarray}
where $\widetilde{h}(x)=x_2 h(x).$
Since $\mathcal{F}(\widetilde{h}(\xi))=i\partial_{\xi_{2}}\mathcal{F}(h(\xi))=i\partial_{\xi_{2}}\varphi(\xi).$ Then it follows that $\textnormal{Supp}\;\mathcal{F}(\widetilde{h})\subset \textnormal{Supp}\,\mathcal{F}(h)=\textnormal{Supp}\,\varphi.$ Therefore for every $f \in \mathcal{S}^\prime$ we have $$2^{3k}\widetilde{h}(2^{k}.)\ast\Delta_{j}f=0\;,\;\textnormal{for}\;\vert k-j\vert\ge2.$$ This leads to
\be\label{ii22}
2^{3k}\widetilde{h}(2^{k}.)\ast f=\sum_{\vert k-j \vert \le 1}2^{3k}\widetilde{h}(2^{k}.)\ast \Delta_{j} f.
\ee
Hence by Young inequality for convolution 
we get
\begin{align*}
\Vert\textnormal{II}_{k}\Vert_{{L}^1_tL^2}&\lesssim 2^{-k}\Big\Vert S_{k-1}\Big(\frac{b^1(x)}{x_2}\Big)\Big\Vert_{{L}^\infty_tL^\infty}\sum_{\vert k-j \vert \le 1}\Big\Vert 2^{3k}\widetilde{h}(2^{k}.)\ast\Delta_{j}\Big(\frac{b^1}{x_2}\Big)\Big\Vert_{{L}^1_tL^2}
 \\
&\lesssim 2^{-k}\Vert\widetilde{h}\Vert_{L^1} \Big\Vert \frac{b^1}{x_2} \Big\Vert_{{L}^\infty_tL^\infty} \sum_{\vert k-j \vert \le 1}\Big\Vert\Delta_{j}\Big(\frac{b^1}{x_2}\Big)\Big\Vert_{L^1_tL^2}.
\end{align*}
It follows that 
\begin{eqnarray}\label{hz81}
 \nonumber \Big\| \Big(2^{ks}\Vert\textnormal{II}_{k}\Vert_{{L}^1_tL^2}\Big)_k\Big\|_{\ell^2}&\lesssim& \Big\Vert \frac{b^1}{x_2} \Big\Vert_{{L}^\infty_tL^\infty} \Big\| \Big(2^{k(s-1)} \sum_{\vert k-j \vert \le 1}\Big\Vert \Delta_{j}\Big(\frac{b^1}{x_2}\Big)\Big\Vert_{{L}^1_t L^2}\Big)_k\Big\|_{\ell^2}\\
\nonumber &\lesssim& \Big\Vert \frac{b^1}{x_2} \Big\Vert_{{L}^\infty_t L^\infty}\Big\| \Big( \sum_{\vert k-j \vert \le 1}2^{(k-j)(s-1)}2^{j(s-1)}\Big\Vert \Delta_{j}\Big(\frac{b^1}{x_2}\Big)\Big\Vert_{{L}^1_t L^2}\Big)_k\Big\|_{\ell^2}\\
&\lesssim& \Big\Vert \frac{b^1}{x_2}\Big\Vert_{{L}^\infty_tL^\infty}\Big\Vert\frac{b^1}{x_2}\Big\Vert_{\widetilde{L}^1_t H^{s-1}}. 
\end{eqnarray}
As for  the term $\textnormal{III}_{k}$, we write by definition
\begin{eqnarray} \label{iii3}
\nonumber\textnormal{III}_{k}&=&\Delta_{k}\Big(\frac{b^1(x)}{x_2}\Big)x_2\bigg(S_{k-1}\Big(\frac{b^1(x)}{x_2}\Big)-\frac{S_{k-1}b^{1}(x)}{x_2}\bigg)\\
\nonumber &=& \Delta_{k}\Big(\frac{b^1(x)}{x_2}\Big) 2^{3(k-1)}\int_{\R^3}g(2^{(k-1)}(x-y))(x_2-y_2) \frac{b^1}{y_2}(y) dy\\
&=& 2^{-k+1}\Delta_{k}\Big(\frac{b^1(x)}{x_2}\Big) \bigg(2^{3(k-1)}\widetilde{g}(2^{k-1}.)\ast\Big(\frac{b^1}{x_2} \Big)(x)\bigg),
\end{eqnarray}
where $\widetilde{g}(x)=x_2 g(x).$
By Young inequality for convolution 
we get
\begin{eqnarray}\label{hz82}
\nonumber \Vert\textnormal{III}_{k}\Vert_{{L}^1_t L^2}&\lesssim& 2^{-k}\Big\Vert \Delta_{k}\Big(\frac{b^1}{x_2}\Big)\Big\Vert_{{L}^1_t L^2}\Big\Vert 2^{3k}\widetilde{g}(2^{k}.)\ast\Big(\frac{b^1}{x_2}\Big)\Big\Vert_{{L}^\infty_t L^\infty}\\
\nonumber &\lesssim& 2^{-k}\Vert\widetilde{g}\Vert_{L^1} \Big\Vert \frac{b^1}{x_2}\Big\Vert_{{L}^\infty_t L^\infty}2^{-k}\Big\Vert\Delta_{k}\Big(\frac{b^1}{x_2}\Big)\Big\Vert_{{L}^1_t L^2}.
\end{eqnarray}
This yields 
\begin{eqnarray}\label{hz82}
\nonumber   \Big\| \Big(2^{ks}\Vert\textnormal{III}_{k}\Vert_{{L}^1_t L^2}\Big)_k\Big\|_{\ell^2}&\lesssim& \Big\Vert \frac{b^1}{x_2}\Big\Vert_{{L}^\infty_t L^\infty} \Big\| \Big(2^{k(s-1)}\Big\Vert\Delta_{k}\Big(\frac{b^1}{x_2}\Big)\Big\Vert_{{L}^1_t L^2}\Big)_k\Big\|_{\ell^2}\\
&\lesssim& \Big\Vert \frac{b^1}{x_2} \Big\Vert_{{L}^\infty_t L^\infty}\Big\Vert\frac{b^1}{x_2}\Big\Vert_{\widetilde{L}^1_t H^{s-1}}. 
\end{eqnarray}
Thus, from  \eqref{tbdecomp} \eqref{hz80}, \eqref{hz81} and \eqref{hz82} we deduce   that
\begin{eqnarray}\label{1st-part}
\Big\| \Big(2^{ks}\Big\Vert S_{k-1}b^1\,\Delta_{k}\Big(\frac{b^\theta}{r}\Big)\Big\Vert_{{L}^1_t L^2}\Big)_k\Big\|_{\ell^2}&\lesssim&  \Big\Vert \frac{b^1}{x_2} \Big\Vert_{{L}^\infty _t L^\infty}\bigg(\Big\Vert\frac{b^1}{x_2}\Big\Vert_{\widetilde{L}^1_t H^{s-1}}+\Vert b^1\Vert_{\widetilde{L}^1_t H^s}\bigg).
\end{eqnarray}
Since $b$ is axisymmetric then $b^{1}(x_1,0,z)=0.$ Therefore from Taylor formula,
\begin{equation*}
 b^1(x_1, x_2, z)=x_{2}\int_{0}^{1}\Big(\partial_{x_2}  b^1\Big)( x_1, \tau  x_2, z) d\tau.
\end{equation*}
It follows that
\begin{align*}
\Big\Vert \frac{ b^1(x)}{x_2}\Big\Vert_{\widetilde{L}^1_t  H^{s-1}}
&\lesssim \int_0^1\Vert (\partial_{x_2}  b^1)(.,\tau,.) \Vert_{\widetilde{L}^1_t  H^{s-1}}d\tau\\
& \lesssim \Vert \partial_{x_2}  b^1 \Vert_{\widetilde{L}^1_t  H^{s-1}}\int_0^1\tau^{s-\frac52}d\tau
\\ & \lesssim \Vert b^1 \Vert_{\widetilde{L}^1_t  H^{s}}.  
\end{align*}
Inserting the last estimate into \eqref{1st-part} yields
\begin{eqnarray}\label{z77}
 \Big\| \Big(2^{ks}\Big\Vert S_{k-1}b^1\,\Delta_{k}\Big(\frac{b^\theta}{r}\Big)\Big\Vert_{{L}^1_t L^2}\Big)_k\Big\|_{\ell^2}&\lesssim&  \Big\Vert \frac{b^1}{x_2} \Big\Vert_{{L}^\infty_t L^\infty}\Vert b^1 \Vert_{\widetilde{L}^1_t  H^s}. 
\end{eqnarray}
In a similar way we can show that
\begin{eqnarray}\label{hz9}
\Big\| \Big(2^{ks}\Big\Vert S_{k-1}b^2\,\Delta_{k}\Big(\frac{b^\theta}{r}\Big)\Big\Vert_{{L}^1_t  L^2}\Big)_k\Big\|_{\ell^2}&\lesssim&  \Big\Vert \frac{b^2}{x_1} \Big\Vert_{{L}^\infty_t  L^\infty}\Vert b^2\Vert_{\widetilde{L}^1_t  H^s}.
\end{eqnarray}
Combining \eqref{z77}, \eqref{hz9}  and  \eqref{hs1} we conclude  that
\begin{eqnarray}\label{hz10}
 \Big\Vert T_{b}\frac{b^\theta}{r}\Big\Vert_{\widetilde{L}^1_t H^s}&\lesssim&\Big\Vert \frac{b^\theta}{r} \Big\Vert_{{L}^\infty_t L^\infty}\Vert b\Vert_{\widetilde{L}^1_t H^s}.
\end{eqnarray}
Now putting  together \eqref{hs1}, \eqref{hz4}, \eqref{hz10} and \eqref{hz2} we find
\begin{equation*}
\Big\Vert \frac{b^\theta}{r}b\Big\Vert_{\widetilde{L}^1_t H^s}\lesssim \Big\Vert \frac{b^\theta}{r} \Big\Vert_{{L}^\infty_t L^\infty}\Vert b\Vert_{\widetilde{L}^1_t H^s}. 
\end{equation*}
This concludes the proof of (i).

\smallskip

To prove (ii) we use \eqref{hz2} to write
\begin{eqnarray}\label{z22b}
\Big\Vert\frac{b^\theta}{r} b\Big\Vert_{B^{\frac3p+1}_{p,1}}&\le& \big\Vert T_{\frac{b^\theta}{r}}b\big\Vert_{B^{\frac3p+1}_{p,1}}+\Big\Vert T_{b}\frac{b^\theta}{r}\Big\Vert_{B^{\frac3p+1}_{p,1}}+ \Big\Vert \mathcal{R}\Big(\frac{b^\theta}{r},b\Big)\Big\Vert_{B^{\frac3p+1}_{p,1}}.
\end{eqnarray}
In view of  \eqref{tgood} and by H\"older and Bernstein inequalities  we get
\begin{eqnarray}\label{s1b}
\nonumber\big\Vert T_{\frac{b^\theta}{r}}b\big\Vert_{B^{\frac3p+1}_{p,1}}&\lesssim &\sum_{q\in \mathbb{N}}2^{q(\frac3p+1)}\Big\Vert S_{q-1}\frac{b^\theta}{r}\Delta_{q}b\Big\Vert_{L^p}\\
&\lesssim& \Big\Vert \frac{b^\theta}{r}\Big\Vert_{L^\infty} \Vert b\Vert_{B^{\frac3p+1}_{p,1}}.
\end{eqnarray}
For the remainder term, by \eqref{gr},  we have
\begin{eqnarray}\label{z4b}
\nonumber \Big\Vert\mathcal{R}\Big(b,\frac{b^\theta}{r}\Big)\Big\Vert_{B^{\frac3p+1}_{p,1}}&\lesssim& \sum_{j\in\mathbb{N}}2^{j(\frac3p+1)}\sum_{k\ge j-4}\Vert\Delta_{k}b\Vert_{L^\infty}\Big\Vert\widetilde{\Delta}_{k}\Big(\frac{b^\theta}{r}\Big)\Big\Vert_{L^{\infty}}\\
\nonumber &\lesssim& \Big\Vert\frac{b^\theta}{r}\Big\Vert_{L^{\infty}}\sum_{k}2^{k(\frac3p+1)}\Vert\Delta_{k}b\Vert_{L^\infty}\sum_{j\le k+4}2^{(j-k)(\frac3p+1)}\\
 &\lesssim& \Big\Vert\frac{b^\theta}{r}\Big\Vert_{L^{\infty}}\Vert b \Vert_{B^{\frac3p+1}_{p,1}}
\end{eqnarray}
To estimate the term $\Big\Vert T_{b}\frac{b^\theta}{r}\Big\Vert_{B^{1}_{\infty,1}},$ we write by definition
\begin{equation}\label{tb-bes}
\Big\Vert T_{b}\frac{b^\theta}{r}\Big\Vert_{B^{1}_{\infty,1}}\lesssim \sum_{k \ge 0} 2^{k}\Big\Vert S_{k-1}b(x)\Delta_{k}\Big(\frac{b^\theta}{r}\Big)\Big\Vert_{L^{\infty}}.
\end{equation}
In view of \eqref{tbdecomp},  \eqref{ii2}, \eqref{ii22} and \eqref{iii3} the term $S_{k-1}b^1(x)\Delta_{k}\Big(\frac{b^\theta}{r}\Big)$ can be decomposed as follow,
\begin{align}\label{bt}
S_{k-1}b^1(x)\Delta_{k}\Big(\frac{b^\theta}{r}\Big)&=
-\textnormal{I}_{k}-\textnormal{II}_{k}+\textnormal{III}_{k},
\end{align}
with
\begin{align*}
\textnormal{I}_{k}&=S_{k-1}\Big(\frac{b^1(x)}{x_2}\Big)\Delta_{k}b^1,\\
\textnormal{II}_{k}&=2^{-k}S_{k-1}\Big(\frac{b^1(x)}{x_2}\Big)\sum_{\vert k-q \vert \le 1}\Big(2^{3k}\widetilde{h}(2^{k}.)\ast \Delta_q\Big(\frac{b^1}{y_2} \Big)(x)\Big),\\
\textnormal{III}_{k}&=2^{-k+1}\Delta_{k}\Big(\frac{b^1(x)}{x_2}\Big) \bigg(2^{3(k-1)}\widetilde{g}(2^{k-1}.)\ast\Big(\frac{b^1}{x_2} \Big)(x)\bigg).
\end{align*}
where $\widetilde{h}(x)=x_2 h(x)$ and $\widetilde{g}(x)=x_2 g(x).$
For the first term $\textnormal{I}_{k}$, we   have
\begin{eqnarray}\label{z80}
\nonumber \sum_{k \ge 0}2^{k(\frac3p+1)}\Vert\textnormal{I}_{k}\Vert_{L^p}&\lesssim& \Big\Vert\frac{b^1}{x_2} \Big\Vert_{L^\infty}\sum_{k \ge 0}2^{k}\Vert\Delta_{k}b^{1}\Vert_{L^p}\\
&\lesssim& \Big\Vert\frac{b^1}{x_2} \Big\Vert_{L^\infty}\Vert b^1\Vert_{B^{\frac3p+1}_{p,1}} 
\end{eqnarray}
For the second  $\textnormal{II}_{k}$ and the third term $\textnormal{III}_{k}$,  we shall make use of Young inequality for convolution,   
\begin{eqnarray}\label{z81}
\nonumber \sum_{k \ge 0}2^{k(\frac3p+1)}\Vert\textnormal{II}_{k}\Vert_{L^p}
\nonumber &\lesssim& \Big\Vert \frac{b^1}{x_2} \Big\Vert_{L^\infty}\sum_{\vert k-q\vert \le 1}2^{q\frac3p}\Vert\widetilde{h}\Vert_{L^1}\Big\Vert\Delta_{q}\Big(\frac{b^1}{x_2}\Big)\Big\Vert_{L^p}\\
&\lesssim& \Big\Vert \frac{b^1}{x_2}\Big\Vert_{L^\infty}\Big\Vert\frac{b^1}{x_2}\Big\Vert_{B^{\frac3p}_{p,1}},
\end{eqnarray}
\begin{eqnarray}\label{z82}
\nonumber \sum_{k \ge 0}2^{k(\frac3p+1)}\Vert\textnormal{III}_{k}\Vert_{L^p}
\nonumber &\lesssim& \Vert\widetilde{g}\Vert_{L^1} \Big\Vert \frac{b^1}{x_2}\Big\Vert_{L^\infty}\sum_{k\geq 0}2^{k\frac3p}\Big\Vert\Delta_{k}\Big(\frac{b^1}{x_2}\Big)\Big\Vert_{L^p}\\
&\lesssim& \Big\Vert \frac{b^1}{x_2} \Big\Vert_{L^\infty}\Big\Vert\frac{b^1}{x_2}\Big\Vert_{B^{\frac3p}_{p,1}}. 
\end{eqnarray}
Thus, it follows from \eqref{z80}, \eqref{z81} and \eqref{z82}  that
\begin{eqnarray}
\nonumber \sum_{k \ge 0} 2^{k(\frac3p+1)}\Big\Vert S_{k-1}b^1(x)\Delta_{k}\Big(\frac{b^\theta}{r}\Big)\Big\Vert_{L^{p}}&\lesssim&  \Big\Vert \frac{b^1}{x_2} \Big\Vert_{L^\infty}\bigg(\Big\Vert\frac{b^1}{x_2}\Big\Vert_{B^{\frac3p}_{p,1}}+\Vert b^1\Vert_{B^{\frac3p+1}_{p,1}}\bigg).
\end{eqnarray}
Since $b$ is axisymmetric then $b^{1}(x_1,0,z)=0.$ Therefore from Taylor formula,
\begin{equation*}
 b^1(x_1, x_2, z)=x_{2}\int_{0}^{1}\Big(\partial_{x_2}  b^1\Big)( x_1, \tau  x_2, z) d\tau.
\end{equation*}
For $1\leq p<\infty$ one has
\begin{align*}
\Big\Vert \frac{ b^1(x)}{x_2}\Big\Vert_{B^{\frac3p}_{p,1}}
& \lesssim \Vert \partial_{x_2}  b^1 \Vert_{B^{\frac3p}_{p,1}}
\\ & \lesssim \Vert b^1 \Vert_{B^1_{\infty,1}}.  
\end{align*}
For $p=\infty$ we use  the estimate \eqref{prop-a5},
\begin{align*}
\Big\Vert \frac{ b^1(x)}{x_2}\Big\Vert_{B^0_{\infty,1}}
& \lesssim \Vert \partial_{x_2}  b^1 \Vert_{B^0_{\infty,1}}\int_0^1(1-\log \tau)d\tau
\\ & \lesssim \Vert b^1 \Vert_{B^1_{\infty,1}}.  
\end{align*}
Consequently, 
\begin{eqnarray}\label{z7}
 \sum_{k \ge 0} 2^{k(\frac3p+1)}\Big\Vert S_{k-1}b^1(x)\Delta_{k}\Big(\frac{b^\theta}{r}\Big)\Big\Vert_{L^{p}}&\lesssim&  \Big\Vert \frac{b^1}{x_2} \Big\Vert_{L^\infty}\Vert b^1 \Vert_{B^{\frac3p+1}_{p,1}}. 
\end{eqnarray}
In a similar way we can prove the same estimate for the second component,
\begin{eqnarray}\label{z9}
\sum_{k \ge 0} 2^{k(\frac3p+1)}\Big\Vert S_{k-1}b^2(x)\Delta_{k}\Big(\frac{b^\theta}{r}\Big)\Big\Vert_{L^{p}}&\lesssim&  \Big\Vert \frac{b^2}{x_1} \Big\Vert_{L^\infty}\Vert b^2 \Vert_{B^{\frac3p+1}_{p,1}}.
\end{eqnarray}
It follows from \eqref{tb-bes},  \eqref{z7} and \eqref{z9} that
\begin{eqnarray}\label{z10b}
 \Big\Vert T_{b}\frac{b^\theta}{r}\Big\Vert_{B^{\frac3p+1}_{p,1}}&\lesssim&\Big\Vert \frac{b^\theta}{r} \Big\Vert_{L^\infty}\Vert b \Vert_{B^{\frac3p+1}_{p,1}}.
\end{eqnarray}
Now putting  together \eqref{z22b}, \eqref{s1b}, \eqref{z4b} and \eqref{z10b} we find
\begin{equation*}
\Big\Vert \frac{b^\theta}{r}b\Big\Vert_{B^{\frac3p+1}_{p,1}}\lesssim \Big\Vert \frac{b^\theta}{r} \Big\Vert_{L^\infty}\Vert b \Vert_{B^{\frac3p+1}_{p,1}}. 
\end{equation*}
This ends the proof of the proposition.
\end{proof}

\vspace{0.2cm}

{\bf Acknowledgments}
The author would like to thank T. Hmidi for the fruitful discussions about the critical case.


\Addresses

\end{document}